\newtheorem{theorem}{Theorem}[section]
\newtheorem{corollary}[theorem]{Corollary}
\newtheorem{lemma}[theorem]{Lemma}
\newtheorem{proposition}[theorem]{Proposition}
\theoremstyle{definition}
\newtheorem{definition}[theorem]{Definition}
\newtheorem{remark}[theorem]{Remark}
\newtheorem{example}[theorem]{Example}
\numberwithin{equation}{section}
\newcommand{\add}{\mathsf{add}\hspace{.01in}}
\renewcommand{\mod}{\mathsf{mod}\hspace{.01in}}
\newcommand{\cok}{\operatorname{Cok}\nolimits}
\newcommand{\cocone}{\operatorname{Cocone}\nolimits}
\newcommand{\cone}{\operatorname{Cone}\nolimits}
\newcommand{\End}{\operatorname{End}\nolimits}
\newcommand{\Ext}{\operatorname{Ext}\nolimits}
\newcommand{\Hom}{\operatorname{Hom}\nolimits}
\newcommand{\id}{\operatorname{id}\nolimits}
\renewcommand{\ker}{\operatorname{Ker}\nolimits}
\newcommand{\op}{\operatorname{op}\nolimits}
\newcommand{\pd}{\operatorname{pd}\nolimits}
\newcommand{\rad}{\operatorname{rad}\nolimits}
\newcommand{\soc}{\operatorname{soc}\nolimits}
\renewcommand{\top}{\operatorname{top}\nolimits}
\begin{document}
\title[Mixed standardization and Ringel duality]{Mixed standardization and Ringel duality}
\author{Takahide Adachi}\address{T.~Adachi: Faculty of Global and Science Studies, Yamaguchi University, 1677-1 Yoshida, Yamaguchi 753-8541, Japan}\email{tadachi@yamaguchi-u.ac.jp}\thanks{T.~Adachi is supported by JSPS KAKENHI Grant Number JP20K14291.}
\author{Mayu Tsukamoto}\address{M.~Tsukamoto: Graduate school of Sciences and Technology for Innovation, Yamaguchi University, 1677-1 Yoshida, Yamaguchi 753-8512, Japan}\email{tsukamot@yamaguchi-u.ac.jp}\thanks{M.~Tsukamoto is supported by JSPS KAKENHI Grant Number JP19K14513.}
\subjclass[2020]{Primary 16G10, Secondly 16S50}
\keywords{Stratified algebras, Standardization, Ringel duality}

\begin{abstract}
Dlab--Ringel's standardization method gives a realization of a standardly stratified algebra.
In this paper, we construct mixed stratified algebras, which are a generalization of standardly stratified algebras, following Dlab--Ringel's standardization method.
Moreover, we study a Ringel duality of mixed stratified algebras from the viewpoint of stratifying systems.
\end{abstract}
\maketitle

\section{Introduction}
The notion of quasi-hereditary algebras was introduced by Cline, Parshall and Scott \cite{CPS88} and has its origin in the representation theories of complex Lie algebras and algebraic groups.
From the viewpoint of the representation theory of finite dimensional algebras, Dlab and Ringel intensively studied quasi-hereditary algebras (for example \cite{DR89, DR92, R91, D96}).
Moreover, Iyama \cite{I03} proved that each finite dimensional algebra has a finite representation dimension by using a property of quasi-hereditary algebras.
In the theory of quasi-hereditary algebras, standard modules play a crucial role.
By focusing on standard modules, standardly stratified algebras are defined as a natural generalization of quasi-hereditary algebras \cite{CPS96, D96}.

Dlab and Ringel \cite{DR92} introduced the notion of a standardizable set, which behaves like a set of standard modules, and gave a realization of a quasi-hereditary algebra as the endomorphism algebra of an (Ext-)projective object of the smallest extension-closed subcategory $\mathcal{F}(\Theta)$ containing a standardizable set $\Theta$.
Moreover, Erdmann and S\'aenz \cite{ES03} generalized this result to  standardly stratified algebras.
Let $\Theta:=(\Theta(1), \Theta(2), \ldots, \Theta(t))$ be an ordered set of objects in an abelian category $\mathcal{A}$.
Then $\Theta$ is called a \emph{standardizable set} if it satisfies that (S1) $\Theta(i)$ is a stone for each $i \in [1,t]$, (S2)  $\mathcal{A}(\Theta(i), \Theta(j))=0$ for each $i>j$ and (S3)  $\Ext_{\mathcal{A}}^{1}(\Theta(i), \Theta(j))=0$ for each $i>j$.

\begin{theorem}[Dlab--Ringel's standardization method] \label{intro_thm1}
Let $\mathcal{A}$ be a Krull--Schmidt Ext-finite abelian category.
Let $\Theta:=(\Theta(1), \Theta(2), \ldots, \Theta(t))$ be an ordered set of objects in $\mathcal{A}$.
Then the following statements hold.
\begin{itemize}
\item[(1)] If $\Theta$ is a standardizable set, then there exists a projective object $\mathbb{P}:=(P(1), P(2), \ldots, P(t))$ in $\mathcal{F}(\Theta)$ such that the pair $(\Theta, \mathbb{P})$ is a stratifying system in $\mathcal{A}$, that is, $\Theta$ satisfies (S1) and (S2), and there exists an exact sequence
\begin{align}
0 \to K(i) \to P(i)\to \Theta(i) \to 0  \notag
\end{align}
such that $K(i)\in\mathcal{F}(\Theta(i+1),\Theta(i+2),\ldots,\Theta(t))$ and $P(i)$ is an indecomposable projective object in  $\mathcal{F}(\Theta)$ for each $i\in[1,t]$.
\item[(2)] Assume that $\mathcal{A}$ is Hom-finite.
Let $(\Theta,\mathbb{P}')$ be a stratifying system in $\mathcal{A}$ and $\Psi:=\mathcal{A}(\mathbb{P}', -)$.
Then the endomorphism algebra $\End_{\mathcal{A}}(\mathbb{P}')$ is a standardly stratified algebra with respect to standard modules $(\Psi (\Theta(1)), \Psi(\Theta(2)), \ldots, \Psi(\Theta(t)))$ and $\mathcal{F}(\Theta)$ is equivalent to $\mathcal{F}(\Psi(\Theta))$.
\end{itemize}
\end{theorem}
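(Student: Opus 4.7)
I plan a downward induction on $i$ via iterated universal extensions in Dlab--Ringel's style. Set $P(t):=\Theta(t)$. For $i<t$, begin with $E_0:=\Theta(i)$ and process $m=1,\ldots,t-i$ in \emph{increasing} order: at step $m$ form the universal extension
\begin{equation*}
0\to\Theta(i+m)^{n_m}\to E_m\to E_{m-1}\to 0,\qquad n_m:=\dim_k\Ext^1_{\mathcal{A}}(E_{m-1},\Theta(i+m)),
\end{equation*}
then put $P(i):=E_{t-i}$, whose kernel $K(i)$ over $\Theta(i)$ lies in $\mathcal{F}(\Theta(i+1),\ldots,\Theta(t))$ by construction. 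The increasing order is decisive: for $k<m$, applying $\Hom_{\mathcal{A}}(-,\Theta(i+k))$ sandwiches $\Ext^1_{\mathcal{A}}(E_m,\Theta(i+k))$ between $\Ext^1_{\mathcal{A}}(E_{m-1},\Theta(i+k))=0$ (induction) and $\Ext^1_{\mathcal{A}}(\Theta(i+m),\Theta(i+k))=0$ (by (S3), since $i+m>i+k$). Thus $\Ext^1_{\mathcal{A}}(P(i),\Theta(j))=0$ for $j\in[i,t]$ (with (S1) handling $j=i$); the case $j<i$ follows from (S3) applied factor-by-factor to $P(i)$'s filtration, so $P(i)$ is projective in $\mathcal{F}(\Theta)$. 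For indecomposability, (S2) gives $\Hom_{\mathcal{A}}(K(i),\Theta(i))=0$, whence $\Hom_{\mathcal{A}}(P(i),\Theta(i))\cong\End_{\mathcal{A}}(\Theta(i))$ is a skew-field by (S1), and a standard Krull--Schmidt argument then forces $\End_{\mathcal{A}}(P(i))$ to be local.

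\textbf{Part (2).} Write $B:=\End_{\mathcal{A}}(\mathbb{P}')$; since $\mathbb{P}'$ is projective in $\mathcal{F}(\Theta)$, the functor $\Psi$ is exact on $\mathcal{F}(\Theta)$, and the indecomposable summands of $\mathbb{P}'$ map to a complete set of indecomposable projective $B$-modules. The core of the proof is that $\Psi|_{\mathcal{F}(\Theta)}\colon\mathcal{F}(\Theta)\to\mathcal{F}(\Psi(\Theta))$ is a fully faithful exact equivalence. Full faithfulness on $\add\mathbb{P}'$ is tautological; for general $X\in\mathcal{F}(\Theta)$ I would construct, by induction on $\Theta$-filtration length via horseshoe-style splicing of the defining sequences, a presentation $P_1\to P_0\to X\to 0$ with $P_0,P_1\in\add\mathbb{P}'$ and intermediate kernel in $\mathcal{F}(\Theta)$, then a five-lemma chase on the comparison diagram between $\Hom_{\mathcal{A}}(P_\bullet,Y)$ and $\Hom_B(\Psi P_\bullet,\Psi Y)$ yields full faithfulness on all of $\mathcal{F}(\Theta)$. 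Applying $\Psi$ to the defining sequences $0\to K(i)\to P'(i)\to\Theta(i)\to 0$ then exhibits $B$ as standardly stratified with standard modules $\Psi(\Theta(i))$: each $\Psi(P'(i))$ is an indecomposable projective $B$-module with quotient $\Psi(\Theta(i))$ and kernel in $\mathcal{F}(\Psi(\Theta(i+1)),\ldots,\Psi(\Theta(t)))$. The equivalence $\mathcal{F}(\Theta)\simeq\mathcal{F}(\Psi(\Theta))$ drops out: the essential image of the fully faithful exact $\Psi|_{\mathcal{F}(\Theta)}$ is extension-closed and contains each $\Psi(\Theta(i))$, hence coincides with $\mathcal{F}(\Psi(\Theta))$.

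\textbf{Main obstacle.} The most delicate step is the bookkeeping in Part (1): verifying that the increasing-index ordering of universal extensions simultaneously annihilates every $\Ext^1_{\mathcal{A}}(P(i),\Theta(j))$ without later steps reintroducing previously-killed obstructions---this is exactly where hypothesis (S3) enters. In Part (2), the nontrivial ingredient is the full faithfulness of $\Psi|_{\mathcal{F}(\Theta)}$, which reduces to the dévissage producing $\add\mathbb{P}'$-presentations inside $\mathcal{F}(\Theta)$; once this is in place, the standardly-stratified structure on $B$ and the categorical equivalence follow essentially formally from exactness.
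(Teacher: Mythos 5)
Your route is the same as the paper's: part (1) is the iterated universal-extension construction processed in increasing order of the index (Lemma \ref{lem_brick-uniex}, Proposition \ref{prop_stone-univex} and Proposition \ref{prop_extproj} specialized to stones), and part (2) is the equivalence induced by $\Psi$ via $\add\mathbb{P}'$-presentations (Lemma \ref{lem_equiv} and Theorem \ref{mainthm2}). The genuine gap is the indecomposability of $P(i)$ in part (1). First, (S1) does not make $\End_{\mathcal{A}}(\Theta(i))$ a skew-field: a stone in a Krull--Schmidt category has only a \emph{local} endomorphism ring, possibly with nonzero radical. Second, and more seriously, the inference ``$\Hom_{\mathcal{A}}(P(i),\Theta(i))\cong\End_{\mathcal{A}}(\Theta(i))$ plus Krull--Schmidt forces $\End_{\mathcal{A}}(P(i))$ local'' fails unless the deflation $P(i)\to\Theta(i)$ is right minimal, and your $P(i):=E_{t-i}$ need not be: if some $\Theta(j)$ with $j>i$ is a stone whose endomorphism ring has a nonzero radical element $\epsilon$ acting nontrivially on $\Ext^1_{\mathcal{A}}(E_{m-1},\Theta(j))$ (say $\delta$ and $\epsilon_{*}\delta$ are part of the chosen $\Bbbk$-basis), then the split epimorphism $\left[\begin{smallmatrix}\epsilon & -1\end{smallmatrix}\right]:\Theta(j)^{\oplus 2}\to\Theta(j)$ annihilates the classifying class, so the $\Bbbk$-universal extension acquires a redundant direct summand isomorphic to $\Theta(j)$ mapping to zero in $E_{m-1}$. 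Such a summand is invisible to your criterion, since (S2) gives $\Hom_{\mathcal{A}}(\Theta(j),\Theta(i))=0$, so $\Hom_{\mathcal{A}}(P(i),\Theta(i))\cong\End_{\mathcal{A}}(\Theta(i))$ can hold with $P(i)$ decomposable. The repair is exactly what the paper does: pass to the right minimal part of the deflation (Remark \ref{rem_minmor}), keep the kernel in $\mathcal{F}(\Theta(>i))$ using closure under direct summands (Proposition \ref{prop_dirsummand}(2)), verify that $\mathcal{C}(\alpha_i,\Theta(i))$ is an isomorphism, and invoke Lemma \ref{lem_indec}, which needs only locality.

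In part (2) your outline is the paper's, but the two statements you assert are where the work lies: (i) extension-closedness of the essential image of $\Psi|_{\mathcal{F}(\Theta)}$ amounts to surjectivity of $\Ext^1_{\mathcal{A}}(M,N)\to\Ext^1_{B}(\Psi(M),\Psi(N))$, i.e.\ Lemma \ref{lem_equiv}(2) together with the density half of Lemma \ref{lem_equiv}(1), proved by lifting a $\Psi(\Theta)$-filtration step by step through projective presentations; and (ii) identifying $\Psi(\Theta(i))$ with the actual $i$-th standard module of $B$ does not follow merely from the filtration of $B$: one must check $\Hom_{\mathcal{A}}(P'(m),\Theta(i))=0$ for $m>i$ (so $\Psi(\Theta(i))$ has composition factors only among $S(1),\ldots,S(i)$) and that the top of $\Psi(K(i))$ involves only simples $S(l)$ with $l>i$ (maximality); this is Lemma \ref{lem_pstandard}(1). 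Both are fixable along the lines you sketch, but as written they are assertions rather than proofs.
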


Subsequently, Dlab--Ringel's standardization method was studied in a triangulated category \cite{MS16} and an exact category \cite{S19}.
As an analog of standardizable sets, Mendoza, Platzeck and Verdecchia \cite{MPV14} introduced a ``proper'' standardizable set, which behaves like a set of proper standard modules, that is, the condition (S1) is replaced with the condition that $\Theta(i)$ is a brick.
Under a certain condition, the counterpart of Theorem \ref{intro_thm1} holds.

As a further generalization of standardly stratified algebras, \'Agoston, Dlab and Luk\'acs \cite{ADL98}  introduced the notion of stratified algebras of type $\mathbf{s}$.
Moreover, this is a special class of $\varepsilon$-stratified algebras \cite{BS}.
To distinguish with some classes of stratified algebras, we call a stratified algebra of type $\mathbf{s}$ a \emph{mixed stratified algebra} in this paper.
Let $A$ be a finite dimensional algebra and $(e_{1}, e_{2}, \ldots, e_{n})$ the complete ordered set of primitive orthogonal idempotents.
Let $\Delta(i)$ (respectively, $\overline{\Delta}(i)$) denote the $i$-th standard (respectively, proper standard) module.
An algebra $A$ is called a \emph{mixed stratified algebra} if $A\in \mathcal{F}(\Xi(1),\Xi(2),\ldots, \Xi(n))$, where $\Xi(i) \in\{\Delta(i), \overline{\Delta}(i)\}$ for each $i\in[1,n]$.

Our aim of this paper is to study Dlab--Ringel's standardization method and Ringel duality for mixed stratified algebras.
We introduce the notion of mixed standardizable sets in an extriangulated category, which was introduced in \cite{NP19} as a simultaneous generalization of a triangulated category and an exact category.
Let $\mathcal{C}:=(\mathcal{C},\mathbb{E},\mathfrak{s}, \mathbb{E}^{-1})$ be an extriangulated category with a negative first extension.
Let $\Theta:=(\Theta(1), \Theta(2), \ldots, \Theta(t))$ be an ordered set of objects in $\mathcal{C}$.
Then $\Theta$ is called a \emph{mixed standardizable set} if it satisfies that (MS1) $\Theta(i)$ is a brick or a stone for each $i\in[1,t]$, (MS2) $\mathcal{C}(\Theta(i), \Theta(j))=0$ for each $i>j$, (MS3) $\mathbb{E}(\Theta(i), \Theta(j))=0$ for each $i>j$ and (MS4) $\mathbb{E}^{-1}(\Theta,\Theta)=0$.
Here, an object $M$ is called a \emph{stone} if it is indecomposable and $\mathbb{E}(M,M)=0$.
As a generalization of Theorem \ref{intro_thm1}, we have the following result.

\begin{theorem}[Theorems \ref{mainthm1} and \ref{mainthm2}]
Let $\mathcal{C}$ be a Krull--Schmidt $\mathbb{E}$-finite   extriangulated category with a negative first extension.
Let $\Theta:=(\Theta(1), \Theta(2), \ldots, \Theta(t))$ be an ordered set of objects in $\mathcal{C}$.
Then the following statements hold.
\begin{itemize}
\item[(1)] If $\Theta$ is a mixed standardizable set satisfying a certain finiteness condition in $\mathcal{F}(\Theta)$ (see Definition \ref{def_ms}(2)), then there exists a projective object $\mathbb{P}=(P(1),P(2),\ldots, P(t))$ in  $\mathcal{F}(\Theta)$ such that the pair $(\Theta, \mathbb{P})$ is a mixed stratifying system in $\mathcal{C}$, that is, $\Theta$ satisfies (MS1), (MS2) and (MS4), and there exists an $\mathfrak{s}$-conflation
\begin{align}
K(i) \to P(i) \to \Theta(i) \dashrightarrow \notag
\end{align}
such that $K(i) \in \mathcal{F}(\Theta(i),\Theta(i+1),\ldots,\Theta(t))$ and $P(i)$ is an indecomposable projective object in $\mathcal{F}(\Theta)$ for each $i\in[1,t]$.
\item[(2)] Assume that $\mathcal{C}$ is Hom-finite.
Let $(\Theta, \mathbb{P}')$ be a mixed stratifying system in $\mathcal{C}$ and $\Psi:=\mathcal{C}(\mathbb{P}', -)$.
Then the endomorphism algebra $\End_{\mathcal{C}}(\mathbb{P}')$ is a mixed stratified algebra with respect to $(\Psi (\Theta(1)), \Psi(\Theta(2)), \ldots, \Psi(\Theta(t)))$ and $\mathcal{F}(\Theta)$ is equivalent to $\mathcal{F}(\Psi(\Theta))$.
\end{itemize}
\end{theorem}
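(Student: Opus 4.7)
The strategy for Part~(1) is to construct $\mathbb{P}=(P(1),\dots,P(t))$ by downward induction on $i$ via iterated universal $\mathfrak{s}$-conflations. When $\Theta(i)$ is a stone one starts from $\Theta(i)$; when $\Theta(i)$ is only a brick, one must already kill $\mathbb{E}(-,\Theta(i))$, and it is natural that the kernel $K(i)$ contains summands of $\Theta(i)$, which explains the shape $K(i)\in\mathcal{F}(\Theta(i),\Theta(i{+}1),\dots,\Theta(t))$. Inductively, assuming $P(i{+}1),\dots,P(t)$ have been built, I would form $P(i)$ by starting from $\Theta(i)$ and successively taking universal extensions against $\Theta(j)$ for $j\geq i$, using $\mathbb{E}(\Theta(i),\Theta(j))$ (and $\mathbb{E}(K,\Theta(j))$ at later stages) as the guiding Ext-basis. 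The hypothesis (MS4) that $\mathbb{E}^{-1}(\Theta,\Theta)=0$ is essential: via the long exact sequence relating $\mathbb{E}^{-1}$ and $\mathbb{E}$ attached to any conflation, vanishing of the relevant $\mathbb{E}^{-1}$ turns the pullback/pushout map into a surjection, so that each universal extension genuinely annihilates the targeted Ext group. Indecomposability of $P(i)$ then follows because $\Theta(i)$ survives as the top quotient and is a brick, so any idempotent of $\End_{\mathcal{C}}(P(i))$ descends to the division ring $\End_{\mathcal{C}}(\Theta(i))$. Projectivity of $\mathbb{P}$ in $\mathcal{F}(\Theta)$ is then verified by combining the vanishings for $j\geq i$ with (MS3) (which handles $j<i$ automatically).

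For Part~(2), I would transport the data $(\Theta,\mathbb{P}')$ through $\Psi=\mathcal{C}(\mathbb{P}',-)$. Projectivity of $\mathbb{P}'$ in $\mathcal{F}(\Theta)$ makes $\Psi$ exact on $\mathcal{F}(\Theta)$, sending $\mathfrak{s}$-conflations to short exact sequences of $B$-modules for $B=\End_{\mathcal{C}}(\mathbb{P}')$, and the conflations $K(i)\to P(i)\to\Theta(i)\dashrightarrow$ produce projective presentations of $\Psi(\Theta(i))$ that simultaneously exhibit $B\in\mathcal{F}(\Psi(\Theta))$. The equivalence $\mathcal{F}(\Theta)\simeq\mathcal{F}(\Psi(\Theta))$ follows from the standard principle that a projective generator of an extension-closed subcategory induces such an equivalence onto its image in $\mod B$: full faithfulness reduces to $\Psi(P(i))=\mathcal{C}(\mathbb{P}',P(i))$ being a direct summand of $B$ together with Hom-finiteness, while essential surjectivity uses that $\mathcal{F}(\Psi(\Theta))$ is generated under extensions by the $\Psi(\Theta(i))$ and that each such module lifts along $\Psi$ by exactness. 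Finally, comparing composition factors, (MS2) forces the radical of $\Psi(P(i))$ to have only composition factors of indices $\geq i$, and the brick/stone alternative on $\Theta(i)$ translates into whether $\Psi(\Theta(i))$ is the standard $\Delta(i)$ (stone case, top is simple and $\Theta(i)$ is not self-extended in $P(i)$) or the proper standard $\overline{\Delta}(i)$ (genuinely brick case); combined with $B\in\mathcal{F}(\Psi(\Theta))$ this is exactly the mixed stratified structure.

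The main obstacle I foresee is the termination of the universal extension procedure in Part~(1): the iterative construction enlarges the kernel at each step and must be forced to stabilize, which is precisely the role of the finiteness condition referenced in Definition~\ref{def_ms}(2); this condition must also be seen to persist through the induction. A second delicate point is the bookkeeping of the $\mathbb{E}^{-1}$--$\mathbb{E}$ long exact sequence: one must verify that cancelling $\mathbb{E}(-,\Theta(j))$ at a given stage does not reintroduce $\mathbb{E}(-,\Theta(k))$ for $k>j$, which is where (MS3) together with the orderly descending choice of index $j$ becomes crucial. For Part~(2), the technical core is confirming that $\Psi$ is genuinely an exact equivalence onto $\mathcal{F}(\Psi(\Theta))$, in particular that every $B$-module with a $\Psi(\Theta)$-filtration lifts back to an object of $\mathcal{F}(\Theta)$, and that the stone/brick dichotomy in $\mathcal{C}$ corresponds exactly to the standard/proper standard dichotomy in $\mod B$.
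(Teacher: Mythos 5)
Your overall strategy is the same as the paper's: build each $P(i)$ by iterating universal extensions against the $\Theta(j)$ (Lemma \ref{lem_brick-uniex} and Proposition \ref{prop_extproj}), with the finiteness condition of Definition \ref{def_ms}(2) guaranteeing termination of each single round, and then projectivize via $\Psi=\mathcal{C}(\mathbb{P}',-)$, identifying $\Psi(\Theta(i))$ with $\Delta(i)$ or $\overline{\Delta}(i)$ according to the stone/brick alternative. However, two points in your sketch are stated in a way that would not go through as written.

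First, the bookkeeping of indices is backwards. You propose an ``orderly descending choice of index $j$'' and worry about reintroducing $\mathbb{E}(-,\Theta(k))$ for $k>j$. In the paper one takes at each round the \emph{minimum} $j$ with $\mathbb{E}(-,\Theta(j))\neq 0$: the kernel added at that round lies in $\mathcal{F}(\Theta(j))$, and (MS3) gives $\mathbb{E}(\Theta(j),\Theta(l))=0$ only for $l<j$, so it is the \emph{smaller}, already-killed indices that stay killed, while higher indices are dealt with at later rounds; since the minimal offending index strictly increases, the process stops after at most $t$ rounds. With a descending choice the new kernel pieces in $\mathcal{F}(\Theta(j))$ have uncontrolled extensions against $\Theta(k)$ for $k>j$ (MS3 says nothing here), so previously treated indices can reappear and termination is lost. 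Second, the role you assign to (MS4) is misplaced, and your indecomposability argument is incomplete. (MS4) is not what makes a universal extension annihilate the targeted $\mathbb{E}$-group: for stones this is automatic (Proposition \ref{prop_stone-univex}), and for bricks it is exactly the finiteness hypothesis that is needed. Instead, (MS4) enters in showing that the map $\mathcal{C}(\alpha_i,\Theta(i))$ is an isomorphism, which combined with right minimality and locality of $\End_{\mathcal{C}}(\Theta(i))$ yields indecomposability of $P(i)$ (Lemma \ref{lem_indec}), and again in the left-exactness needed for full faithfulness of $\Psi$ in part (2). Your ``idempotents descend to the division ring $\End_{\mathcal{C}}(\Theta(i))$'' only covers the brick case; when $\Theta(i)$ is a stone that is not a brick one needs instead $\mathcal{C}(\mathcal{F}(\Theta(\ge j)),\Theta(i))=0$ from (MS2) (with $j>i$) together with locality of $\End_{\mathcal{C}}(\Theta(i))$ in the Krull--Schmidt category. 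Relatedly, in part (2) full faithfulness requires that $\mathbb{P}'$ is a projective \emph{generator} of $\mathcal{F}(\Theta)$ (Proposition \ref{prop_mss}(2)), so that arbitrary objects of $\mathcal{F}(\Theta)$, not just the $\Theta(i)$, admit presentations by $\add\mathbb{P}'$; this generator property needs its own filtration induction and is not a formal consequence of $\Psi(P(i))$ being a summand of $B$.
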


Brundan and Stroppel \cite{BS} gave Ringel duality of mixed stratified algebras. 
This is a generalization of \cite{R91, AHLU00, X02}. 
Let $A$ be a mixed stratified algebra with respect to $\Xi$.
Then $\mathcal{F}(\Xi)$ admits an injective cogenerator $\mathbb{T}$, which is a Wakamatsu tilting module but not necessarily tilting, and the enodomorphism algebra $C:=\End_{A}(\mathbb{T})^{\op}$ is also a mixed stratified algebra. Moreover, $\Phi(A)$ is an injective cogenerator in $\mathcal{F}(\Phi(\Xi))$ and $\End_{C}(\Phi(A))^{\op}\cong A$, where $\Phi:=\Hom_{A}(-,\mathbb{T})$.
Thus $C$ is called a \emph{Ringel dual} of $A$.
Focusing on the property that $(\Xi, A,\mathbb{T})$ is a mixed bistratifying system (see Definition \ref{def_mss}), we study Ringel duality of mixed stratified algebras from the viewpoint of mixed bistratifying systems.
The following theorem gives a framework of Ringel duality.

\begin{theorem}[Theorem \ref{thm_rdual}] \label{intro_thm2}
Let $(\Theta, \mathbb{P}, \mathbb{I})$ be a mixed bistratifying system in $\mathcal{C}$.
Then the following statements hold.
\begin{itemize}
\item[(1)] Let $C:=\End_{\mathcal{C}}(\mathbb{I})^{\op}$ and $\Phi:=\mathcal{C}(-,\mathbb{I})$. Then $(\Phi(\Theta), \Phi(\mathbb{I}), \Phi(\mathbb{P}))$ is a mixed bistratifying system in $\mod C$.
Moreover, $\Phi(\mathbb{P})$ is a Wakamatsu titling $C$-module and $(\Phi'\Phi(\Theta), \Phi'\Phi(\mathbb{P}), \Phi'\Phi(\mathbb{I})) \cong (\Psi(\Theta), \Psi(\mathbb{P}), \Psi(\mathbb{I}))$ holds, where $\Phi':=\Hom_{C}(-,\Phi(\mathbb{P}))$.
In particular, $\End_{\mathcal{C}}(\mathbb{P}) \cong \End_{C}(\Phi(\mathbb{P}))$.
\item[(2)] Let $B:=\End_{\mathcal{C}}(\mathbb{P})$ and $\Psi:=\mathcal{C}(\mathbb{P},-)$. 
Then $(\Psi(\Theta), \Psi(\mathbb{P}), \Psi(\mathbb{I}))$ is a mixed bistratifying system in $\mod B$.
Moreover, $\Psi(\mathbb{I})$ is a Wakamatsu titling $B$-module and $(\Psi'\Psi(\Theta),\Psi'\Psi(\mathbb{I}), \Psi'\Psi(\mathbb{P}))\cong (\Phi(\Theta),\Phi(\mathbb{I}), \Phi(\mathbb{P}))$ holds, where $\Psi':=\Hom_{B}(-,\Psi(\mathbb{I}))$.
In particular, $\End_{\mathcal{C}}(\mathbb{I}) \cong \End_{B}(\Psi(\mathbb{I}))$.
\end{itemize}
\end{theorem}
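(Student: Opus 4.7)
The plan is to deduce both parts directly from part (2) of the main standardization theorem and its contravariant analogue, applied in turn to the projective and injective halves of the bistratifying system. I will sketch part (2) in detail; part (1) proceeds analogously by applying the same argument in the opposite extriangulated category via the contravariant functor $\Phi$.

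For part (2), the main standardization theorem applied to the mixed stratifying system $(\Theta, \mathbb{P})$ yields that $B = \End_{\mathcal{C}}(\mathbb{P})$ is mixed stratified with respect to $\Psi(\Theta)$ and that $\Psi$ restricts to an equivalence $\mathcal{F}(\Theta) \simeq \mathcal{F}(\Psi(\Theta))$ sending $\mathbb{P}$ to the projective generator $B$. Since this equivalence preserves Ext-injective cogenerators, transporting the injective-side $\mathfrak{s}$-conflations $\Theta(i) \to I(i) \to L(i)$ through $\Psi$ verifies the injective-side axioms of a mixed bistratifying system for $\Psi(\mathbb{I})$. Thus $(\Psi(\Theta), \Psi(\mathbb{P}), \Psi(\mathbb{I}))$ is a mixed bistratifying system in $\mod B$, and the identity $\End_{B}(\Psi(\mathbb{I})) \cong \End_{\mathcal{C}}(\mathbb{I})$ is immediate from full faithfulness.

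To verify that $T := \Psi(\mathbb{I})$ is Wakamatsu tilting over $B$, the self-Ext vanishing $\Ext^{k}_{B}(T,T) = 0$ for all $k \geq 1$ follows from condition (MS4) together with the Ext-vanishing inside $\mathcal{F}(\Theta)$: the relevant extension groups can be computed using resolutions that stay in $\mathcal{F}(\Psi(\Theta))$ via the equivalence $\Psi$. The required coresolution $0 \to B \to T^{0} \to T^{1} \to \cdots$ with each $T^{i} \in \add T$ that remains exact under $\Hom_{B}(-, T)$ is built inductively by splicing the injective-side conflations $\Theta(i) \to I(i) \to L(i)$, transported via $\Psi$; each successive cokernel stays inside $\mathcal{F}(\Psi(\Theta))$ and hence remains Ext-orthogonal to $T$.

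Finally, the identification $(\Psi'\Psi(\Theta), \Psi'\Psi(\mathbb{I}), \Psi'\Psi(\mathbb{P})) \cong (\Phi(\Theta), \Phi(\mathbb{I}), \Phi(\mathbb{P}))$ follows from the natural isomorphism $\Psi'\Psi = \Hom_{B}(\Psi(-), \Psi(\mathbb{I})) \cong \mathcal{C}(-, \mathbb{I}) = \Phi$ on $\mathcal{F}(\Theta)$, which is valid because $\Psi$ is fully faithful there. I expect the main technical obstacle to be the construction and $\Hom_{B}(-, T)$-exactness of the infinite coresolution of $B$ by $\add T$, since one must ensure that every successive cokernel remains inside $\mathcal{F}(\Psi(\Theta))$ so that Ext-vanishing against $T$ applies at each stage. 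This is precisely where the full strength of the mixed bistratifying hypothesis, providing both projective and injective resolutions inside $\mathcal{F}(\Theta)$, is essential.
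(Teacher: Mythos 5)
Your proposal is correct and follows essentially the same route as the paper: part (2) is obtained by combining the standardization theorem (Theorem \ref{mainthm2}) with the transport of the costratifying conflations $\Theta(i)\to I(i)\to C(i)$ through $\Psi$, the Wakamatsu tilting property of $\Psi(\mathbb{I})$ is established via the $\add\Psi(\mathbb{I})$-coresolution of $B\cong\Psi(\mathbb{P})$ with cokernels staying in $\mathcal{F}(\Psi(\Theta))$ (the paper packages this as Lemma \ref{lem_wak}, using that $\mathcal{F}(\Psi(\Theta))$ is resolving), the identification $\Psi'\Psi\cong\Phi$ comes from full faithfulness of $\Psi$ on $\mathcal{F}(\Theta)$, and part (1) is deduced by passing to the opposite category exactly as in the paper.
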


\section{Preliminaries}
Throughout this paper, we assume that every category is skeletally small, that is, the isomorphism classes of objects form a set.
In addition, all subcategories are assumed to be full and closed under isomorphisms.

In this section, we collect terminologies, properties and examples of extriangulated categories.
We omit the precise definition of extriangulated categories.
For details, we refer to \cite{NP19, INP}.
Let $\mathcal{C}$ be an additive category.
We say that two complexes $A\xrightarrow{f}B\xrightarrow{g}C$ and $A\xrightarrow{f'}B'\xrightarrow{g'}C$ in $\mathcal{C}$ are \emph{equivalent} if there is an isomorphism $b:B\to B'$ such that the diagram
\begin{align}
\xymatrix{
A\ar[r]^-{f}\ar@{=}[d]&B\ar[r]^-{g}\ar[d]_{b}^{\cong}&C\ar@{=}[d]\\
A\ar[r]^-{f'}&B'\ar[r]^-{g'}&C
}\notag
\end{align}
is commutative, and let $[A\xrightarrow{f}B\xrightarrow{g}C]$ denote the equivalence class of $A\xrightarrow{f}B\xrightarrow{g}C$.

An extriangulated category $\mathcal{C}=(\mathcal{C},\mathbb{E},\mathfrak{s})$
consists of the following data which satisfy certain axioms (see \cite[Definition 2.12]{NP19}).
\begin{itemize}
\item $\mathcal{C}$ is an additive category.
\item $\mathbb{E}:\mathcal{C}^{\mathrm{op}}\times \mathcal{C}\to \mathcal{A}b$ is an additive bifunctor, where $\mathcal{A}b$ denotes the category of abelian groups.
\item $\mathfrak{s}$ is a correspondence which associates an equivalence class $[A\rightarrow B\rightarrow C]$ of complexes in $\mathcal{C}$ to each $\delta\in \mathbb{E}(C,A)$.
\end{itemize}
We call a complex $A\xrightarrow{f}B\xrightarrow{g}C$ in $\mathcal{C}$ an \emph{$\mathfrak{s}$-conflation} if there exists $\delta\in \mathbb{E}(C,A)$ such that $\mathfrak{s}(\delta)=[A\xrightarrow{f}B\xrightarrow{g}C]$, and often write the $\mathfrak{s}$-conflation as $A\xrightarrow{f}B\xrightarrow{g}C\overset{\delta}{\dashrightarrow}$.
Then $f$ is called an \emph{$\mathfrak{s}$-inflation}, $g$ is called an \emph{$\mathfrak{s}$-deflation}, $C$ is denoted by $\cone(f)$ and $A$ is denoted by $\cocone(g)$.

Let $\delta\in \mathbb{E}(C,A)$.
By Yoneda's lemma, we have two natural transformations $\delta_{\sharp}:\mathcal{C}(-,C)\to \mathbb{E}(-,A)$ and $\delta^{\sharp}:\mathcal{C}(A,-)\to\mathbb{E}(C,-)$, that is, for each $W\in\mathcal{C}$,
\begin{align}
&(\delta_{\sharp})_{W}: \mathcal{C}(W,C)\to \mathbb{E}(W,A)\ \ (\varphi\mapsto\mathbb{E}(\varphi,A)(\delta)),\notag\\
&\delta^{\sharp}_{W}: \mathcal{C}(A,W)\to \mathbb{E}(C,W)\ \ (\varphi\mapsto\mathbb{E}(C,\varphi)(\delta)).\notag
\end{align}
Any $\mathfrak{s}$-conflation induces two long exact sequences in $\mathcal{A}b$.

\begin{proposition}[{\cite[Corollary 3.12]{NP19}}]\label{prop_longex}
Let $\mathcal{C}$ be an extriangulated category and
$A\xrightarrow{f}B\xrightarrow{g}C\overset{\delta}{\dashrightarrow}$ an $\mathfrak{s}$-conflation.
Then, for each $W\in\mathcal{C}$, two sequences
\begin{align}
&\mathcal{C}(W,A)\xrightarrow{\mathcal{C}(W,f)}\mathcal{C}(W,B)\xrightarrow{\mathcal{C}(W,g)}\mathcal{C}(W,C)\xrightarrow{(\delta_{\sharp})_{W}}\mathbb{E}(W,A)\xrightarrow{\mathbb{E}(W,f)}\mathbb{E}(W,B)\xrightarrow{\mathbb{E}(W,g)}\mathbb{E}(W,C),\notag\\
&\mathcal{C}(C,W)\xrightarrow{\mathcal{C}(g,W)}\mathcal{C}(B,W)\xrightarrow{\mathcal{C}(f,W)}\mathcal{C}(A,W)\xrightarrow{\delta^{\sharp}_{W}}\mathbb{E}(C,W)\xrightarrow{\mathbb{E}(g,W)}\mathbb{E}(B,W)\xrightarrow{\mathbb{E}(f,W)}\mathbb{E}(A,W) \notag
\end{align}
are exact.
\end{proposition}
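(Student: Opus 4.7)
The plan is to work directly from the axioms (ET1)–(ET4) of an extriangulated category together with the definitions of $(\delta_{\sharp})_{W}$ and $\delta^{\sharp}_{W}$. Since the opposite $\mathcal{C}^{\op}$, equipped with the transposed bifunctor and realization, is again extriangulated, it suffices to establish one of the two long exact sequences; the other follows by duality. I focus on the covariant sequence.

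Vanishing of each composition of consecutive arrows is essentially immediate: $g \circ f = 0$ because $A \to B \to C$ is an $\mathfrak{s}$-conflation, while $(\delta_{\sharp})_{W} \circ \mathcal{C}(W,g) = 0$, $\mathbb{E}(W,f) \circ (\delta_{\sharp})_{W} = 0$, and $\mathbb{E}(W,g) \circ \mathbb{E}(W,f) = 0$ all reduce, via naturality and bifunctoriality of $\mathbb{E}$, to the two identities $\mathbb{E}(g, A)(\delta) = 0$ and $\mathbb{E}(C, f)(\delta) = 0$. Both of these hold because pulling $\delta$ back along $g$, respectively pushing it out along $f$, produces a split $\mathfrak{s}$-conflation (with $\id_{B}$ serving as a section, respectively a retraction). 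Exactness at $\mathcal{C}(W,B)$ is the weak-kernel property of $f$, a direct consequence of the axioms.

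The substance lies in exactness at $\mathcal{C}(W,C)$, $\mathbb{E}(W,A)$, and $\mathbb{E}(W,B)$. At $\mathcal{C}(W,C)$: given $\varphi \colon W \to C$ with $(\delta_{\sharp})_{W}(\varphi) = 0$, I would form the pullback $\mathfrak{s}$-conflation $A \to B' \to W$ along $\varphi$; its class in $\mathbb{E}(W,A)$ equals $(\delta_{\sharp})_{W}(\varphi) = 0$, so the conflation splits, and any section composed with the induced morphism $B' \to B$ yields a lift of $\varphi$ through $g$. At $\mathbb{E}(W,A)$: given $\eta \in \mathbb{E}(W,A)$ with $\mathbb{E}(W,f)(\eta) = 0$, I would realize $\eta$ as an $\mathfrak{s}$-conflation $A \to Y \to W$, push out along $f$ to obtain a split conflation $B \to Y' \to W$, and then apply the octahedral-type axiom ET4 to produce a morphism $\varphi \colon W \to C$ whose pullback recovers $\eta$, giving $(\delta_{\sharp})_{W}(\varphi) = \eta$. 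Exactness at $\mathbb{E}(W,B)$ is handled analogously. The main obstacle is precisely this ET4 diagram chase at $\mathbb{E}(W,A)$: comparing three $\mathfrak{s}$-conflations and tracking $\mathbb{E}$-classes through pullbacks and pushouts to exhibit the required $\varphi$. Once this step is in place, the remaining assembly is routine and the contravariant sequence follows by dualization.
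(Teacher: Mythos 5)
Your plan is essentially correct, but be aware that the paper contains no proof of this statement to compare against: Proposition \ref{prop_longex} is imported verbatim from \cite[Corollary 3.12]{NP19}, and the authors simply cite it. What you sketch is in substance the Nakaoka--Palu argument: vanishing of the consecutive compositions is their Lemma 3.2 (which this paper itself invokes later), exactness at $\mathcal{C}(W,B)$ and $\mathcal{C}(W,C)$ comes from (ET3), (ET3)$^{\op}$ and the splitting of any realization of the zero class, and the contravariant sequence is indeed the dual statement, since $\mathcal{C}^{\op}$ with the transposed bifunctor is again extriangulated. So there is no divergence of route to report, only a re-derivation of a quoted result.

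Two refinements would tighten the sketch. At $\mathbb{E}(W,A)$ you do not actually need (ET4): if $A\xrightarrow{h}Y\to W$ realizes $\eta$ and the pushout along $f$ splits, a retraction of the split conflation yields $t\colon Y\to B$ with $th=f$, and then (ET3) applied to the square $(1_{A},t)$ produces $\varphi\colon W\to C$ with $1_{A*}\eta=\varphi^{*}\delta$, i.e.\ $(\delta_{\sharp})_{W}(\varphi)=\eta$; the compatibility you worry about tracking is built into the definition of a morphism of $\mathbb{E}$-extensions. By contrast, exactness at $\mathbb{E}(W,B)$ is not merely ``analogous'': that is the one spot where (ET4) genuinely enters. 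Given $\varepsilon\in\mathbb{E}(W,B)$ with $g_{*}\varepsilon=0$, realize $\varepsilon$ by an inflation $B\to Z$, compose with $f$ via (ET4) to get a conflation $A\to Z\to E$ of class $\delta''$ together with a conflation $C\to E\xrightarrow{e}W$ realizing $g_{*}\varepsilon=0$; the (ET4) compatibility $f_{*}\delta''=e^{*}\varepsilon$ and a section $s$ of $e$ give $\eta:=s^{*}\delta''$ with $f_{*}\eta=\varepsilon$. With these adjustments your outline is a faithful reconstruction of the cited proof.
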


It follows from \cite[Remark 2.16]{NP19} that if $\left[\begin{smallmatrix}f& 0\end{smallmatrix}\right]:A_{1}\oplus A_{2} \rightarrow B$ is an $\mathfrak{s}$-inflation, then so is $f$.
We discuss the case where $\left[\begin{smallmatrix}f\\ 0\end{smallmatrix}\right]:A\rightarrow B_{1}\oplus B_{2}$ is an $\mathfrak{s}$-inflation.
Recall that an additive category $\mathcal{C}$ is said to be \emph{idempotent complete} if each idempotent morphism in $\mathcal{C}$ admits a kernel.
Note that Krull--Schmidt categories are idempotent complete.

\begin{lemma} \label{lem_minmor}
Let $\mathcal{C}$ be an idempotent complete extriangulated category.
Then the following statements hold.
\begin{itemize}
\item[(1)] If $\left[\begin{smallmatrix}f\\ 0\end{smallmatrix}\right]:A\rightarrow B_{1}\oplus B_{2}$ is an $\mathfrak{s}$-inflation, then so is $f$.
Moreover, we have an isomorphism of $\mathfrak{s}$-conflations
\begin{align}
\xymatrix{
A \ar[r]^-{\tiny\left[\begin{smallmatrix}f\\
0\end{smallmatrix}\right]}\ar@{=}[d]&B_{1} \oplus B_{2} \ar[r]\ar@{=}[d]&\cone\left( {\tiny\left[\begin{smallmatrix}f\\
0\end{smallmatrix}\right]}\right) \ar[d]^-{\cong}\ar@{-->}[r]&\;\\
A \ar[r]^-{\tiny\left[\begin{smallmatrix}f\\
0\end{smallmatrix}\right]}&B_{1}\oplus B_{2}\ar[r]^-{\tiny\left[\begin{smallmatrix}g_{1} & 0\\
0 &1\end{smallmatrix}\right]}&\cone (f)\oplus B_{2}\ar@{-->}[r]^-{\tiny\left[\begin{smallmatrix}\delta_{1} & 0\end{smallmatrix}\right]}&.
}\notag
\end{align}
\item[(2)] If $\left[\begin{smallmatrix}f_{1} & 0\\
0 &f_{2}\end{smallmatrix}\right]:A_{1} \oplus A_{2} \rightarrow B_{1} \oplus B_{2}$ is an $\mathfrak{s}$-inflation, then so are $f_{1}$ and $f_{2}$.
Moreover, we have an isomorphism of $\mathfrak{s}$-conflations
\begin{align}
\xymatrix{
A_{1} \oplus A_{2} \ar[r]^-{\tiny\left[\begin{smallmatrix}f_{1} & 0\\
0 &f_{2}\end{smallmatrix}\right]}\ar@{=}[d]&B_{1} \oplus B_{2} \ar[r]\ar@{=}[d]&\cone\left( {\tiny\left[\begin{smallmatrix}f_{1} & 0\\
0 &f_{2}\end{smallmatrix}\right]}\right) \ar[d]^-{\cong}\ar@{-->}[r]&\;\\
A_{1} \oplus A_{2}\ar[r]^-{\tiny\left[\begin{smallmatrix}f_{1} & 0\\
0 &f_{2}\end{smallmatrix}\right]}&B_{1}\oplus B_{2}\ar[r]^-{\tiny\left[\begin{smallmatrix}g_{1} & 0\\
0 &g_{2}\end{smallmatrix}\right]}&\cone (f_{1})\oplus \cone({f}_{2})\ar@{-->}[r]^-{\tiny\left[\begin{smallmatrix}\delta_{1} & 0\\
0 &\delta_{2}\end{smallmatrix}\right]}&.
}\notag
\end{align}
\end{itemize}
\end{lemma}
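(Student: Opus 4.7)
The plan is to prove (1) first and then derive (2) from (1) by combining the closure of $\mathfrak{s}$-inflations under composition with the octahedral-type axiom (ET4). The central idea for (1) is to split the summand $B_2$ of the target off the cone of the given inflation.

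For (1), fix an $\mathfrak{s}$-conflation $A\xrightarrow{\left[\begin{smallmatrix}f\\ 0\end{smallmatrix}\right]}B_1\oplus B_2\xrightarrow{[h_1\ h_2]}C\overset{\delta}{\dashrightarrow}$. The square $\pi_2\circ\left[\begin{smallmatrix}f\\ 0\end{smallmatrix}\right]=0=\id_{B_2}\circ 0$ together with the split $\mathfrak{s}$-conflation $0\to B_2\xrightarrow{\id}B_2\overset{0}{\dashrightarrow}$ satisfies the hypothesis of the morphism-extension axiom (ET3), yielding $r\colon C\to B_2$ with $rh_1=0$ and $rh_2=\id_{B_2}$. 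Thus $h_2$ is a split monomorphism and $h_2 r$ is idempotent on $C$; idempotent completeness gives $C\cong B_2\oplus C'$ for some $C'$, and with the projection $p\colon C\to C'$ one checks $ph_2=0$, so the deflation takes the matrix form $\left[\begin{smallmatrix}0 & \id\\ ph_1 & 0\end{smallmatrix}\right]$. Swapping the two summands of $C$ produces an equivalent conflation with block-diagonal deflation $\left[\begin{smallmatrix}g_1 & 0\\ 0 & \id\end{smallmatrix}\right]$, where $g_1:=ph_1$. To identify this with the direct sum of $A\xrightarrow{f}B_1\xrightarrow{g_1}C'\dashrightarrow$ and the trivial $0\to B_2\xrightarrow{\id}B_2$, I exploit $\mathbb{E}(C'\oplus B_2,A)\cong\mathbb{E}(C',A)\oplus\mathbb{E}(B_2,A)$: the target inclusion $B_2\hookrightarrow C'\oplus B_2$ factors through the new deflation via the source inclusion $B_2\to B_1\oplus B_2$, and since $\delta=\delta^{\sharp}_{A}(\id_A)$, the exact sequence in Proposition~\ref{prop_longex} forces the $\mathbb{E}(B_2,A)$-component of $\delta$ to vanish; additivity of the realization $\mathfrak{s}$ then completes (1).

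For (2), closure of $\mathfrak{s}$-inflations under composition gives that $\left[\begin{smallmatrix}f_1\\ 0\end{smallmatrix}\right]=(f_1\oplus f_2)\iota_1$ is an $\mathfrak{s}$-inflation, so (1) yields that $f_1$ (and symmetrically $f_2$) is one. Applying (ET4) to the composition $A_1\xrightarrow{\iota_1}A_1\oplus A_2\xrightarrow{f_1\oplus f_2}B_1\oplus B_2$ produces a conflation $A_2\xrightarrow{d}E\xrightarrow{e}C\dashrightarrow$, where $E:=\cone(\left[\begin{smallmatrix}f_1\\ 0\end{smallmatrix}\right])$; by (1), $E\cong\cone(f_1)\oplus B_2$, and the normalized deflation $h'$ from (1) combined with the (ET4) compatibility $d\pi_2=h'(f_1\oplus f_2)$ forces $d=\left[\begin{smallmatrix}0\\ f_2\end{smallmatrix}\right]$. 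A second application of (1) (after swapping summands) then yields $C\cong\cone(f_1)\oplus\cone(f_2)$, and the remaining (ET4) compatibility of the form $h=eh'$ assembles the required block-diagonal form of the deflation of the original conflation. The main obstacle is the additivity step for (1): one must carefully use the bifunctoriality of $\mathbb{E}$ and the exactness in Proposition~\ref{prop_longex} to promote the vanishing of a single component of $\delta$ into a genuine direct-sum decomposition of $\mathfrak{s}$-conflations.
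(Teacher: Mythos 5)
Your argument is correct, and for part (1) it is essentially the paper's proof in different clothing: where you invoke (ET3) against the split conflation $0\to B_{2}\xrightarrow{\id}B_{2}\overset{0}\dashrightarrow$ to produce the retraction $r$ with $rh_{1}=0$, $rh_{2}=\id$, the paper gets the same morphism by applying $\mathcal{C}(-,B_{2})$ and using the exactness of Proposition \ref{prop_longex}; the subsequent steps coincide (the idempotent $h_{2}r$ splits $C\cong C'\oplus B_{2}$ by idempotent completeness, the transported deflation becomes block-diagonal, the $\mathbb{E}(B_{2},A)$-component of the extension dies because the inclusion $B_{2}\to C'\oplus B_{2}$ factors through the new deflation -- the paper quotes \cite[Lemma 3.2]{NP19} for exactly this -- and additivity of $\mathfrak{s}$ finishes; your parenthetical ``$\delta=\delta^{\sharp}_{A}(\id_{A})$'' is irrelevant but harmless, and the transport of the conflation along $C\cong C'\oplus B_{2}$ is the step the paper covers by \cite[Proposition 3.7]{NP19}). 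For part (2) you actually do more than the paper: its written proof only observes that $\left[\begin{smallmatrix}f_{1}\\ 0\end{smallmatrix}\right]=\left[\begin{smallmatrix}f_{1}&0\\ 0&f_{2}\end{smallmatrix}\right]\left[\begin{smallmatrix}1\\ 0\end{smallmatrix}\right]$ is a composite of $\mathfrak{s}$-inflations and appeals to (1), leaving the displayed isomorphism of conflations implicit, whereas you run (ET4) on this composition, use (1) to normalize $\cone\left(\left[\begin{smallmatrix}f_{1}\\ 0\end{smallmatrix}\right]\right)\cong\cone(f_{1})\oplus B_{2}$, read off $d=\left[\begin{smallmatrix}0\\ f_{2}\end{smallmatrix}\right]$ from $d\pi_{2}=h'(f_{1}\oplus f_{2})$ together with $g_{1}f_{1}=0$, and apply (1) once more to assemble $\cone(f_{1})\oplus\cone(f_{2})$ and the block-diagonal deflation via $h=eh'$; this buys an explicit proof of the ``moreover'' statement of (2). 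The only places you are terse are exactly the places the paper is terse too: the final additivity step in (1), and in (2) the identification of the extension class with $\left[\begin{smallmatrix}\delta_{1}&0\\ 0&\delta_{2}\end{smallmatrix}\right]$ (which needs the component-wise computation of the transported extension, e.g.\ pulling back along the two inclusions as in your step for (1)), so there is no genuine gap relative to the paper's standard of detail.
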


\begin{proof}
(1) By the assumption, there exists an $\mathfrak{s}$-conflation $A\xrightarrow{\tiny\left[\begin{smallmatrix}f\\ 0\end{smallmatrix}\right]} B_{1}\oplus B_{2} \xrightarrow{\tiny\left[\begin{smallmatrix}g'_{1} & g'_{2}\end{smallmatrix}\right]} C \overset{\delta}\dashrightarrow$.
Applying $\mathcal{C}(-, B_{2})$ to the $\mathfrak{s}$-conflation induces an exact sequence
\begin{align}
\mathcal{C}(C, B_{2}) \to \mathcal{C}(B_{1} \oplus B_{2}, B_{2}) \to \mathcal{C}(A, B_{2}). \notag
\end{align}
Thus we obtain $\varphi \in \mathcal{C}(C, B_{2})$ with $\varphi g'_{1}=0$ and $\varphi g'_{2}=1_{B_{2}}$.
Since $g'_{2} \varphi$ is an idempotent morphism in an idempotent complete category $\mathcal{C}$, the
kernel $C':=\ker (g'_{2}\varphi)$ exists.
Then there exist $p: C \to C'$ and $i: C'\to C$ such that $pg'_{2}=0$ and $\left[\begin{smallmatrix}i&g'_{2}\end{smallmatrix}\right]: C' \oplus B_{2} \to C$ is an isomorphism with inverse $\left[\begin{smallmatrix}p\\ \varphi \end{smallmatrix}\right]: C \to C' \oplus B_{2}$.
By \cite[Proposition 3.7]{NP19}, we obtain an $\mathfrak{s}$-conflation $A  \xrightarrow{\tiny\left[\begin{smallmatrix}f\\ 0  \end{smallmatrix}\right]} B_{1} \oplus B_{2} \xrightarrow{\tiny\left[\begin{smallmatrix}g_{1} & 0\\0 & 1 \end{smallmatrix}\right]} C' \oplus B_{2} \overset{\tiny\left[\begin{smallmatrix}\delta_{1} & \delta_{2} \end{smallmatrix}\right]}\dashrightarrow$, where $g_{1} :=p g'_{1}$, $\delta_{1}:=\mathbb{E}(i,A)(\delta) \in \mathbb{E}(C', A)$ and $\delta_{2}:=\mathbb{E}(g'_{2},A)(\delta) \in \mathbb{E}(B_{2}, A)$.
Due to \cite[Lemma 3.2]{NP19}, we have $\delta_{2}=0$.
It follows from the additivity of $\mathfrak{s}$ that $A \xrightarrow{f} B_{1} \xrightarrow{g_{1}} C' \overset{\delta_{1}} \dashrightarrow$ is an $\mathfrak{s}$-conflation.

(2) We only prove that $f_{1}$ is an $\mathfrak{s}$-inflation since the proof of $f_{2}$ is similar.
It follows from \cite[Remark 2.16]{NP19} that $\left[ \begin{smallmatrix}f_{1}\\ 0 \end{smallmatrix}\right] = \tiny\left[\begin{smallmatrix}f_{1}&0\\ 0&f_{2}\end{smallmatrix}\right] \left[\begin{smallmatrix}1\\ 0 \end{smallmatrix}\right]$ is an $\mathfrak{s}$-inflation, and hence so is $f_{1}$ by (1).
\end{proof}

By Lemma \ref{lem_minmor}, we consider decompositions of morphisms as follows.

\begin{remark}\label{rem_minmor}
Recall that any morphism $f: M\to N$ in a Krull--Schmidt extriangulated category has two decompositions $f=\left[\begin{smallmatrix}f'\\0 \end{smallmatrix}\right]: M\to N'\oplus N''$ with $f'$ left minimal and $f=\left[\begin{smallmatrix}0& f'' \end{smallmatrix}\right]: M'\oplus M''\to N$ with $f''$ right minimal.
If $f$ is an $\mathfrak{s}$-inflation (respectively, $\mathfrak{s}$-deflation), then $f'$ and $f''$ are also $\mathfrak{s}$-inflations (respectively, $\mathfrak{s}$-deflations).
In this case, by Lemma \ref{lem_minmor}, $\cone (f')$ and $\cone (f'')$ (respectively, $\cocone (f')$ and $\cocone (f'')$) are direct summands of $\cone(f)$ (respectively, $\cocone (f)$).
\end{remark}

We define extension-closed subcategories of extriangulated categories.

\begin{definition}
Let $\mathcal{C}$ be an extriangulated category.
\begin{itemize}
\item[(1)] For two subcategories $\mathcal{X}$ and $\mathcal{Y}$ of  $\mathcal{C}$, let $\mathcal{X}\ast\mathcal{Y}$ denote the subcategory of $\mathcal{C}$ consisting of $M\in\mathcal{C}$ which admits an $\mathfrak{s}$-conflation $X\rightarrow M\rightarrow Y\dashrightarrow$ with $X\in \mathcal{X}$ and $Y\in\mathcal{Y}$.
\item[(2)] A subcategory $\mathcal{C}'$ of $\mathcal{C}$ is said to be \emph{extension-closed} if $\mathcal{C}'\ast \mathcal{C}'\subseteq \mathcal{C}'$.
\end{itemize}
\end{definition}

In the following, we collect properties of the operation $\ast$.
Note that the operation $\ast$ is associative, that is, for three subcategories $\mathcal{X},\mathcal{Y},\mathcal{Z}$ of $\mathcal{C}$, the equality $(\mathcal{X}\ast \mathcal{Y})\ast \mathcal{Z}=\mathcal{X}\ast(\mathcal{Y}\ast \mathcal{Z})$ holds by (ET4) and (ET4)$^{\op}$ in \cite[Definition 2.12]{NP19}.
For $i \in \{1, 2 \}$, let $\mathcal{X}_{i}, \mathcal{Y}_{i}$ be subcategories of $\mathcal{C}$.
If $\mathcal{C}(\mathcal{X}_{i}, \mathcal{Y}_{j})=0$ for each $i, j \in \{1, 2\}$, then $\mathcal{C}(\mathcal{X}_{1} \ast \mathcal{X}_{2}, \mathcal{Y}_{1} \ast \mathcal{Y}_{2})=0$ by Proposition \ref{prop_longex}.
Similarly, if $\mathbb{E}(\mathcal{X}_{i}, \mathcal{Y}_{j})=0$ for each $i, j \in \{1, 2\}$, then $\mathbb{E}(\mathcal{X}_{1} \ast \mathcal{X}_{2}, \mathcal{Y}_{1} \ast \mathcal{Y}_{2})=0$.

\begin{lemma}\label{lem_extcl}
Let $\mathcal{X},\mathcal{Y}$ be subcategories of $\mathcal{C}$ which are closed under direct summands.
Then the following statements hold.
\begin{itemize}
\item[(1)] If $\mathbb{E}(\mathcal{X},\mathcal{Y})=0$, then $\mathcal{Y}\ast \mathcal{X}\subseteq \mathcal{X}\ast\mathcal{Y}$.
\item[(2)] Assume that $\mathcal{C}$ is a Krull--Schmidt category.
If $\mathcal{C}(\mathcal{X},\mathcal{Y})=0$, then $\mathcal{X}\ast \mathcal{Y}$ is closed under direct summands.
\end{itemize}
\end{lemma}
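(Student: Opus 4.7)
For (1), any $M\in\mathcal{Y}\ast\mathcal{X}$ admits an $\mathfrak{s}$-conflation $Y\to M\to X\overset{\delta}{\dashrightarrow}$ with $Y\in\mathcal{Y}$ and $X\in\mathcal{X}$. The class $\delta$ lies in $\mathbb{E}(X,Y)=0$, so the conflation splits and $M\cong X\oplus Y$. The split $\mathfrak{s}$-conflation $X\to X\oplus Y\to Y\overset{0}{\dashrightarrow}$ then realises $M$ as an object of $\mathcal{X}\ast\mathcal{Y}$.

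For (2), let $M=M_1\oplus M_2\in\mathcal{X}\ast\mathcal{Y}$ via an $\mathfrak{s}$-conflation $X\xrightarrow{f}M\xrightarrow{g}Y\dashrightarrow$ with $X\in\mathcal{X}$ and $Y\in\mathcal{Y}$. The plan is to combine two minimisations with Krull--Schmidt and to run an induction on the total number of indecomposable summands of $M$. By Remark~\ref{rem_minmor} applied to $f$, together with Lemma~\ref{lem_minmor}(1), one peels off from $M$ a direct summand that simultaneously appears as a direct summand of $Y$, hence lies in $\mathcal{Y}$. Dually, right-minimising $g$ peels off a summand of $M$ that is a summand of $X$ and so lies in $\mathcal{X}$. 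This yields $M\cong\tilde{M}\oplus N_{\mathcal{X}}\oplus N_{\mathcal{Y}}$ with $N_{\mathcal{X}}\in\mathcal{X}$, $N_{\mathcal{Y}}\in\mathcal{Y}$ and $\tilde{M}$ sitting in a minimal conflation. Krull--Schmidt applied to the two decompositions $M_1\oplus M_2\cong\tilde{M}\oplus N_{\mathcal{X}}\oplus N_{\mathcal{Y}}$ then induces $M_1\cong M_1'\oplus P\oplus Q$ with $P\in\mathcal{X}$, $Q\in\mathcal{Y}$ and $M_1'$ a direct summand of $\tilde{M}$. Since $P$ and $Q$ already lie in $\mathcal{X}\ast\mathcal{Y}$ via the split conflations $P\to P\to 0\dashrightarrow$ and $0\to Q\to Q\dashrightarrow$, and $\mathcal{X}\ast\mathcal{Y}$ is closed under finite direct sums by the additivity of $\mathfrak{s}$, it suffices to prove $M_1'\in\mathcal{X}\ast\mathcal{Y}$.

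When the minimisation above is non-trivial, $\tilde{M}$ has strictly fewer indecomposable summands than $M$, and the inductive hypothesis applied to the decomposition of $\tilde{M}$ induced by that of $M$ completes the argument. The crux is therefore the base case in which the conflation $X\to M\to Y$ is already minimal on both sides. Here I plan to invoke Proposition~\ref{prop_longex}: the hypothesis $\mathcal{C}(\mathcal{X},\mathcal{Y})=0$ makes $\mathcal{C}(Y,Y)\to\mathcal{C}(M,Y)$ surjective, and dually $\mathcal{C}(X,X)\to\mathcal{C}(X,M)$, so each splitting idempotent $e_i=\iota_i\pi_i\in\End(M)$ lifts to a morphism of conflations $(\psi_i,e_i,\varphi_i)$ with $f\psi_i=e_if$ and $\varphi_ig=ge_i$. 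The main obstacle I anticipate is upgrading the identities $\psi_1+\psi_2-\id\in\ker(f\circ-)$ and $\varphi_1+\varphi_2-\id\in\ker(-\circ g)$ to genuine idempotency of $\psi_i$ and $\varphi_i$, which is what produces a splitting of the conflation as a direct sum compatible with $M=M_1\oplus M_2$ and thereby the desired conflation for $M_1$; I expect this upgrade to follow by using the left-minimality of $f$ and the right-minimality of $g$ together with the Hom-vanishing.
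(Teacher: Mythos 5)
Part (1) is correct and coincides with the paper's argument. For part (2), your peeling step is legitimate: writing $f=\left[\begin{smallmatrix}f'\\0\end{smallmatrix}\right]$ with $f'$ left minimal and $g=\left[\begin{smallmatrix}0&g''\end{smallmatrix}\right]$ with $g''$ right minimal does split off summands of $M$ lying in $\mathcal{Y}$ resp.\ $\mathcal{X}$, and an induction on the number of indecomposable summands of $M$ is a reasonable frame (though note that when you reassemble $M_1\cong M_1'\oplus P\oplus Q$ you need $\mathcal{X}\ast\mathcal{Y}$ to be closed under finite direct sums, which silently uses that $\mathcal{X}$ and $\mathcal{Y}$ are closed under direct sums -- not among the stated hypotheses, which only require closure under direct summands; the paper's proof never needs this because the end terms it produces are literally direct summands of the original $X$ and $Y$). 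The real problem is that your base case is not proved: the "upgrade to genuine idempotency" that you defer is exactly where the argument breaks, and the minimality you have at that point is the wrong kind to carry it out.

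Concretely, the ambiguity in a lift $\psi_i$ with $f\psi_i=e_if$ is the set $I_X:=\{v\in\End_{\mathcal{C}}(X)\mid fv=0\}$, and dually the ambiguity in $\varphi_i$ is $I_Y:=\{w\in\End_{\mathcal{C}}(Y)\mid wg=0\}$. Since $f$ need not be a monomorphism in an extriangulated category, $f(\psi_i^2-\psi_i)=0$ does not give $\psi_i^2=\psi_i$; you must be able to correct $\psi_i$ inside $I_X$, and what places $I_X$ inside $\rad\End_{\mathcal{C}}(X)$ is \emph{right} minimality of $f$ (dually, \emph{left} minimality of $g$ for $I_Y$). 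Your peeling produces the opposite pair -- $f$ left minimal and $g$ right minimal -- which controls $\{u\in\End_{\mathcal{C}}(M)\mid uf=f\}$ and $\{v\in\End_{\mathcal{C}}(M)\mid gv=g\}$ and gives no handle whatsoever on $\psi_i$ or $\varphi_i$. Even if you additionally right-minimise $f$, $I_X$ is only a one-sided ideal contained in the radical, so "replace $\psi_i$ by an idempotent without changing $f\psi_i$" is not a standard idempotent-lifting statement and would still need an argument. The paper sidesteps all of this: after making $f$ right minimal (shrinking $X$ and $Y$, keeping $M$), the hypothesis $\mathcal{C}(\mathcal{X},\mathcal{Y})=0$ makes $f$ a minimal right $\mathcal{X}$-approximation of $M=M_1\oplus M_2$; taking minimal right $\mathcal{X}$-approximations $f_i\colon X_i\to M_i$, uniqueness of minimal approximations identifies $f$ with $\left[\begin{smallmatrix}f_1&0\\0&f_2\end{smallmatrix}\right]$, and Lemma \ref{lem_minmor}(2) then yields conflations $X_i\to M_i\to\cone(f_i)\dashrightarrow$ with $\cone(f_1)\oplus\cone(f_2)\cong Y$, whence $\cone(f_i)\in\mathcal{Y}$. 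To repair your proof, replace the idempotent-lifting base case by this approximation-uniqueness mechanism (which is where Krull--Schmidt actually does the splitting for you); as written, the idempotency step is a genuine gap.
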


\begin{proof}
(1) Let $Z \in \mathcal{Y} \ast \mathcal{X}$.
Then there exists an $\mathfrak{s}$-conflation $Y \to Z \to X \dashrightarrow$ such that $Y \in \mathcal{Y}$ and $X \in \mathcal{X}$.
By $\mathbb{E}(\mathcal{X}, \mathcal{Y})=0$, the $\mathfrak{s}$-conflation splits, and hence $Z \cong X \oplus Y \in \mathcal{X} \ast \mathcal{Y}$.

(2) Let $Z:=Z_{1}\oplus Z_{2}\in \mathcal{X} \ast \mathcal{Y}$.
We show $Z_{1},Z_{2}\in \mathcal{X}\ast \mathcal{Y}$.
Take an $\mathfrak{s}$-conflation $X \overset{f}{\rightarrow} Z \overset{g}{\rightarrow} Y \dasharrow$ with $X\in \mathcal{X}$ and $Y\in \mathcal{Y}$.
Since $\mathcal{C}$ is a Krull--Schmidt category and $\mathcal{X}, \mathcal{Y}$ are closed under direct summands, we may assume that $f$ is a right minimal morphism by Remark \ref{rem_minmor}.
Then $f$ is a minimal right $\mathcal{X}$-approximation of $Z$ by $\mathcal{C}(\mathcal{X}, \mathcal{Y})=0$.
Let $p_{i}: Z \to Z_{i}$ be the projection for each $i\in\{1,2\}$.
Since $p_{i}f: X \to Z_{i}$ is a right $\mathcal{X}$-approximation, we can take a minimal right $\mathcal{X}$-approximation $f_{i}:X_{i}\rightarrow Z_{i}$ of $Z_{i}$.
Then $\tiny\left[\begin{smallmatrix}f_{1}&0\\ 0&f_{2}\end{smallmatrix}\right]:X_{1}\oplus X_{2}\rightarrow Z_{1}\oplus Z_{2}$ is also a minimal right $\mathcal{X}$-approximation of $Z$.
By the uniqueness of a minimal right $\mathcal{X}$-approximation, we have $X_{1} \oplus X_{2} \cong X$, and hence  $\tiny\left[\begin{smallmatrix}f_{1}&0\\ 0&f_{2}\end{smallmatrix}\right]$ is an $\mathfrak{s}$-inflation by \cite[Proposition 3.7]{NP19}.
Since it follows from Lemma \ref{lem_minmor}(2) that $f_{1},f_{2}$ are $\mathfrak{s}$-inflations and $\cone (f_{1}) \oplus \cone (f_{2}) \cong Y$ holds, we have an $\mathfrak{s}$-conflation $X_{i} \xrightarrow{f_{i}} Z_{i} \to \cone (f_{i}) \dashrightarrow$ with $X_{i} \in \mathcal{X}$ and $\cone(f_{i}) \in \mathcal{Y}$.
Thus $Z_{1}, Z_{2} \in \mathcal{X}\ast \mathcal{Y}$.
\end{proof}

For an object $M \in \mathcal{C}$, we define a subcategory $\mathcal{F}(M)$ as
\begin{align}
\mathcal{F}(M)=\bigcup_{l>0}\underbrace{\add M\ast \cdots \ast \add M}_{\textnormal{$l$ times}}, \notag
\end{align}
where $\add M$ denotes the subcategory of $\mathcal{C}$ whose objects are direct summands of finite direct sums of $M$.
Note that $\mathcal{F}(M)$ is an extension-closed subcategory of $\mathcal{C}$.
We give a description of $\mathcal{F}(M)$ as follows.

\begin{lemma}\label{lem_filt}
Let $\displaystyle M:=\oplus_{i=1}^{n}M_{i}$ be an object in $\mathcal{C}$, where each $M_{i}$ is indecomposable.
Then we have
\begin{align}
\displaystyle \mathcal{F}(M)=\bigcup_{\substack{i_{1},\ldots, i_{l} \in [1,n]\\l>0}} \add M_{i_{1}} \ast \cdots \ast \add M_{i_{l}}. \notag
\end{align}
\end{lemma}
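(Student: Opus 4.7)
The plan is to prove the two inclusions, with the nontrivial direction being that any $\ast$-filtration by $\add M$ can be refined into a $\ast$-filtration by the individual $\add M_{i}$'s. The inclusion $\supseteq$ is immediate: since each $M_{i_{j}}$ is a direct summand of $M$, we have $\add M_{i_{j}} \subseteq \add M$, so every term $\add M_{i_{1}} \ast \cdots \ast \add M_{i_{l}}$ is contained in $\underbrace{\add M \ast \cdots \ast \add M}_{l\text{ times}}$, which by definition lies in $\mathcal{F}(M)$.

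For the opposite inclusion I would first establish the base observation that $\add M$ already sits inside a union of the advertised form. Given $A \in \add M$, decompose $A = A_{1} \oplus A_{2} \oplus \cdots \oplus A_{n}$ with $A_{i} \in \add M_{i}$. Since split short sequences are $\mathfrak{s}$-conflations in any extriangulated category (they are realized by the zero element of $\mathbb{E}$), each split sequence
\begin{equation*}
A_{i} \to A_{i} \oplus (A_{i+1} \oplus \cdots \oplus A_{n}) \to A_{i+1} \oplus \cdots \oplus A_{n} \dashrightarrow
\end{equation*}
is an $\mathfrak{s}$-conflation. Iterating these splittings puts $A$ in $\add M_{i_{1}} \ast \cdots \ast \add M_{i_{m}}$, where $(i_{1}, \ldots, i_{m})$ enumerates the indices with $A_{i} \neq 0$; this sequence lies in $[1,n]$, which is exactly what the statement requires.

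Next I would induct on $l \geq 1$ to prove that $\underbrace{\add M \ast \cdots \ast \add M}_{l\text{ times}}$ is contained in the right-hand union. The case $l=1$ is the observation just made. For the inductive step, any $X \in (\add M)^{\ast l}$ fits into an $\mathfrak{s}$-conflation $A \to X \to Y \dashrightarrow$ with $A \in \add M$ and $Y \in (\add M)^{\ast (l-1)}$. Applying the base case to $A$ and the inductive hypothesis to $Y$ produces index sequences $(i_{1},\ldots,i_{p})$ and $(j_{1},\ldots,j_{q})$ with $A \in \add M_{i_{1}} \ast \cdots \ast \add M_{i_{p}}$ and $Y \in \add M_{j_{1}} \ast \cdots \ast \add M_{j_{q}}$. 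Then $X$ lies in the product $(\add M_{i_{1}} \ast \cdots \ast \add M_{i_{p}}) \ast (\add M_{j_{1}} \ast \cdots \ast \add M_{j_{q}})$, and the associativity of $\ast$ recalled just before the lemma collapses this into $\add M_{i_{1}} \ast \cdots \ast \add M_{i_{p}} \ast \add M_{j_{1}} \ast \cdots \ast \add M_{j_{q}}$, which is one of the terms of the union.

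I do not anticipate a real obstacle: the argument is formal once one uses that split sequences are $\mathfrak{s}$-conflations and that $\ast$ is associative. The only piece of bookkeeping is the treatment of vanishing summands $A_{i}$, which causes no harm since $0 \in \add M_{i}$ for every $i$, so indices may freely be dropped from or inserted into the sequence without changing the resulting subcategory.
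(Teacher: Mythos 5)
Your proof is correct, and it takes a different route from the paper's. The paper works directly with a filtration $0=X_{0}\to X_{1}\to\cdots\to X_{l}=X$ whose cones lie in $\add M$, and refines it by hand: for each step it splits the cone $C_{i}$ into its isotypic pieces and applies (ET4)$^{\op}$ repeatedly to interpolate new objects $X_{i}^{j}$, producing a longer chain of $\mathfrak{s}$-inflations whose cones lie in the individual $\add M_{i_{j}}$. You instead package the same content more formally: the base observation that split sequences are $\mathfrak{s}$-conflations puts $\add M$ inside the union, and then the definitional fact that a conflation $A\to X\to Y\dashrightarrow$ with $A\in\mathcal{A}$, $Y\in\mathcal{B}$ gives $X\in\mathcal{A}\ast\mathcal{B}$, combined with the associativity of $\ast$ recorded just before the lemma, yields the inclusion by induction on $l$. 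Since that associativity is itself a consequence of (ET4) and (ET4)$^{\op}$, the two arguments rest on the same axioms, but yours outsources the octahedral bookkeeping to the associativity statement and is correspondingly shorter and cleaner. One point worth being aware of: your base case assumes that every $A\in\add M$ decomposes as $A_{1}\oplus\cdots\oplus A_{n}$ with $A_{i}\in\add M_{i}$, which is not automatic in an arbitrary additive category; the paper's own proof makes the even stronger assumption that $C_{i}$ is a direct sum of copies of the $M_{i_{j}}$, so this is not a defect relative to the paper, and it is harmless in the Krull--Schmidt settings where the lemma is actually used.
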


\begin{proof}
By definition, $\displaystyle \mathcal{F}(M) \supset \bigcup_{\substack{i_{1},\ldots, i_{l} \in [1,n]\\l >0}} \add M_{i_{1}} \ast \cdots \ast \add M_{i_{l}}$ holds.
We show the converse inclusion.
Let $X \in \mathcal{F}(M)$.
Then there exists a sequence of $\mathfrak{s}$-inflations
\begin{align}
0=X_{0}\xrightarrow{f_{0}=0} X_{1}\xrightarrow{f_{1}} X_{2}\xrightarrow{f_{2}} \cdots \xrightarrow{f_{l-1}} X_{l}=X \notag
\end{align}
such that $C_{i}:=\cone (f_{i-1}) \in \add M$ for each $i\in [1,l]$.
Consider the $\mathfrak{s}$-conflation $X_{i-1} \xrightarrow{f_{i-1}} X_{i} \to C_{i} \dashrightarrow$.
We decompose $C_{i}$ as $C_{i}=M_{i_{1}}^{\oplus m_{1}} \oplus M_{i_{2}}^{\oplus m_{2}} \oplus \cdots \oplus M_{i_{k}}^{\oplus m_{k}}$, where all $m_{p} \neq 0$.
Let $C_{i}^{j}:=M_{i_{j+1}}^{\oplus m_{j+1}} \oplus \cdots \oplus M_{i_{k}}^{\oplus m_{k}}$.
Applying (ET4)$^{\op}$ in \cite[Definition 2.12]{NP19} to $X_{i-1} \xrightarrow{f_{i-1}} X_{i} \to C_{i} \dashrightarrow$ and $M^{\oplus m_{1}}_{i_{1}} \to C_{i} \to C_{i}^{1}\overset{0}\dashrightarrow$ induces a commutative diagram of $\mathfrak{s}$-conflations
\begin{align}
\xymatrix{
X_{i-1} \ar[r]^{g_{1}} \ar@{=}[d] &X_{i}^{1} \ar[r]\ar[d]&M_{i_{1}}^{\oplus m_{1}} \ar[d] \ar@{-->}[r] &\\
X_{i-1} \ar[r]^{f_{i-1}}&X_{i} \ar[r]\ar[d]&C_{i} \ar@{-->}[r]\ar[d]&.\\
&C_{i}^{1}\ar@{-->}[d]\ar@{=}[r] &C_{i}^{1}\ar@{-->}[d]^{0}&\\
&&&}\notag
\end{align}
Thus $X_{i}^{1} \to X_{i} \to C_{i}^{1} \dashrightarrow$ is an $\mathfrak{s}$-conflation.
Inductively, we obtain an $\mathfrak{s}$-conflation $X_{i}^{j-1} \to X_{i} \to C_{i}^{j-1} \dashrightarrow$.
Applying (ET4)$^{\op}$ in \cite[Definition 2.12]{NP19} to the $\mathfrak{s}$-conflation and $M_{i_{j}}^{\oplus m_{j}} \to C_{i}^{j-1} \to C_{i}^{j} \overset{0}\dashrightarrow$, we have a commutative diagram of $\mathfrak{s}$-conflations
\begin{align}
\xymatrix{
X_{i}^{j-1} \ar[r]^{g_{j}} \ar@{=}[d] &X_{i}^{j} \ar[r]\ar[d]&M_{i_{j}}^{\oplus m_{j}} \ar[d] \ar@{-->}[r] &\\
X_{i}^{j-1} \ar[r]&X_{i} \ar[r]\ar[d]&C_{i}^{j-1} \ar@{-->}[r] \ar[d]&.\\
&C_{i}^{j}\ar@{=}[r]\ar@{-->}[d] &C_{i}^{j}\ar@{-->}[d]^{0}&&\\
&&&
}\notag
\end{align}
Thus $X_{i}^{j-1} \xrightarrow{g_{j}} X_{i}^{j} \to M_{i_{j}}^{\oplus m_{j}} \dashrightarrow$ is an $\mathfrak{s}$-conflation.
Repeating this process, there exists a sequence of $\mathfrak{s}$-inflations
\begin{align}
X_{i-1}\xrightarrow{g_{1}} X_{i}^{1}\xrightarrow{g_{2}} X_{i}^{2}\xrightarrow{g_{3}} \cdots \xrightarrow{g_{k}} X^{k}_{i}=X_{i} \notag
\end{align}
such that $\cone (g_{j}) \cong M_{i_{j}}^{\oplus m_{j}}$ for each $j \in [1, k]$.
This completes the proof.
\end{proof}

The subcategory $\mathcal{F}(M)$ is not necessarily closed under direct summands (see \cite[page 210]{R91}).
However the following result is known.

\begin{proposition}[{\cite[Lemma 5.4(1)]{WWZ21}}] \label{prop_sbrick-dirsummand}
Let $\mathcal{C}$ be an idempotent complete extriangulated category.
If $S$ is a (semi)brick, then $\mathcal{F}(S)$ is closed under direct summands.
\end{proposition}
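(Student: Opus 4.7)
The plan is to proceed by induction on the minimal filtration length $\ell(X)$ of an object $X\in\mathcal{F}(S)$ with a direct-sum decomposition $X=X_{1}\oplus X_{2}$; by Lemma \ref{lem_filt}, I may refine filtrations so that every layer is a single indecomposable summand $S_{i}$ of $S=\bigoplus_{i=1}^{n}S_{i}$. The base case $\ell(X)\leq 1$ reduces to showing that $\add S$ is closed under direct summands. Since each $S_{i}$ is a brick, $\End_{\mathcal{C}}(S_{i})$ is a division ring (in particular local), and together with idempotent completeness of $\mathcal{C}$ this makes $\add S$ a Krull--Schmidt additive subcategory, hence closed under direct summands.

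For the inductive step, assume $\ell(X)=l\geq 1$ and take an $\mathfrak{s}$-conflation
\[
Y\longrightarrow X\xrightarrow{\,g\,} S_{i}\dashrightarrow
\]
coming from the last step of a minimal filtration, so $\ell(Y)=l-1$. The argument bifurcates according to whether $g$ is a split epimorphism. If $g$ splits, then $X\cong Y\oplus S_{i}$; applying the inductive hypothesis to $Y$ shows that every indecomposable summand of $Y$ lies in $\mathcal{F}(S)$, so each indecomposable summand of $X_{k}$, being an indecomposable summand of $Y\oplus S_{i}$, also lies in $\mathcal{F}(S)$, and hence so does $X_{k}$ itself (as $\mathcal{F}(S)$ is closed under finite direct sums). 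If $g$ does not split, then neither of the two composites $g_{k}:=g\circ\iota_{k}\colon X_{k}\to S_{i}$ can be a split epimorphism, since a section of $g_{k}$ composed with $\iota_{k}$ would yield a section of $g$.

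The core of the non-split case is to produce a decomposition of the conflation that is compatible with $X=X_{1}\oplus X_{2}$: namely, to exhibit $Y\cong Y_{1}\oplus Y_{2}$ together with $\mathfrak{s}$-conflations $Y_{k}\to X_{k}\to N_{k}\dashrightarrow$ with $N_{k}\in\add S_{i}$, from which the inductive hypothesis applied to $Y$ finishes the proof. The crucial input is the brick property of $S_{i}$: since $D_{i}:=\End_{\mathcal{C}}(S_{i})$ is a division ring, the row of morphisms $[g_{1},g_{2}]$ can be manipulated by an automorphism of $X_{1}\oplus X_{2}$ (built from morphisms in $\mathcal{C}(X_{j},X_{k})$, combined with Remark \ref{rem_minmor} to keep $g$ right minimal) so as to arrange $g_{k}=0$ for one index $k$. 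Once $g_{1}=0$, say, the long exact sequence of Proposition \ref{prop_longex} forces the split monomorphism $\iota_{1}\colon X_{1}\hookrightarrow X$ to factor through $Y\to X$, exhibiting $X_{1}$ as a direct summand of $Y$; the inductive hypothesis then gives $X_{1}\in\mathcal{F}(S)$, and Lemma \ref{lem_minmor}(1) peels off this summand to leave a shorter $\mathfrak{s}$-conflation whose middle term is $X_{2}$, also in $\mathcal{F}(S)$ by induction.

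The main obstacle is precisely the reduction to the form $g_{1}=0$ or $g_{2}=0$ in the non-split case: this is where the (semi)brick hypothesis on $S$ is essential, because the change-of-basis procedure on $X_{1}\oplus X_{2}$ genuinely needs the division-ring structure of $\End(S_{i})$ to linearize the target and eliminate one component. In the semibrick case, this step must be performed separately for each indecomposable type $S_{i}$ appearing in the top layer, and compatibility across different $S_{i}$ is enforced by the Hom-vanishing $\mathcal{C}(S_{i},S_{j})=0$ for $i\neq j$, which also ensures the joint decomposition in Lemma \ref{lem_minmor}(2) can be applied to recombine the pieces.
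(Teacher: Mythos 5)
Your overall induction scheme (peel off a top layer $S_i$, reduce to the case $g_1=0$ or $g_2=0$, then use Lemma \ref{lem_minmor} to split the cocone and apply the inductive hypothesis) would work \emph{if} the reduction were available, but the pivotal claim in the non-split case is not justified and is false as stated. An automorphism of $X_1\oplus X_2$ changes $[g_1,g_2]$ into $[g_1a+g_2c,\,g_1b+g_2d]$ with $b\in\mathcal{C}(X_2,X_1)$, $c\in\mathcal{C}(X_1,X_2)$; killing a component therefore requires, e.g., $g_2=-g_1b$, i.e.\ that one component factor through the other via a cross morphism between the two summands. The division-ring structure of $\End_{\mathcal{C}}(S_i)$ only acts on the common target and gives no such factorization. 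Concretely, let $A=\Bbbk\langle x,y\rangle/(x,y)^2$, $S$ the unique simple (a brick), $X_1=A/(y)$, $X_2=A/(x)$ and $g=[\pi_1,\pi_2]:X_1\oplus X_2\to S$ the sum of the two tops; this is a deflation whose cocone is $A\in\mathcal{F}(S)$ and fits into a filtration of minimal length, yet every morphism $X_2\to X_1$ or $X_1\to X_2$ lands in the radical, so $g_1b=0=g_2c$ and no automorphism of $X_1\oplus X_2$ makes either component zero. So the core of your argument — exactly the place where the brick hypothesis is supposed to enter — has a gap; one would instead need a genuinely different device (a careful choice of the top conflation, or an argument with $\mathcal{C}(-,S_i)$ as modules over the division ring), which is what the cited proof supplies. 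A secondary point: the proposition only assumes $\mathcal{C}$ idempotent complete, so your appeals to Krull--Schmidt decompositions (matching indecomposable summands of $X_k$ with those of $Y\oplus S_i$ in the split case, and Remark \ref{rem_minmor} to keep $g$ right minimal) are not available under the stated hypotheses.

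For comparison: the paper itself does not prove this statement at all — it quotes it from \cite[Lemma 5.4(1)]{WWZ21} — so there is no in-paper argument your proof could be matched against; the only closure-under-summands argument actually carried out in the paper (Lemma \ref{lem_extcl}(2), via minimal right approximations) works under a Krull--Schmidt hypothesis and an orthogonality condition, and does not specialize to give this proposition.
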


Now we recall a negative first extension structure on an extriangulated category \cite{AET}.

\begin{definition}
Let $\mathcal{C}$ be an extriangulated category.
A \emph{negative first extension structure} on $\mathcal{C}$ consists of the following data:
\begin{itemize}
\setlength{\itemindent}{20pt}
\item[(NE1)] $\mathbb{E}^{-1}:\mathcal{C}^{\mathrm{op}}\times \mathcal{C}\to \mathcal{A}b$ is an additive bifunctor.
\item[(NE2)] For each $\delta\in \mathbb{E}(C,A)$, there exist two natural transformations
\begin{align}
&\delta_{\sharp}^{-1}: \mathbb{E}^{-1}(-,C)\to \mathcal{C}(-,A),\notag\\
&\delta^{\sharp}_{-1}: \mathbb{E}^{-1}(A,-)\to \mathcal{C}(C,-)\notag
\end{align}
such that, for each $\mathfrak{s}$-conflation
$A\xrightarrow{f}B\xrightarrow{g}C\overset{\delta}{\dashrightarrow}$ and each $W\in \mathcal{C}$, two sequences
\begin{align}
&\mathbb{E}^{-1}(W,A)\xrightarrow{\mathbb{E}^{-1}(W,f)}\mathbb{E}^{-1}(W,B)\xrightarrow{\mathbb{E}^{-1}(W,g)}\mathbb{E}^{-1}(W,C)\xrightarrow{(\delta^{-1}_{\sharp})_{W}}\mathcal{C}(W,A)\xrightarrow{\mathcal{C}(W,f)}\mathcal{C}(W,B),\notag\\
&\mathbb{E}^{-1}(C,W)\xrightarrow{\mathbb{E}^{-1}(g,W)}\mathbb{E}^{-1}(B,W)\xrightarrow{\mathbb{E}^{-1}(f,W)}\mathbb{E}^{-1}(A,W)\xrightarrow{(\delta^{\sharp}_{-1})_{W}}\mathcal{C}(C,W)\xrightarrow{\mathcal{C}(g,W)}\mathcal{C}(B,W)\notag
\end{align}
are exact.
\end{itemize}
Then we call $\mathcal{C}=(\mathcal{C},\mathbb{E},\mathfrak{s},\mathbb{E}^{-1})$ an \emph{extriangulated category with a negative first extension}.
\end{definition}

Note that a negative first extension is a special case of partial $\delta$-functors in the sense of \cite[Definition 4.7]{GNP}.
Typical examples of extriangulated categories are triangulated categories and exact categories (see \cite[Example 2.13]{NP19}), and moreover both categories naturally admit negative first extension structures.
In the rest of this paper, unless otherwise stated, we always regard these categories as extriangulated categories with negative first extensions defined below.

\begin{example}\label{ex_neg}
\begin{itemize}
\item[(1)] A triangulated category $\mathcal{D}$ becomes an extriangulated category with a negative first extension by the following data.
\begin{itemize}
\item[$\bullet$] $\mathbb{E}(C,A):=\mathcal{D}(C,\Sigma A)$ for all $A,C\in \mathcal{D}$, where $\Sigma$ is a shift functor of $\mathcal{D}$.
\item[$\bullet$] For $\delta\in \mathbb{E}(C,A)$, we take a triangle $A\xrightarrow{f}B\xrightarrow{g}C\xrightarrow{\delta}\Sigma A$.
Then we define $\mathfrak{s}(\delta):=[A\xrightarrow{f}B\xrightarrow{g}C]$.
\item[$\bullet$] $\mathbb{E}^{-1}(C,A):=\mathcal{D}(C,\Sigma^{-1}A)$ for all $A,C\in \mathcal{D}$.
\item[$\bullet$] For an $\mathfrak{s}$-conflation $A\xrightarrow{f}B\xrightarrow{g}C\overset{\delta}\dashrightarrow$, we define two natural transformations $\delta_{\sharp}^{-1}$ and $\delta_{-1}^{\sharp}$ as follows: for $W\in \mathcal{D}$,
\begin{align}
&(\delta_{\sharp}^{-1})_{W}:\mathbb{E}^{-1}(W,C)=\mathcal{D}(W,\Sigma^{-1}C)\xrightarrow{\mathcal{D}(W,\Sigma^{-1}\delta)}\mathcal{D}(W,A),\notag\\
&(\delta_{-1}^{\sharp})_{W}:\mathbb{E}^{-1}(A,W)=\mathcal{D}(A,\Sigma^{-1}W)\cong \mathcal{D}(\Sigma A,W)\xrightarrow{\mathcal{D}(\delta,W)}\mathcal{D}(C,W).\notag
\end{align}
\end{itemize}
\item[(2)] An exact category $\mathcal{E}$ becomes an extriangulated category with a negative first extension by the following data.
\begin{itemize}
\item[$\bullet$] $\mathbb{E}(C,A)$ is the set of isomorphism classes of conflations in $\mathcal{E}$ of the form $0 \to A \to B \to C \to 0$ for $A,C \in \mathcal{E}$.
\item[$\bullet$] $\mathfrak{s}$ is the identity.
\item[$\bullet$] $\mathbb{E}^{-1}(C,A)=0$ for all $A,C\in \mathcal{E}$.
\item[$\bullet$] For each $W\in \mathcal{E}$, the maps $(\delta_{\sharp}^{-1})_{W}$ and $(\delta_{-1}^{\sharp})_{W}$ are zero.
\end{itemize}
\end{itemize}
\end{example}

\section{Mixed standardizable sets and mixed stratifying systems}
In this section, we study mixed standardizable sets and mixed stratifying systems in an extriangulated category with a negative first extension.
Let $\mathcal{C}$ be an additive category and $F:\mathcal{C}^{\mathrm{op}}\times \mathcal{C}\to \mathcal{A}b$ a bifunctor.
For a subcategory $\mathcal{X}$ of $\mathcal{C}$, we define a subcategory $\mathcal{X}^{\perp_{F}}$ as
\begin{align}
\mathcal{X}^{\perp_{F}}:=\{ M\in \mathcal{C} \mid F(\mathcal{X}, M)=0 \}.\notag
\end{align}
Dually, we define a subcategory ${}^{\perp_{F}}\mathcal{X}$.

\subsection{Universal extensions}
Let $\Bbbk$ be a field and $\mathcal{C}:=(\mathcal{C},\mathbb{E},\mathfrak{s})$ a $\Bbbk$-linear extriangulated category.
In this subsection, we study properties of universal extensions.
Let $D$ be a division ring and let $M,N$ be objects in $\mathcal{C}$.
Assume that $\mathbb{E}(M,N)\neq 0$ is a finite dimensional right $D$-module and let $d:=\dim_{D}\mathbb{E}(M,N)$.
Take a $D$-basis $\delta_{1},\delta_{2}, \ldots ,\delta_{d}$ of $\mathbb{E}(M,N)$.
Then we have an $\mathfrak{s}$-conflation $N\xrightarrow{f_{i}}E_{i}\xrightarrow{g_{i}}M\overset{\delta_{i}}{\dashrightarrow}$ for each $i \in [1, d]$.
It follows from \cite[Corollary 3.16]{NP19} that $\tiny \left[\begin{smallmatrix}1\\[-1.2mm]\vdots\\1\end{smallmatrix}\right]: M \to M^{\oplus d}$ is an $\mathfrak{s}$-inflation.
Thus, by (ET4)$^{\op}$ in \cite[Definition 2.12]{NP19}, we have a commutative diagram of $\mathfrak{s}$-conflations
\begin{align}
\xymatrix{
N^{\oplus d}\ar[r]^-{f}\ar@{=}[d]&E\ar[r]^-{g}\ar[d]&M\ar@{-->}[r]^{\delta}\ar[d]^-{\tiny\left[\begin{smallmatrix}1\\[-1.2mm]\vdots\\1\end{smallmatrix}\right]}&\;\\
N^{\oplus d}\ar[r]^-{\oplus f_{i}}&\oplus_{i=1}^{d}E_{i}\ar[r]^-{\oplus g_{i}}&M^{\oplus d}\ar@{-->}[r]^{\oplus \delta_{i}}&.
}\notag
\end{align}
Then we call the $\mathfrak{s}$-conflation $N^{\oplus d}\xrightarrow{f} E\xrightarrow{g} M\overset{\delta}\dashrightarrow$ a \emph{$D$-universal extension} (or simply, \emph{universal extension}) \emph{of $M$ by $N$}.
If $d=\dim_{D} \mathbb{E}(M, N)=0$, then we formally call the $\mathfrak{s}$-conflation $(N^{\oplus d}\xrightarrow{f} E\xrightarrow{g} M\overset{\delta}\dashrightarrow):=(0\rightarrow M\xrightarrow{1_{M}} M\overset{0}\dashrightarrow)$ a \emph{$D$-universal extension of $M$ by $N$}.
Note that $d\neq 0$ if and only if $\delta \neq 0$ if and only if $g$ is not an isomorphism.
If $D=\Bbbk$, then $\mathbb{E}(M,N)$ naturally becomes a $D$-module and $\delta_{N}^{\sharp}: \mathcal{C}(N^{\oplus d},N)\to \mathbb{E}(M,N)$ is an epimorphism.
If $D=\End_{\mathcal{C}}(N)^{\op}$ (i.e., $N$ is a brick), then $\mathbb{E}(M,N)$ naturally becomes a $D$-module and
the map $\delta_{N}^{\sharp}$ is an isomorphism.
Dually we define a \emph{$D$-universal coextension of $M$ by $N$}.

For an object $N \in \mathcal{C}$, we put
\begin{align}
D_{N}:=
\begin{cases}
\ \End_{\mathcal{C}}(N)^{\op} &(\textnormal{if $N$ is a brick}),\\
\ \Bbbk &(\textnormal{otherwise}).
\end{cases}\notag
\end{align}
Then $D_{N}$ is a division ring and $\mathbb{E}(M,N)$ is a right $D_{N}$-module for each $M\in\mathcal{C}$.
By $D_{N}$-universal extensions, we obtain infinitely many $\mathfrak{s}$-conflations.

\begin{lemma}\label{lem_brick-uniex}
Let $M,N$ be objects in $\mathcal{C}$ satisfying $\dim_{D_{N}}\mathbb{E}(M\oplus N, N)<\infty$.
Then the following statements hold.
\begin{itemize}
\item[(1)] There exists an infinite sequence of $\mathfrak{s}$-deflations
\begin{align}\label{seq-universal-extension}
\cdots \to P_{s} \xrightarrow{a_{s}} P_{s-1} \to \cdots \to P_{1} \xrightarrow{a_{1}} M
\end{align}
such that, for all $s\ge 1$,
\begin{itemize}
\item[(a)] $N^{\oplus d_{s}} \to P_{s} \xrightarrow{a_{s}}P_{s-1} \dashrightarrow$ is a $D_{N}$-universal extension of $P_{s-1}$ by $N$, where $d_{s}:=\dim_{D_{N}}\mathbb{E}(P_{s-1}, N)$ and $P_{0}:=M$,
\item[(b)] $\alpha_{s}:=a_{1}  \cdots  a_{s}$ is an $\mathfrak{s}$-deflation and $K_{s}:= \cocone(\alpha_{s}) \in \mathcal{F}(N)$,
\item[(c)] the map $(\eta_{s})_{N}^{\sharp}:\mathcal{C}(K_{s}, N) \to \mathbb{E}(M,N)$ is an epimorphism as $D_{N}$-modules, where $\mathfrak{s}(\eta_{s})=[ K_{s} \to P_{s} \xrightarrow{\alpha_{s}} M ]$.
If $N$ is a brick, then the map $(\eta_{s})_{N}^{\sharp}$ is an isomorphism as $D_{N}$-modules.
\end{itemize}
\item [(2)] Moreover, if there exists an integer $s\ge1$ such that $a_{s}$ is an isomorphism, then we have the  $\mathfrak{s}$-conflation
\begin{align}
K_{s}\to P_{s}\xrightarrow{\alpha_{s}} M\overset{\eta_{s}}\dashrightarrow \notag
\end{align}
such that $K_{s}\in\mathcal{F}(N)$, $P_{s}\in {}^{\perp_{\mathbb{E}}}\mathcal{F}(N)$ and $\alpha_{s}$ is a right ${}^{\perp_{\mathbb{E}}}\mathcal{F}(N)$-approximation of $M$.
In this case, $a_{t}$ is an isomorphism for all $t \ge s +1$.
\end{itemize}
\end{lemma}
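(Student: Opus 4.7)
The plan is to build the sequence inductively via successive $D_N$-universal extensions, identify $K_s$ through the octahedral-type axiom (ET4)$^{\op}$, and then read off (c) from the long exact sequences of Proposition \ref{prop_longex}. I would set $P_0:=M$ and inductively take a $D_N$-universal extension
\[ N^{\oplus d_s}\xrightarrow{f_s} P_s \xrightarrow{a_s} P_{s-1} \overset{\delta_s}\dashrightarrow \]
with $d_s := \dim_{D_N}\mathbb{E}(P_{s-1},N)$. The hypothesis $\dim_{D_N}\mathbb{E}(M\oplus N,N)<\infty$, together with the long exact sequence of $\mathbb{E}(-,N)$ applied to the previous universal extension, gives
\[ \dim_{D_N}\mathbb{E}(P_s,N) \le \dim_{D_N}\mathbb{E}(P_{s-1},N) + d_s\cdot\dim_{D_N}\mathbb{E}(N,N), \]
so each $d_s$ is finite and the construction proceeds; this produces (a). Since a composition of $\mathfrak{s}$-deflations is an $\mathfrak{s}$-deflation, $\alpha_s$ is an $\mathfrak{s}$-deflation, and applying (ET4)$^{\op}$ to $\alpha_{s-1}$ and $a_s$ yields an $\mathfrak{s}$-conflation $N^{\oplus d_s} \to K_s \to K_{s-1}\dashrightarrow$; induction on $s$ then gives $K_s \in \mathcal{F}(N)$, establishing (b).

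The crucial step is (c). By the defining property of $D_N$-universal extensions, each $(\delta_s)_N^{\sharp}\colon \mathcal{C}(N^{\oplus d_s},N) \to \mathbb{E}(P_{s-1},N)$ is surjective, and an isomorphism when $N$ is a brick. Applied to $s=1$, the long exact sequence of $\mathbb{E}(-,N)$ forces $\mathbb{E}(a_1,N)=0$, hence $\mathbb{E}(\alpha_s,N)=0$ for all $s$; the long exact sequence attached to $K_s \xrightarrow{i_s} P_s \xrightarrow{\alpha_s} M \overset{\eta_s}\dashrightarrow$ then yields the surjectivity of $(\eta_s)_N^{\sharp}$. When $N$ is a brick, every $(\delta_t)_N^{\sharp}$ is an isomorphism, so $\mathcal{C}(f_t,N)=0$ and hence each $\mathcal{C}(a_t,N)$ is surjective; composing along the tower makes $\mathcal{C}(\alpha_s,N)\colon \mathcal{C}(M,N)\to\mathcal{C}(P_s,N)$ surjective. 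Combined with $\alpha_s i_s=0$ and exactness of the same long exact sequence, this forces $\mathcal{C}(i_s,N)=0$, so $(\eta_s)_N^{\sharp}$ is also injective, hence an isomorphism.

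For (2), if $a_s$ is an isomorphism then $d_s=0$ by the note $d=0 \iff g$ is an isomorphism, so $\mathbb{E}(P_{s-1},N)=0$; since $P_s\cong P_{s-1}$, this propagates to give that $a_t$ is an isomorphism for all $t\ge s+1$. Moreover $\mathbb{E}(P_s,N)=0$, and induction on the filtration length of $X\in\mathcal{F}(N)$ using the long exact sequence for $\mathbb{E}(P_s,-)$ yields $\mathbb{E}(P_s,X)=0$, hence $P_s \in {}^{\perp_{\mathbb{E}}}\mathcal{F}(N)$. Finally, for any $Y \in {}^{\perp_{\mathbb{E}}}\mathcal{F}(N)$, applying $\mathcal{C}(Y,-)$ to $K_s \to P_s \xrightarrow{\alpha_s} M \overset{\eta_s}\dashrightarrow$ makes $\mathcal{C}(Y,\alpha_s)$ surjective since $\mathbb{E}(Y,K_s)=0$, so $\alpha_s$ is a right ${}^{\perp_{\mathbb{E}}}\mathcal{F}(N)$-approximation of $M$.

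The main obstacle I expect is the bookkeeping in (c): carefully checking that $(\delta_s)_N^{\sharp}$ is surjective in general and bijective in the brick case as a map of $D_N$-modules, so that the vanishings $\mathbb{E}(a_s,N)=0$ and $\mathcal{C}(f_s,N)=0$ can legitimately be chained along the tower; and confirming that the additivity of $\mathfrak{s}$ lets the relevant long exact sequences be read off componentwise for the direct sums $N^{\oplus d_s}$. Once those points are settled, the rest is a routine diagram-chase via Proposition \ref{prop_longex} and the extriangulated axioms.
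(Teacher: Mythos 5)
Your proposal is correct, and the construction itself (inductive $D_N$-universal extensions, finiteness of $d_s$ via the long exact sequence, $(\mathrm{ET4})^{\op}$ producing the conflation $N^{\oplus d_s}\to K_s\to K_{s-1}\dashrightarrow$, and the argument for (2)) coincides with the paper's. Where you genuinely diverge is in part (c): the paper proves it by induction on $s$, comparing the conflations for $\eta_{s-1}$ and $\eta_s$ through the morphism $\varphi_s\colon K_s\to K_{s-1}$, extracting the relation $(\eta_s)^{\sharp}_N\circ\mathcal{C}(\varphi_s,N)=(\eta_{s-1})^{\sharp}_N$ from a commutative diagram of long exact sequences, and in the brick case showing that $\mathcal{C}(\varphi_s,N)$ is an isomorphism; you instead argue directly at each $s$ along the tower, getting surjectivity from $\mathbb{E}(a_1,N)=0$ (forced by surjectivity of $(\delta_1)^{\sharp}_N$), hence $\mathbb{E}(\alpha_s,N)=0$, and in the brick case getting injectivity from the vanishing $\mathcal{C}(f_t,N)=0$, which makes each $\mathcal{C}(a_t,N)$, and hence $\mathcal{C}(\alpha_s,N)$, surjective, forcing $\mathcal{C}(i_s,N)=0$. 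Your route avoids the two-layer diagram chase and the explicit use of $\varphi_s$, and is arguably more transparent; the paper's inductive formulation has the mild advantage of isolating the comparison isomorphism $\mathcal{C}(\varphi_s,N)$, which it reuses verbatim in the proof of Proposition \ref{prop_brick-univex}(ii). The properties you flag as the "main obstacle" — $(\delta_s)^{\sharp}_N$ epi in general and iso for a brick, and componentwise additivity over $N^{\oplus d_s}$ — are exactly what the paper's discussion of universal extensions in Subsection 3.1 records, so no gap remains.
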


\begin{proof}
For simplicity, let $D:=D_{N}$.

(1) We use induction on $s$.
Assume $s=1$. Let $d_{1}:= \dim_{D} \mathbb{E}(M,N)$.
Then a $D$-universal extension $N^{\oplus d_{1}} \to P_{1} \xrightarrow{a_{1}} M \dashrightarrow$ satisfies the conditions (a), (b) and (c).
Assume $s \ge 2$.
By the induction hypothesis, there exists a sequence of $\mathfrak{s}$-deflations
\begin{align}\label{pre-epi-seq}
P_{s-1} \xrightarrow{a_{s-1}} \cdots \to P_{1} \xrightarrow{a_{1}} M
\end{align}
which satisfies the conditions (a), (b) and (c).
Since $K_{s-1} \in \mathcal{F}(N)$ and $\dim_{D}\mathbb{E}(N, N) < \infty$, we obtain $\dim_{D}\mathbb{E}(K_{s-1}, N) < \infty$ by Proposition \ref{prop_longex}.
Applying $\mathbb{E}(-, N)$ to the $\mathfrak{s}$-conflation $K_{s-1} \to P_{s-1} \xrightarrow{\alpha_{s-1}} M \overset{\eta_{s-1}}{\dashrightarrow}$, we have $\dim_{D}\mathbb{E}(P_{s-1}, N) < \infty$.
Let $d_{s}:=\dim_{D} \mathbb{E}(P_{s-1}, N)$.
Then we have a $D$-universal extension  $N^{\oplus d_{s}} \to P_{s} \xrightarrow{a_{s}} P_{s-1} \overset{\delta_{s}}\dashrightarrow$.
Thus (a) holds.
Applying (ET4)$^{\op}$ in \cite[Definition 2.12]{NP19} to two $\mathfrak{s}$-conflations above induces a commutative diagram of $\mathfrak{s}$-conflations
\begin{align}
\xymatrix{
N^{\oplus d_{s}} \ar[r] \ar@{=}[d] &K_{s} \ar[r]^{\varphi_{s}}\ar[d]&K_{s-1} \ar[d] \ar@{-->}[r]^{\delta'_{s}}&\\
N^{\oplus d_{s}} \ar[r]&P_{s}\ar[r]^-{a_{s}}\ar[d]^-{\alpha_{s}}&P_{s-1} \ar@{-->}[r]^{\delta_{s}}\ar[d]^-{\alpha_{s-1}}&.\\
&M\ar@{=}[r]\ar@{-->}[d]^{\eta_{s}} &M \ar@{-->}[d]^{\eta_{s-1}}&&\\
&&&\notag
}
\end{align}
Since $\mathcal{F}(N)$ is extension-closed, we obtain $K_{s}=\cocone (\alpha_{s}) \in \mathcal{F}(N)$.
Hence (b) holds.
Applying $\mathcal{C}(-,N)$ to the commutative diagram above, we have a commutative diagram
\begin{align}
\xymatrix{
&&&\mathcal{C}(K_{s-1}, N)\ar[d]^-{(\eta_{s-1})_{N}^{\sharp}}&\\
&&&\mathbb{E}(M, N)\ar[d]&\\
\mathcal{C}(P_{s-1}, N)\ar[r]^-{\mathcal{C}(a_{s},N)}\ar[d]&\mathcal{C}(P_{s}, N)\ar[r]\ar[d]&\mathcal{C}(N^{\oplus d_{s}}, N)\ar[r]^-{(\delta_{s})^{\sharp}_{N}}\ar@{=}[d]&\mathbb{E}(P_{s-1}, N)\ar[d]&\\
\mathcal{C}(K_{s-1}, N)\ar[r]^-{\mathcal{C}(\varphi_{s},N)}\ar[d]^-{(\eta_{s-1})_{N}^{\sharp}}&\mathcal{C}(K_{s},N)\ar[r]\ar[d]^-{(\eta_{s})_{N}^{\sharp}}&\mathcal{C}(N^{\oplus d_{s}}, N)\ar[r]^-{(\delta'_{s})^{\sharp}_{N}}&\mathbb{E}(K_{s-1}, N). &\\
\mathbb{E}(M,N)\ar@{=}[r]&\mathbb{E}(M,N)&&&\notag
}
\end{align}
Thus we obtain $(\eta_{s})^{\sharp}_{N} \circ\mathcal{C}(\varphi_{s},N)=(\eta_{s-1})^{\sharp}_{N}$.
Since the induction hypothesis implies that  $(\eta_{s-1})^{\sharp}_{N}$ is an epimorphism, so is $(\eta_{s})^{\sharp}_{N}$.
Moreover, we show that if $N$ is a brick, then $(\eta_{s})_{N}^{\sharp}$ is an isomorphism.
By a property of $D$-universal extensions, $(\delta_{s})^{\sharp}_{N}$ is an isomorphism.
Since the induction hypothesis implies that $(\eta_{s-1})_{N}^{\sharp}$ is also an isomorphism, $(\delta'_{s})_{N}^{\sharp}$ and $\mathcal{C}(\varphi_{s}, N)$ are monomorphisms.
This implies that $\mathcal{C}(\varphi_{s}, N)$ is an isomorphism, and hence so is $(\eta_{s})_{N}^{\sharp}$.
Thus (c) holds.

(2) By a property of $D$-universal extensions, $a_{s}$ is an isomorphism if and only if $d_{s}=0$.
Since $d_{s}=\dim_{D}\mathbb{E}(P_{s-1},N)=0$, we have $P_{s} \cong P_{s-1} \in {}^{\perp_{\mathbb{E}}}(\add N) = {}^{\perp_{\mathbb{E}}} \mathcal{F}(N)$.
We show that $\alpha_{s}$ is a right ${}^{\perp_{\mathbb{E}}}\mathcal{F}(N)$-approximation of $M$.
Let $X \in {}^{\perp_{\mathbb{E}}} \mathcal{F}(N)$.
Applying $\mathcal{C}(X, -)$ to the $\mathfrak{s}$-conflation $K_{s}\to P_{s}\xrightarrow{\alpha_{s}} M\dashrightarrow$ induces an exact sequence
\begin{align}
\mathcal{C}(X, P_{s}) \xrightarrow{\mathcal{C}(X, \alpha_{s})} \mathcal{C} (X, M) \to \mathbb{E}(X, K_{s}).\notag
\end{align}
By $K_{s} \in \mathcal{F}(N)$, the right-hand side vanishes.
Hence we have the assertion.
\end{proof}

We give a remark for the condition in Lemma \ref{lem_brick-uniex}(2).
In general, there does not necessarily exist an integer $s$ such that $a_{s}$ in \eqref{seq-universal-extension} is an isomorphism.
Indeed, we assume that $\Bbbk$ is an algebraically closed field and
let $A=\Bbbk(\xymatrix{1\ar@<0.5ex>[r]\ar@<-0.5ex>[r]&2})$ be the path algebra of the Kronecker quiver.
For $A$-modules $M=N=(\xymatrix{\Bbbk\ar@<0.5ex>[r]^{1_{\Bbbk}}\ar@<-0.5ex>[r]_{0}&\Bbbk})$, there exists no integer $s$ such that $a_{s}$ in \eqref{seq-universal-extension} is an isomorphism.

Now we introduce the following notion.

\begin{definition}
Let $N$ be an indecomposable object in $\mathcal{C}$.
Let $\mathcal{E}$ be an extension-closed subcategory of $\mathcal{C}$ which contains $N$ and satisfies $\dim_{D_{N}}\mathbb{E}(M,N) < \infty$ for each $M \in \mathcal{E}$.
We say that $\mathcal{E}$ \emph{admits a finite sequence of universal extensions by $N$} if for each $M\in \mathcal{E}$, there exists an integer $s\ge1$ such that $a_{s}$ in \eqref{seq-universal-extension} is an isomorphism.
Dually, we define the notion of a \emph{finite sequence of universal coextensions}.
\end{definition}

If $\mathcal{E}$ admits a finite sequence of universal extensions by $N$, then for each $M\in\mathcal{E}$, there exists an $\mathfrak{s}$-conflation $K\to P\xrightarrow{\alpha} M\dashrightarrow$ with $K\in \mathcal{F}(N)$, $P\in {}^{\perp_{\mathbb{E}}}\mathcal{F}(N)\cap \mathcal{E}$ and $\alpha$ is a right ${}^{\perp_{\mathbb{E}}}\mathcal{F}(N)$-approximation of $M$.

We give sufficient conditions that $\mathcal{E}$ admits a finite sequence of universal extensions by an indecomposable object $N$. 
An object $N$ is called a stone if it is indecomposable and $\mathbb{E}(N,N)=0$. 

\begin{proposition}\label{prop_stone-univex}
Let $N$ be a stone in $\mathcal{C}$.
Let $\mathcal{E}$ be an extension-closed subcategory of $\mathcal{C}$ which contains $N$ and satisfies $\dim_{D_{N}}\mathbb{E}(M,N) < \infty$ for each $M \in \mathcal{E}$.
Then $\mathcal{E}$ admits a finite sequence of universal extensions by $N$.
\end{proposition}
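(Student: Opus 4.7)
The plan is to show that the sequence of universal extensions in Lemma~\ref{lem_brick-uniex} already terminates after a single step, i.e.\ that the morphism $a_{2}$ is an isomorphism, by exploiting the stone condition $\mathbb{E}(N,N)=0$.

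Concretely, given $M\in\mathcal{E}$, I would form the $D_{N}$-universal extension
\[
N^{\oplus d_{1}}\to P_{1}\xrightarrow{a_{1}} M \overset{\eta_{1}}{\dashrightarrow}
\]
of $M$ by $N$, with $d_{1}=\dim_{D_{N}}\mathbb{E}(M,N)$. Since $\mathcal{E}$ is extension-closed and contains both $N$ and $M$, the object $P_{1}$ lies in $\mathcal{E}$, so the hypotheses of Lemma~\ref{lem_brick-uniex} propagate to the next step. Applying $\mathbb{E}(-,N)$ and invoking Proposition~\ref{prop_longex}, I obtain the exact sequence
\[
\mathcal{C}(N^{\oplus d_{1}},N)\xrightarrow{(\eta_{1})^{\sharp}_{N}} \mathbb{E}(M,N)\to \mathbb{E}(P_{1},N)\to \mathbb{E}(N^{\oplus d_{1}},N).
\]

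Two facts already recorded in Subsection~3.1 finish the argument. First, the rightmost term vanishes: since $N$ is a stone, $\mathbb{E}(N,N)=0$, and additivity of $\mathbb{E}$ gives $\mathbb{E}(N^{\oplus d_{1}},N)\cong \mathbb{E}(N,N)^{d_{1}}=0$. Second, $(\eta_{1})^{\sharp}_{N}$ is an epimorphism by the very definition of a $D_{N}$-universal extension (uniformly, whether $N$ is a brick or $D_{N}=\Bbbk$). Exactness of the displayed sequence then forces $\mathbb{E}(P_{1},N)=0$.

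Consequently, $d_{2}=\dim_{D_{N}}\mathbb{E}(P_{1},N)=0$, so the second universal extension degenerates to $0\to P_{2}\xrightarrow{a_{2}} P_{1}\overset{0}{\dashrightarrow}$ and $a_{2}$ is an isomorphism, which is precisely the termination required by the definition. I do not anticipate any real obstacle; the essential content is the one-step observation that the stone condition, combined with the built-in surjectivity of the connecting map for a $D_{N}$-universal extension, kills $\mathbb{E}(-,N)$ after a single application.
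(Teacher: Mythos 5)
Your proposal is correct and is essentially the paper's own argument: apply $\mathcal{C}(-,N)$ to the $D_{N}$-universal extension, use $\mathbb{E}(N^{\oplus d_{1}},N)=0$ (stone condition) together with the surjectivity of the connecting map $(\delta_{1}^{\sharp})_{N}$ to conclude $\mathbb{E}(P_{1},N)=0$, hence $d_{2}=0$ and $a_{2}$ is an isomorphism. No substantive differences.
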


\begin{proof}
Let $M \in \mathcal{E}$ and $d_{1}:=\dim_{D_{N}}\mathbb{E}(M, N)<\infty$.
Applying $\mathcal{C}(-,N)$ to a $D_{N}$-universal extension $N^{\oplus d_{1}}\to P_{1}\xrightarrow{a_{1}} M \overset{\delta_{1}}\dashrightarrow$ induces an exact sequence
\begin{align}
\mathcal{C}(N^{\oplus d_{1}},N) \xrightarrow{(\delta_{1}^{\sharp})_{N}}\mathbb{E}(M,N)\to\mathbb{E}(P_{1},N)\to\mathbb{E}(N^{\oplus d_{1}},N)=0,  \notag
\end{align}
where the last equality follows from the assumption that $N$ is a stone.
Since $(\delta_{1}^{\sharp})_{N}$ is an epimorphism, we have $\mathbb{E}(P_{1},N)=0$.
By $d_{2}:=\dim_{D_{N}}\mathbb{E}(P_{1},N)=0$, the morphism $a_{2}$ in \eqref{seq-universal-extension} is an isomorphism.
Thus $\mathcal{E}$ admits a finite sequence of universal extensions by $N$.
\end{proof}

Let $A$ be a finite dimensional $\Bbbk$-algebra and let $\mod A$ denote the category of finitely generated right $A$-modules.
For module categories, we have the following result.

\begin{proposition}\label{prop_brick-univex}
Let $N$ be a brick in $\mod A$ and $\mathcal{E}$ an extension-closed subcategory of $\mod A$ containing $N$.
Then $\mathcal{E}$ admits a finite sequence of universal extensions by $N$ if one of the following two conditions is satisfied.
\begin{itemize}
\item[(i)] There exists an integer $l>0$ such that for each indecomposable $A$-module in $\mathcal{E}$, its length is bounded by $l$.
\item[(ii)] $N$ satisfies $\Ext_{A}^{1}(N,\rad N)=0$.
\end{itemize}
\end{proposition}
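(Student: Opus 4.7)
My plan is to show, in both cases, that the universal extension sequence from Lemma \ref{lem_brick-uniex}(1) stabilizes, i.e.\ that $d_s := \dim_{D_N} \Ext_A^1(P_{s-1}, N) = 0$ for some $s \geq 1$. The common starting point is: applying $\Hom_A(-,N)$ to the short exact sequence $0 \to N^{\oplus d_s} \to P_s \to P_{s-1} \to 0$ and using that $N$ is a brick so that the connecting map $\Hom_A(N^{\oplus d_s}, N) \to \Ext_A^1(P_{s-1}, N)$ is an isomorphism of $D_N$-modules (by the universal property of the extension, as in Lemma \ref{lem_brick-uniex}(1)(c)), one obtains $\Hom_A(P_s, N) \cong \Hom_A(M, N)$ independent of $s$, and an injection $\Ext_A^1(P_s, N) \hookrightarrow \Ext_A^1(N, N)^{d_s}$ coming from the higher-Ext long exact sequence in $\mod A$.

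For case (i), I argue by contradiction. If $d_s \geq 1$ for every $s$, then $\dim_k P_s = \dim_k M + (\sum_{i \leq s} d_i) \dim_k N$ is unbounded, yet every indecomposable summand of $P_s \in \mathcal{E}$ has length at most $l$ and hence $k$-dimension bounded by $l \cdot \max\{\dim_k S : S \text{ simple}\}$, so the number of indecomposable summands of $P_s$ grows without bound. Since $\dim_k \Hom_A(P_s, N)$ is constant, all but boundedly many of these summands $X$ satisfy $\Hom_A(X, N) = 0$. A pigeonhole argument on the (controlled family of) isomorphism classes of bounded-length indecomposable modules in $\mathcal{E}$, combined with the rigid structure imposed by the submodule $N^{\oplus d_s} \hookrightarrow P_s$, will yield the contradiction.

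For case (ii), I first extract the structural consequence that $\top N$ is simple. Applying $\Hom_A(N,-)$ to $0 \to \rad N \to N \to \top N \to 0$ and using $\Ext_A^1(N, \rad N) = 0$ gives a surjective ring homomorphism $D_N \twoheadrightarrow \End_A(\top N)$; since $D_N$ is a division ring and $\End_A(\top N) \neq 0$, this is an isomorphism, so $\End_A(\top N)$ is a division ring, forcing $\top N = S$ to be simple. Since each $a_s$ is surjective, $\top P_s$ is a quotient of $\top M$ and stabilizes for large $s$, while $\dim_k P_s$ grows, so the Loewy length of $P_s$ grows. I would then combine the radical filtration of $N$ with the vanishing $\Ext_A^1(N, \rad N) = 0$ to propagate the extension-controlling condition iteratively, showing that once the Loewy length of $P_s$ exceeds that of $N$ the injection $\Ext_A^1(P_s, N) \hookrightarrow \Ext_A^1(N, N)^{d_s}$ has zero image, whence termination.

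The main obstacle in case (ii) is making the propagation of the single vanishing $\Ext_A^1(N, \rad N) = 0$ through the iterated universal extensions into a genuinely quantitative termination statement, tracking how the growing Loewy series of $P_s$ interacts with its stabilized top; in case (i) the technical difficulty is making the pigeonhole/parametrization argument rigorous over an arbitrary ground field, where isomorphism classes of bounded-length indecomposables need not form a finite set.
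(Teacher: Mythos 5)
Both halves of your argument stop short of a proof, and the missing idea is the same in each: the paper does not work with the modules $P_{s}$ at all, but with the cocones $K_{s}=\cocone(\alpha_{s})$, which lie in $\mathcal{F}(N)$ and satisfy $\ell(K_{s})<\ell(K_{s+1})$ as long as no $a_{s}$ is an isomorphism; the whole point is then to bound $\ell(K_{s})$. In case (i) your pigeonhole plan on isomorphism classes of bounded-length indecomposables is exactly the step you cannot carry out over an arbitrary field (and you say so yourself); it is also unnecessary. Since $N$ is a brick, $\mathcal{F}(N)$ is closed under direct summands (Proposition \ref{prop_sbrick-dirsummand}), every indecomposable summand $K_{s}^{j}$ of $K_{s}$ lies in $\mathcal{F}(N)$ and hence has $\Hom_{A}(K_{s}^{j},N)\neq 0$, and Lemma \ref{lem_brick-uniex}(1)(c) gives $\dim_{D_{N}}\Hom_{A}(K_{s},N)=\dim_{D_{N}}\Ext_{A}^{1}(M,N)=d_{1}$ for every $s$; so $K_{s}$ has at most $d_{1}$ indecomposable summands, each of length at most $l$, giving $\ell(K_{s})\le l d_{1}$, a contradiction. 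Your constancy of $\Hom_{A}(P_{s},N)$ and the injection $\Ext_{A}^{1}(P_{s},N)\hookrightarrow\Ext_{A}^{1}(N,N)^{\oplus d_{s}}$ are correct but do not control the summands of $P_{s}$ that do not map to $N$, which is precisely where your sketch breaks off.

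In case (ii) your reduction to the simplicity of $\top N$ is correct but beside the point, and the subsequent steps contain errors: from the surjections $\alpha_{s}\colon P_{s}\to M$ you get that $\top M$ is a quotient of $\top P_{s}$, not the reverse, so ``$\top P_{s}$ is a quotient of $\top M$ and stabilizes'' is unjustified, and the claim that $\Ext_{A}^{1}(P_{s},N)\hookrightarrow\Ext_{A}^{1}(N,N)^{\oplus d_{s}}$ has zero image once the Loewy length of $P_{s}$ exceeds that of $N$ has no visible proof (nothing forces $\Ext_{A}^{1}(P_{s},N)=0$ on Loewy-length grounds; note $\Ext_{A}^{1}(N,N)$ may be nonzero here). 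The paper's route again goes through $K_{s}$: since $K_{s}\in\mathcal{F}(N)$ and $\Ext_{A}^{1}(N,\rad N)=0$, one has $\Ext_{A}^{1}(K_{s},\rad N)=0$, and the isomorphisms $\Hom_{A}(\varphi_{s+1},N)$ coming from the proof of Lemma \ref{lem_brick-uniex}(1) then propagate to isomorphisms $\Hom_{A}(\varphi_{s+1},\top N)$ and hence $\Hom_{A}(\varphi_{s+1},A/\rad A)$, which shows $\top K_{s}\cong\top K_{s+1}$ for all $s$; consequently all $K_{s}$ share the projective cover of $K_{1}$ and $\ell(K_{s})$ is bounded by its length, contradicting the strict growth. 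Without passing to the $\mathcal{F}(N)$-filtered cocones, neither of your two cases closes.
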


\begin{proof}
Regarding $\mathcal{C}=\mod A$ as the extriangulated category by Example \ref{ex_neg}(2), we use the notation in the proof of Lemma \ref{lem_brick-uniex}(1).
For both conditions (i) and (ii), we show that there exists $s \geq 1$ such that $a_{s}$ is an isomorphism.
Suppose to the contrary that $a_{k}$ is not an isomorphism for each $k\ge 1$.
Since $d_{k+1}=\dim_{D_{N}}\Ext_{A}^{1}(P_{k},N) \neq 0$, we have an exact sequence $0 \to N^{\oplus d_{k+1}} \to K_{k+1} \xrightarrow{\varphi_{k+1}} K_{k} \to 0$.
Hence $\ell(K_{k})<\ell(K_{k+1})$ holds, where $\ell(X)$ is the length of a module $X$.
Thus we obtain
\begin{align}
0< \ell(K_{1})< \ell(K_{2})<\cdots<\ell(K_{t})< \cdots. \notag
\end{align}
In the following, for the conditions (i) and (ii), we show that there exists $\ell$ such that $\ell(K_{t})\le\ell$ for each $t\ge 1$.
Then this induces a contradiction.

(i) We decompose $K_{t}$ as $K_{t}=\oplus_{j=1}^{n_{t}}K_{t}^{j}$, where each $K_{t}^{j}$ is indecomposable.
By Proposition \ref{prop_sbrick-dirsummand}, we have $K_{t}^{j} \in \mathcal{F}(N)\subseteq\mathcal{E}$, and hence $\Hom_{A}(K_{t}^{j},N)\neq 0$ for each $j\in [1,n_{t}]$.
Then
\begin{align}
d_{1}&=\dim_{D_{N}}\Ext_{A}^{1}(M,N)\underset{\textnormal{Lemma\;} \ref{lem_brick-uniex}\textnormal{(c)}}=\dim_{D_{N}}\Hom_{A}(K_{t}, N)\notag\\
&=\sum_{j=1}^{n_{t}}\dim_{D_{N}}\Hom_{A}(K_{t}^{j},N)\ge n_{t} \neq 0.\notag
\end{align}
On the other hand, since $K_{t}^{j}$ is indecomposable, the inequality $\ell(K_{t}^{j})\le l$ holds by the assumption (i).
Therefore we obtain
\begin{align}
\ell(K_{t})=\sum_{j=1}^{n_{t}}\ell(K_{t}^{j})\le l n_{t}\le l d_{1}=:\ell. \notag
\end{align}

(ii) We show $\top K_{t}\cong \top K_{t+1}$ for each $t \ge1$.
By Yoneda's lemma, it is enough to show that $\Hom_{A}(\top \varphi_{t+1}, A/\rad A): \Hom_{A}(\top K_{t}, A/\rad A) \to \Hom_{A}(\top K_{t+1}, A/\rad A)$ is an isomorphism.
Assume $\Ext_{A}^{1}(N,\rad N)=0$.
By $K_{t}, K_{t+1} \in \mathcal{F}(N)$, we have $\Ext_{A}^{1}(K_{t},\rad N)=0=\Ext_{A}^{1}(K_{t+1},\rad N)$. Applying $\Hom(\varphi_{t+1},-)$ to an exact sequence $0 \to \rad N \to N \to \top N \to 0$ induces a commutative diagram
\begin{align}
\xymatrix{
0\ar[d]&0\ar[d]&\\
\Hom_{A}(K_{t}, N)\ar[r]\ar[d]^-{\Hom(\varphi_{t+1},N)}&\Hom_{A}(K_{t}, \top N)\ar[r]\ar[d]^-{\Hom(\varphi_{t+1},\top N)}&\Ext_{A}^{1}(K_{t}, \rad N)=0\\
\Hom_{A}(K_{t+1}, N)\ar[r]&\Hom_{A}(K_{t+1}, \top N)\ar[r]&\Ext_{A}^{1}(K_{t+1}, \rad N)=0.
}\notag
\end{align}
Since $\Hom(\varphi_{t+1},N)$ is an isomorphism by the proof of Lemma \ref{lem_brick-uniex}(1), so is $\Hom(\varphi_{t+1},\top N)$.
For each simple $A$-module $S\notin \add(\top N)$, we have $\Hom_{A}(K_{t},S)=0=\Hom_{A}(K_{t+1},S)$.
Thus $\Hom_{A}(\varphi_{t+1}, A/\rad A): \Hom_{A}(K_{t}, A/\rad A) \to \Hom_{A}(K_{t+1}, A/\rad A)$ is an isomorphism.
By applying $\Hom_{A}(-, A/\rad A)$ to the exact sequence $0\to \rad K_{i} \xrightarrow{\iota_{i}} K_{i} \xrightarrow{\pi_{i}} \top K_{i} \to 0$ for $i=t, t+1$, there exists an exact sequence
\begin{align}
0 \to \Hom_{A}(\top K_{i}, A/\rad A) \to \Hom_{A}(K_{i}, A/\rad A) \xrightarrow{\Hom(\iota_{i},A/\rad A)} \Hom_{A}(\rad K_{i}, A/\rad A). \notag
\end{align}
Since $\Hom(\iota_{i},A/\rad A)=0$ holds, the map $\Hom(\pi_{i},A/\rad A)$ is an isomorphism.
Thus the commutative relation
\begin{align}
\Hom(\pi_{t+1},A/\rad A)\circ\Hom(\top \varphi_{t+1},A/\rad A)=\Hom(\varphi_{t+1},A/\rad A)\circ\Hom(\pi_{t},A/\rad A) \notag
\end{align}
implies that $\Hom(\top\varphi_{t+1}, A/\rad A)$ is also an isomorphism.
Hence $\top\varphi_{t+1}:\top K_{t}\to \top K_{t+1}$ is an isomorphism for each $t\ge1$.
Let $P$ be a projective cover of $K_{1}$.
Since $P$ is also a projective cover of $K_{t}$, we have $\ell(K_{t}) \le \ell(P)=:\ell$.
The proof is complete.
\end{proof}

As will be explained in Proposition \ref{prop_mixstd}, each proper standard module satisfies the condition in Proposition \ref{prop_brick-univex}(ii).
The following examples show that the conditions (i) and (ii) in Proposition \ref{prop_brick-univex} are independent.

\begin{example}
Assume that $\Bbbk$ is an algebraically closed field.
\begin{itemize}
\item[(1)] Let $A$ be the $\Bbbk$-algebra defined by the quiver
\begin{align}
\xymatrix{1\ar@(lu,ld)_-{a}&2\ar[l]_-{b}}\notag
\end{align}
with a relation $a^{2}$. Then $\mod A$ has exactly 7 indecomposable modules (up to isomorphisms), and hence it satisfies the condition (i) in Proposition \ref{prop_brick-univex}.
On the other hand, $N:=e_{2}A/\soc e_{2}A$ does not satisfy $\Ext_{A}^{1}(N,\rad N)= 0$ since there exists a non-split exact sequence $0\to \rad N \to e_{2}A\to N\to  0$.
\item[(2)] Let $A$ be the radical square zero $\Bbbk$-algebra defined by the quiver
\begin{align}
\xymatrix{1\ar@(lu,ld)\ar@(ru,rd)}\notag
\end{align}
and $N:=\top e_{1}A$.
Then clearly $\Ext_{A}^{1}(N,\rad N)=0$ but $\Ext_{A}^{1}(N,N)\neq 0$.
On the other hand, by \cite[Theorem X.2.6]{ARS95}, $\mathcal{F}(N)=\mod A$ is representation-infinite.
The first Brauer--Thrall conjecture implies that $\mathcal{F}(N)$ does not satisfy the condition (i) in Proposition \ref{prop_brick-univex}.
\end{itemize}
\end{example}

When $N$ is a brick and stone, we compare two universal extensions: one is a $\Bbbk$-universal extension, and the other is a $D_{N}$-universal extension.
Note $\dim_{D_{N}}\mathbb{E}(M,N)\leq \dim_{\Bbbk}\mathbb{E}(M,N)$.

\begin{remark}
Assume that $\mathcal{C}$ is a Krull--Schmidt category.
Let $N$ be a brick and stone.
Let $M\in \mathcal{C}$ be an object satisfying $\dim_{\Bbbk}\mathbb{E}(M,N)<\infty$.
By taking a $\Bbbk$-universal extension and Remark \ref{rem_minmor}, we have an $\mathfrak{s}$-conflation
\begin{align}
N^{\oplus d}\to P\xrightarrow{\alpha}M\dashrightarrow, \notag
\end{align}
where $d\le \dim_{\Bbbk}\mathbb{E}(M,N)$ and $\alpha$ is right minimal.
On the other hand, by taking a $D_{N}$-universal extension and Remark \ref{rem_minmor}, we have an $\mathfrak{s}$-conflation
\begin{align}
N^{\oplus d'}\to P'\xrightarrow{\alpha'}M\dashrightarrow, \notag
\end{align}
where $d'\le \dim_{D_{N}}\mathbb{E}(M,N)$ and $\alpha'$ is right minimal.
Since $N$ is a stone, we have $P,P'\in {}^{\perp_{\mathbb{E}}}\mathcal{F}(N)$, and $\alpha,\alpha'$ are minimal right ${}^{\perp_{\mathbb{E}}}\mathcal{F}(N)$-approximations. This implies that $P\cong P'$ and $d=d'$.
\end{remark}

\subsection{Mixed standardizable sets}
Let $\Bbbk$ be a field and  $\mathcal{C}:=(\mathcal{C},\mathbb{E},\mathfrak{s},\mathbb{E}^{-1})$ a $\Bbbk$-linear Krull--Schmidt extriangulated category with a negative first extension.
In this subsection, we introduce the notion of a mixed standardizable set, which is a generalization of standardizable sets \cite{DR92} and proper pre-costratifying systems \cite{MPV14} in an abelian category, $\Theta$-systems \cite{MS16} in a triangulated category and $F$-systems \cite{S19} in an exact category.
In the rest of this paper, when we say that $\Theta:=(\Theta(i_{1}), \Theta(i_{2}), \ldots, \Theta(i_{t}))$ is an ordered set of objects, this means that $i_{1}<i_{2} < \cdots <i_{t}$. 
We frequently identify an ordered set $\Theta=(\Theta(i_{1}),\Theta(i_{2}), \ldots, \Theta(i_{t}))$ of objects with an object $\Theta=\Theta(i_{1})\oplus \Theta(i_{2})\oplus \cdots \oplus \Theta(i_{t})$.

\begin{definition}\label{def_ms}
Let $\Theta:=(\Theta(1), \Theta(2), \ldots, \Theta(t))$ be an ordered set of objects in $\mathcal{C}$.
\begin{itemize}
\item[(1)] $\Theta$ is called a \emph{mixed standardizable set} in $\mathcal{C}$ if it satisfies the following conditions.
\begin{itemize}
\item[(MS1)] For each $i\in [1,t]$, $\Theta(i)$ is a brick or a stone.
\item[(MS2)] If $i>j$, then $\mathcal{C}(\Theta(i), \Theta(j))=0$ holds.
\item[(MS3)] If $i>j$, then $\mathbb{E}(\Theta(i), \Theta(j))=0$ holds.
\item[(MS4)] $\mathbb{E}^{-1}(\Theta,\Theta)=0$.
\end{itemize}
\item[(2)] Assume that $\Theta(i)$ is indecomposable for each $i\in [1,t]$.
Let $\mathcal{E}$ be an extension-closed subcategory of $\mathcal{C}$ which contains $\Theta$ and satisfies that $\mathbb{E}(M,\Theta(i))$ is a finite dimensional right $D_{\Theta(i)}$-module for each $M \in \mathcal{E}$ and $i\in [1,t]$.
We say that \emph{$\mathcal{E}$ admits a finite sequence of universal extensions by $\Theta$} if, for each $i\in [1,t]$, the category $\mathcal{E}$ admits a finite sequence of universal extensions by $\Theta(i)$.
Dually, we define the notion of a \emph{finite sequence of universal coextensions by $\Theta$}.
\end{itemize}
\end{definition}

It follows from Proposition \ref{prop_stone-univex} and its dual statement that if $\Theta(i)$ is a stone for all $i\in[1,t]$ and $\dim_{\Bbbk}\mathbb{E}(\Theta, \Theta)< \infty$, then $\mathcal{F}(\Theta)$ admits a finite sequence of universal extensions by $\Theta$ and a finite sequence of universal coextensions by $\Theta$.

We give a typical example that $\mod A$ admits a finite sequence of universal extensions.
Recall the definitions of standard modules and proper standard modules.
Let $A$ be a finite dimensional $\Bbbk$-algebra and $(e_{1},e_{2},\ldots, e_{n})$ a complete ordered set of primitive orthogonal idempotents of $A$.
Let $\{S(i):=\top e_{i}A \mid 1 \le i \le n \}$ denote the set of  representatives of isomorphism classes of simple $A$-modules.
The $i$-th standard module $\Delta(i)$ is defined as the maximal factor module of $e_{i}A$ satisfying $\Delta(i) \in \mathcal{F}(S(1)\oplus \cdots \oplus S(i))$.
The $i$-th proper standard module $\overline{\Delta}(i)$ is defined as the maximal factor module of $\Delta(i)$ satisfying $\rad \overline{\Delta}(i) \in \mathcal{F}(S(1)\oplus \cdots \oplus S(i-1))$.
Then we can check that the standard module $\Delta(i)$ is a stone and the proper standard module $\overline{\Delta}(i)$ is a brick.

\begin{proposition}\label{prop_mixstd}
Let $A$ be a finite dimensional $\Bbbk$-algebra and $\Theta(i)\in \{ \Delta(i), \overline{\Delta}(i)\}$ for each $i \in [1,n]$.
Then $\Theta:=(\Theta(1), \ldots, \Theta(n))$ is a mixed standardizable set and $\mod A$ admits a finite sequence of universal extensions by $\Theta$.
\end{proposition}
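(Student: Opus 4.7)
The plan is to check conditions (MS1)--(MS4) of Definition \ref{def_ms}(1) in turn, and then apply Proposition \ref{prop_stone-univex} or Proposition \ref{prop_brick-univex} according to whether $\Theta(i)=\Delta(i)$ or $\Theta(i)=\overline{\Delta}(i)$ in order to produce the finite sequence of universal extensions.

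Condition (MS1) is recorded in the paragraph preceding the statement: $\Delta(i)$ is a stone and $\overline{\Delta}(i)$ is a brick. Condition (MS4) is automatic because the extriangulated structure on $\mod A$ from Example \ref{ex_neg}(2) has $\mathbb{E}^{-1}\equiv 0$. For (MS2), fix $i>j$: in both cases $\Theta(i)$ is a quotient of the indecomposable projective $e_iA$ and hence has simple top $S(i)$, while every composition factor of $\Theta(j)$ is $S(k)$ with $k\le j$. A nonzero morphism $\Theta(i)\to\Theta(j)$ would have a nonzero image, whose top is a nonzero quotient of $S(i)$ and therefore equal to $S(i)$, forcing $S(i)$ to be a composition factor of $\Theta(j)$, contradicting $i>j$.

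For (MS3), let $K(i)$ denote the kernel of the projective cover $e_iA\twoheadrightarrow\Theta(i)$. Using the standard descriptions $\Delta(i)=e_iA/e_iA\varepsilon A$ with $\varepsilon:=e_{i+1}+\cdots+e_n$ and $\overline{\Delta}(i)=e_iA/e_i(\rad A)e_{\ge i}A$, one checks that in both cases $K(i)$ is generated as a right $A$-module by elements of the form $e_i\rho e_l$ with $\rho\in\rad A$ and $l\ge i$, and consequently $\top K(i)\in\add\{S(l):l\ge i\}$. Applying $\Hom_A(-,\Theta(j))$ with $j<i$ to $0\to K(i)\to e_iA\to\Theta(i)\to 0$ gives an isomorphism $\Ext^1_A(\Theta(i),\Theta(j))\cong\Hom_A(K(i),\Theta(j))$, because $\Hom_A(e_iA,\Theta(j))=\Theta(j)e_i=0$ (no $S(i)$-composition factor in $\Theta(j)$). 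Any morphism $K(i)\to\Theta(j)$ has an image whose top is a quotient of $\top K(i)$, so it lies in $\add\{S(l):l\ge i\}$; but its composition factors are contained in $\{S(k):k\le j\}$. Since $l\ge i>j\ge k$, the only possibility is that the image is zero, which proves (MS3).

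For the finite sequence of universal extensions, if $\Theta(i)=\Delta(i)$ then $\Theta(i)$ is a stone and $\mod A$ is $\Ext$-finite, so Proposition \ref{prop_stone-univex} applies to $\mathcal{E}=\mod A$. If $\Theta(i)=\overline{\Delta}(i)$ then $\Theta(i)$ is a brick and I invoke Proposition \ref{prop_brick-univex}(ii), whose hypothesis $\Ext^1_A(\overline{\Delta}(i),\rad\overline{\Delta}(i))=0$ follows from the (MS3) computation applied with $\Theta(j)$ replaced by $S(k)$ for each $k<i$ (yielding $\Ext^1_A(\overline{\Delta}(i),S(k))=0$), together with the long exact sequence in the second $\Ext$-variable applied to a filtration of $\rad\overline{\Delta}(i)\in\mathcal{F}(S(1),\ldots,S(i-1))$ by such simples. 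The main obstacle I foresee is verifying the explicit formula $\overline{\Delta}(i)=e_iA/e_i(\rad A)e_{\ge i}A$ and the corresponding top computation for $K(i)$ in the proper-standard case; once this is in hand the remaining bookkeeping is uniform across the two cases.
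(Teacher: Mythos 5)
Your proposal is correct and follows essentially the same route as the paper: verify (MS1), (MS2), (MS4) directly, prove (MS3) by applying $\Hom_{A}(-,\Theta(j))$ to the projective presentation $0\to K(i)\to e_{i}A\to\Theta(i)\to 0$ and using $\top K(i)\in\add\oplus_{l\ge i}S(l)$, and then obtain the finite sequences of universal extensions from Proposition \ref{prop_stone-univex} for $\Delta(i)$ and Proposition \ref{prop_brick-univex}(ii) for $\overline{\Delta}(i)$ via the vanishing $\Ext_{A}^{1}(\overline{\Delta}(i),\rad\overline{\Delta}(i))=0$. The only cosmetic difference is that you deduce this last vanishing from $\Ext_{A}^{1}(\overline{\Delta}(i),S(k))=0$ for $k<i$ together with a filtration of $\rad\overline{\Delta}(i)$, whereas the paper gets it in one step by applying $\Hom_{A}(-,\rad\overline{\Delta}(i))$ to the same presentation; both are immediate and equivalent.
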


\begin{proof}
We regard $\mod A$ as the extriangulated category with the negative first extension $\mathbb{E}^{-1}=0$ by Example \ref{ex_neg}(2).
First we show that $\Theta$ is a mixed standardizable set.
Since $\mathbb{E}^{-1}=0$, (MS4) clearly holds.
By definitions of standard modules and proper standard modules, (MS1) and (MS2) clearly hold.
We show (MS3).
Let $i>j$.
Applying $\Hom_{A}(-, \Theta(j))$ to an exact sequence $0\to K \to e_{i}A \to \Theta(i) \to 0$ induces an exact sequence
\begin{align*}
\Hom_{A}(K, \Theta(j)) \to \Ext_{A}^{1}(\Theta(i), \Theta(j)) \to \Ext_{A}^{1}(e_{i}A, \Theta(j))=0.
\end{align*}
By $\top K \in  \add \oplus_{k \ge i} S(k)$, we have $\Hom_{A}(K, \Theta(j))=0$, and hence $\Ext_{A}^{1}(\Theta(i), \Theta(j))=0$.

Next we show that $\mod A$ admits a finite sequence of universal extensions by $\Theta$.
Since standard modules are stones, it follows from Proposition \ref{prop_stone-univex} that $\mod A$ admits a finite sequence of universal extensions by each standard module.
Thus we prove that $\mod A$ admits a finite sequence of universal extensions by each proper standard module.
By Proposition \ref{prop_brick-univex}(ii), it is enough to show  $\Ext_{A}^{1}(\overline{\Delta}(i), \rad \overline{\Delta}(i))=0$ for each $i\in [1,n]$.
Applying $\Hom_{A}(-,\rad \overline{\Delta}(i))$ to an exact sequence $0 \to K \to e_{i}A \to \overline{\Delta}(i) \to 0$ induces an exact sequence
\begin{align}
\Hom_{A}(K, \rad \overline{\Delta}(i)) \to \Ext_{A}^{1}(\overline{\Delta}(i), \rad \overline{\Delta}(i)) \to\Ext_{A}^{1}(e_{i}A, \rad \overline{\Delta}(i))= 0. \notag
\end{align}
By $\top K \in  \add \oplus_{k \ge i} S(k)$ and $\rad \overline{\Delta}(i) \in \mathcal{F}(S(1)\oplus \cdots \oplus S(i-1))$, we have $\Hom_{A}(K, \rad \overline{\Delta}(i))=0$.
This completes the proof.
\end{proof}

For a mixed standardizable set $\Theta$, the subcategory $\mathcal{F}(\Theta)$ has the following property.

\begin{proposition}\label{prop_dirsummand}
Let $\Theta:=(\Theta(1), \Theta(2), \ldots, \Theta(t))$ be an ordered set of indecomposable objects in $\mathcal{C}$.
Then the following statements hold.
\begin{itemize}
\item[(1)] If \textnormal{(MS3)} is satisfied, then $\mathcal{F}(\Theta)=\mathcal{F}(\Theta(t)) \ast \mathcal{F}(\Theta(t-1)) \ast \cdots \ast \mathcal{F}(\Theta(1))$ holds.
\item[(2)] If $\Theta$ satisfies \textnormal{(MS1)}, \textnormal{(MS2)} and \textnormal{(MS3)}, then $\mathcal{F}(\Theta)$ is closed under direct summands.
\end{itemize}
\end{proposition}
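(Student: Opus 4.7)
The plan is to prove part (1) by a bubble-sort rearrangement and then deduce part (2) as a formal consequence.

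For (1), the inclusion $\mathcal{F}(\Theta(t)) \ast \cdots \ast \mathcal{F}(\Theta(1)) \subseteq \mathcal{F}(\Theta)$ is immediate from the definition of $\mathcal{F}(\Theta)$ together with its extension-closedness. For the reverse direction, Lemma \ref{lem_filt} lets me represent any $X \in \mathcal{F}(\Theta)$ as an element of $\add \Theta(i_{1}) \ast \cdots \ast \add \Theta(i_{l})$ for some sequence $i_{1},\ldots,i_{l} \in [1,t]$. I will show by induction on the number of inversions (pairs $k<k'$ with $i_{k}<i_{k'}$) that such a sequence can always be reordered so that $i_{1}\geq i_{2}\geq \cdots \geq i_{l}$. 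The key step is: for an adjacent pair with $i_{k}<i_{k+1}$, associativity of $\ast$ isolates the factor $\add \Theta(i_{k}) \ast \add \Theta(i_{k+1})$ inside the full convolution, and (MS3) gives $\mathbb{E}(\Theta(i_{k+1}),\Theta(i_{k}))=0$, so Lemma \ref{lem_extcl}(1) swaps this factor to $\add \Theta(i_{k+1}) \ast \add \Theta(i_{k})$, strictly reducing the number of inversions. Once the indices are weakly decreasing, consecutive repetitions of the same index collapse by definition into $\mathcal{F}(\Theta(j))$, and missing indices are inserted using $0 \in \mathcal{F}(\Theta(j))$ via the trivial $\mathfrak{s}$-conflation $0\to Y \to Y \dashrightarrow$, which realises any $Y$ as an element of $\{0\}\ast \{Y\}$. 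This places $X$ inside $\mathcal{F}(\Theta(t))\ast \mathcal{F}(\Theta(t-1))\ast \cdots \ast \mathcal{F}(\Theta(1))$.

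For (2), I first observe that each $\mathcal{F}(\Theta(i))$ is closed under direct summands: if $\Theta(i)$ is a brick, this is Proposition \ref{prop_sbrick-dirsummand}; if $\Theta(i)$ is a stone, then $\mathbb{E}(\Theta(i),\Theta(i))=0$ forces every $\mathfrak{s}$-conflation with outer terms in $\add \Theta(i)$ to split via Proposition \ref{prop_longex}, so $\mathcal{F}(\Theta(i))=\add \Theta(i)$, which is trivially closed under direct summands. Combining (1) with Lemma \ref{lem_extcl}(2), I then proceed by induction on $t$: writing $\mathcal{F}(\Theta)=\mathcal{F}(\Theta(t))\ast [\mathcal{F}(\Theta(t-1))\ast \cdots \ast \mathcal{F}(\Theta(1))]$, the inner bracket is closed under direct summands by the inductive hypothesis; moreover (MS2) together with the remark preceding Lemma \ref{lem_extcl} gives $\mathcal{C}(\mathcal{F}(\Theta(t)),\mathcal{F}(\Theta(j)))=0$ for each $j<t$, and hence $\mathcal{C}(\mathcal{F}(\Theta(t)), \mathcal{F}(\Theta(t-1))\ast \cdots \ast \mathcal{F}(\Theta(1)))=0$. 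The hypotheses of Lemma \ref{lem_extcl}(2) are satisfied, yielding the conclusion.

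The main obstacle is ensuring that each adjacent-swap step of the bubble sort in part (1) genuinely lives inside the ambient convolution; this is handled by repeatedly invoking associativity of $\ast$ together with the fact that $\add \Theta(i)$ is closed under direct summands, which is required by the hypotheses of Lemma \ref{lem_extcl}(1). Beyond this, the argument is formal: notably (MS1) and (MS4) are not needed for part (1), and in part (2) the conditions (MS1) (through Proposition \ref{prop_sbrick-dirsummand}) and (MS2) play the decisive roles.
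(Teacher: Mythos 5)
Your proof is correct and follows essentially the same route as the paper: part (1) via Lemma \ref{lem_filt} combined with the swap provided by (MS3) and Lemma \ref{lem_extcl}(1) (your bubble-sort on inversions just makes explicit the paper's ``we can take $t\ge i_{1}\ge\cdots\ge i_{l}\ge 1$''), and part (2) by reducing, through (MS2) and Lemma \ref{lem_extcl}(2), to the summand-closure of each $\mathcal{F}(\Theta(i))$, handled by $\mathcal{F}(\Theta(i))=\add\Theta(i)$ in the stone case and Proposition \ref{prop_sbrick-dirsummand} in the brick case. No gaps; the extra details you supply (monotonicity of $\ast$, insertion of trivial factors, the induction on $t$) are exactly what the paper leaves implicit.
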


\begin{proof}
(1) Since $\mathcal{F}(\Theta)$ is extension-closed, $\mathcal{F}(\Theta)\supseteq \mathcal{F}(\Theta(t)) \ast \cdots \ast \mathcal{F}(\Theta(1))$ holds.
We show the converse inclusion.
Let $M\in \mathcal{F}(\Theta)$.
By Lemma \ref{lem_filt}, we obtain
\begin{align}
M\in \add \Theta(i_{1})\ast \cdots \ast \add \Theta(i_{l}). \notag
\end{align}
By (MS3) and Lemma \ref{lem_extcl}(1), we can take $t\ge i_{1}\ge i_{2}\ge \cdots \ge i_{l}\ge 1$.
Hence we have the assertion.

(2) Due to (1), we have $\mathcal{F}(\Theta)=\mathcal{F}(\Theta(t)) \ast \cdots \ast \mathcal{F}(\Theta(1))$.
By (MS2) and Lemma \ref{lem_extcl}(2), it is enough to show that  $\mathcal{F}(\Theta(i))$ is closed under direct summands for each $i\in [1,t]$.
If $\Theta(i)$ is a stone, then $\mathcal{F}(\Theta(i))=\add \Theta(i)$.
On the other hand, if $\Theta(i)$ is a brick, then $\mathcal{F}(\Theta(i))$ is closed under direct summands by Proposition \ref{prop_sbrick-dirsummand}.
This completes the proof.
\end{proof}

A subcategory $\mathcal{E}$ of $\mathcal{C}$ is said to be \emph{$\mathbb{E}$-finite} if for each $M,N\in\mathcal{E}$, the $\Bbbk$-vector space $\mathbb{E}(M,N)$ is finite dimensional.
Remark that $\dim_{D_{N}}\mathbb{E}(M,N)\le \dim_{\Bbbk}\mathbb{E}(M,N)$ holds.
For an object $\Theta$ of $\mathcal{C}$, we can check that $\add\Theta$ is $\mathbb{E}$-finite if and only if $\mathcal{F}(\Theta)$ is $\mathbb{E}$-finite.

In the following, we fix an ordered set $\Theta=(\Theta(1), \ldots, \Theta(t))$ of indecomposable objects in $\mathcal{C}$.
For each $j\in [1,t]$, we define a subcategory $\mathcal{F}(\Theta(\ge j))$ as $\mathcal{F}(\Theta(\ge j)):=\mathcal{F}(\Theta(j) \oplus \cdots \oplus \Theta(t))$.
Similarly, we define subcategories $\mathcal{F}(\Theta(>j))$, $\mathcal{F}(\Theta(\le j))$, and $\mathcal{F}(\Theta(<j))$.
If $\mathcal{E}$ admits a finite sequence of universal extensions by $\Theta$, then we obtain a special $\mathfrak{s}$-conflation as follows.

\begin{proposition}\label{prop_extproj}
Assume that $\Theta$ satisfies \textnormal{(MS1)}, \textnormal{(MS2)} and \textnormal{(MS3)}.
Let $\mathcal{E}$ be an $\mathbb{E}$-finite extension-closed subcategory of $\mathcal{C}$ containing $\Theta$ and admitting a finite sequence of universal extensions by $\Theta$.
Then the following statements hold for each $M \in \mathcal{E}$.
\begin{itemize}
\item[(1)] There exists an $\mathfrak{s}$-conflation $K \to P \xrightarrow{\alpha_{M}} M \dashrightarrow$ such that $P \in {}^{\perp_{\mathbb{E}}}\mathcal{F}(\Theta)\cap \mathcal{E}$ and $\alpha_{M}$ is right minimal.
Moreover, $K=0$ if $M\in{}^{\perp_{\mathbb{E}}}\mathcal{F}(\Theta)$ or $K \in \mathcal{F}(\Theta(\ge j))$ if $M \not\in{}^{\perp_\mathbb{E}}\mathcal{F}(\Theta)$, where $j$ is the minimum value satisfying $\mathbb{E}(M,\Theta(j))\neq 0$.
\item[(2)] Assume that $\Theta$ is a mixed standardizable set  and $\mathcal{E}=\mathcal{F}(\Theta)$.
If $M\not\in{}^{\perp_\mathbb{E}}\mathcal{F}(\Theta)$ is an indecomposable object contained in $\mathcal{F}(\Theta(\ge j))^{\perp_{\mathcal{C}}}$ or $\add \Theta(j)$, then $P$ is indecomposable.
\end{itemize}
\end{proposition}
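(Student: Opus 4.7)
My plan for part (1) is to iterate the universal extension construction of Lemma \ref{lem_brick-uniex}(2), processing one index at a time starting from the smallest one at which $M$ has a non-vanishing extension into $\Theta$. If $M \in {}^{\perp_{\mathbb{E}}}\mathcal{F}(\Theta)$, take $P=M$ and $K=0$. Otherwise let $j$ be minimal with $\mathbb{E}(M,\Theta(j))\neq 0$, so by (MS3), $\mathbb{E}(M,\Theta(k))=0$ for $k<j$. Apply Lemma \ref{lem_brick-uniex}(2) with $N=\Theta(j)$ to obtain $K^{(1)}\to P^{(1)}\to M\dashrightarrow$ with $K^{(1)}\in\mathcal{F}(\Theta(j))$ and $P^{(1)}\in {}^{\perp_{\mathbb{E}}}\mathcal{F}(\Theta(j))\cap\mathcal{E}$. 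A long-exact-sequence argument (Proposition \ref{prop_longex}) with $W=\Theta(k)$ together with (MS3) shows $\mathbb{E}(P^{(1)},\Theta(k))=0$ for every $k\le j$. Iterating with $P^{(1)}$ in place of $M$ and the next admissible index yields, after at most $t$ steps, an object $P\in{}^{\perp_{\mathbb{E}}}\mathcal{F}(\Theta)\cap \mathcal{E}$. Gluing the consecutive conflations by repeated (ET4) produces a single $\mathfrak{s}$-conflation $K\to P\xrightarrow{\alpha} M\dashrightarrow$ with $K\in\mathcal{F}(\Theta(\ge j))$ by associativity of $\ast$. Finally, decompose $\alpha$ via Remark \ref{rem_minmor} into its right-minimal part $\alpha_{M}$; Lemma \ref{lem_minmor}(2) together with Proposition \ref{prop_dirsummand}(2), applied to the mixed-standardizable subset $(\Theta(j),\ldots,\Theta(t))$, ensures that the new cocone still lies in $\mathcal{F}(\Theta(\ge j))$.

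For part (2) I suppose for a contradiction that $P=P_{1}\oplus P_{2}$ with both summands non-zero, write $\alpha_{M}=[\alpha_{1},\alpha_{2}]$, and note that right minimality forces $\alpha_{1},\alpha_{2}\neq 0$. In the sub-case $M\in\add\Theta(j)$ we have $M=\Theta(j)$, and we apply $\mathcal{C}(-,\Theta(j))$ to $K\to P\xrightarrow{\alpha_{M}}M\overset{\eta}{\dashrightarrow}$. Since $K$ lies in $\mathcal{F}(\Theta(j))\ast\mathcal{F}(\Theta(>j))$ and (MS2) yields $\mathcal{C}(\mathcal{F}(\Theta(>j)),\Theta(j))=0$, the restriction map $\mathcal{C}(K,\Theta(j))\hookrightarrow \mathcal{C}(K^{(1)},\Theta(j))$ is injective; combined with the isomorphism $\mathcal{C}(K^{(1)},\Theta(j))\cong\mathbb{E}(\Theta(j),\Theta(j))$ provided by Lemma \ref{lem_brick-uniex}(1)(c) (using that $\Theta(j)$ is a brick) and the surjectivity of $\eta^{\sharp}$ from $\mathbb{E}(P,\Theta(j))=0$, we deduce that $\eta^{\sharp}$ is an isomorphism. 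Consequently $\mathcal{C}(M,\Theta(j))\twoheadrightarrow \mathcal{C}(P,\Theta(j))$, and since $\mathcal{C}(M,\Theta(j))=D_{\Theta(j)}$ is one-dimensional over itself, so is $\mathcal{C}(P,\Theta(j))$, forcing $\mathcal{C}(P_{i},\Theta(j))=0$ for some $i$, which contradicts $\alpha_{i}\neq 0$.

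In the sub-case $M\in \mathcal{F}(\Theta(\ge j))^{\perp_{\mathcal{C}}}$, $K\in\mathcal{F}(\Theta(\ge j))$ gives $\mathcal{C}(K,M)=0$, and (MS4) propagated through filtrations via the four-term exact sequences of (NE2) gives $\mathbb{E}^{-1}(K,M)=0$. Hence $\alpha_{M}^{\ast}\colon\End_{\mathcal{C}}(M)\xrightarrow{\cong}\mathcal{C}(P,M)$. Choose $g_{1},g_{2}\in\End_{\mathcal{C}}(M)$ with $[\alpha_{1},0]=g_{1}\alpha_{M}$ and $[0,\alpha_{2}]=g_{2}\alpha_{M}$; then $g_{1}+g_{2}=1_{M}$ and $g_{1}\alpha_{2}=0=g_{2}\alpha_{1}$. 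Since $M$ is indecomposable, $\End_{\mathcal{C}}(M)$ is local, so one of $g_{i}$ is invertible, forcing $\alpha_{1}=0$ or $\alpha_{2}=0$ and again contradicting right minimality.

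The main obstacle lies in the first sub-case of part (2): extracting the precise one-dimensionality of $\mathcal{C}(P,\Theta(j))$ requires careful bookkeeping of how the iterated universal extensions stack under (ET4) and interact with the long exact sequences, and here the brick (rather than merely stone) property of $\Theta(j)$ is indispensable. The second sub-case, by contrast, is clean once one observes that (MS4) propagates to the whole of $\mathcal{F}(\Theta)$, which is where the negative first extension structure truly enters the argument.
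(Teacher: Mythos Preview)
Your argument for part (1) follows the paper's proof essentially verbatim, and your second sub-case of part (2) is a direct reproof of Lemma~\ref{lem_indec}, which is exactly what the paper invokes there.

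The gap is in the first sub-case of part (2). Your claim that ``$K$ lies in $\mathcal{F}(\Theta(j))\ast\mathcal{F}(\Theta(>j))$'' has the factors in the wrong order: the iterated (ET4)$^{\op}$ gluing produces conflations $L'_{i}\to L_{i+1}\to L_{i}$ with $L'_{i}\in\mathcal{F}(\Theta(j_{i+1}))$ and $j_{i+1}>j$, so the pre-minimal kernel $L_{m}$ lies in $\mathcal{F}(\Theta(>j))\ast\mathcal{F}(\Theta(j))$, and swapping would require $\mathbb{E}(\Theta(j),\Theta(>j))=0$, which is not assumed. More seriously, after replacing $\beta_{1}\cdots\beta_{m}$ by its right-minimal part $\alpha_{M}$, the new cocone $K$ is only a \emph{direct summand} of $L_{m}$, so there is no obvious inflation $K^{(1)}\to K$ along which to restrict. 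As written, the injection $\mathcal{C}(K,\Theta(j))\hookrightarrow\mathcal{C}(K^{(1)},\Theta(j))$ is therefore unjustified, and the one-dimensionality of $\mathcal{C}(P,\Theta(j))$ does not follow.

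The paper avoids this by postponing right-minimalization: it shows that each $\mathcal{C}(\beta_{i},M)$ is an isomorphism (using Lemma~\ref{lem_brick-uniex}(1)(c) for $i=1$ in the brick case, and (MS2) together with (MS4) for $i\ge 2$), hence so is $\mathcal{C}(\beta_{1}\cdots\beta_{m},M)$; since $\alpha_{M}$ is the right-minimal summand of this composite, $\mathcal{C}(\alpha_{M},M)$ is then automatically an isomorphism, and Lemma~\ref{lem_indec} applies to both sub-cases uniformly. Your dimension-count can be repaired along the same lines: before minimalizing, the conflations $L'_{i}\to L_{i+1}\to L_{i}$ give isomorphisms $\mathcal{C}(L_{i},\Theta(j))\xrightarrow{\ \cong\ }\mathcal{C}(L_{i+1},\Theta(j))$ (using (MS2) for surjectivity and (MS4) for injectivity), so $\mathcal{C}(L_{m},\Theta(j))\cong\mathcal{C}(K^{(1)},\Theta(j))$; since $K$ is a summand of $L_{m}$, you obtain $\dim_{D_{\Theta(j)}}\mathcal{C}(K,\Theta(j))\le\dim_{D_{\Theta(j)}}\mathbb{E}(\Theta(j),\Theta(j))$, and then surjectivity of $\eta^{\sharp}$ forces equality.
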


To show Proposition \ref{prop_extproj}(2), we need the following lemma.

\begin{lemma}\label{lem_indec}
Let $\alpha: X \to Y$ be a right minimal morphism such that $\mathcal{C}(\alpha, Y)$ is an isomorphism.
If $\End_{\mathcal{C}}(Y)$ is local, then $X$ is indecomposable.
\end{lemma}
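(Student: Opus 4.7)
The plan is to show that every idempotent in $\End_{\mathcal{C}}(X)$ is either $0$ or $1_{X}$, which (in the Krull--Schmidt setting of the paper) yields indecomposability. First I would rule out $X=0$: if $X=0$, then $\mathcal{C}(X,Y)=0$, and the hypothesis that $\mathcal{C}(\alpha,Y)\colon\End_{\mathcal{C}}(Y)\to\mathcal{C}(X,Y)$ is an isomorphism would force $\End_{\mathcal{C}}(Y)=0$, contradicting locality.

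Next, let $e\in\End_{\mathcal{C}}(X)$ be an idempotent. Since $\alpha e\in\mathcal{C}(X,Y)$ and $\mathcal{C}(\alpha,Y)$ is an isomorphism, there is a unique $\beta\in\End_{\mathcal{C}}(Y)$ with $\beta\alpha=\alpha e$. The key trick is to exploit uniqueness: from $e^{2}=e$ we compute
\begin{align*}
\beta^{2}\alpha=\beta(\beta\alpha)=\beta(\alpha e)=(\beta\alpha)e=\alpha e^{2}=\alpha e=\beta\alpha,
\end{align*}
so uniqueness in $\mathcal{C}(\alpha,Y)$ gives $\beta^{2}=\beta$, i.e.\ $\beta$ is an idempotent of $\End_{\mathcal{C}}(Y)$. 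Because $\End_{\mathcal{C}}(Y)$ is local, we have $\beta=0$ or $\beta=1_{Y}$.

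In the case $\beta=0$, we get $\alpha e=0$, so $\alpha(1_{X}-e)=\alpha$; by right minimality of $\alpha$, the map $1_{X}-e$ is an isomorphism, and being simultaneously idempotent it equals $1_{X}$, whence $e=0$. In the case $\beta=1_{Y}$, we get $\alpha e=\alpha$, so right minimality forces $e$ itself to be an isomorphism, and the same idempotent--isomorphism argument gives $e=1_{X}$. Since the only idempotents of $\End_{\mathcal{C}}(X)$ are $0$ and $1_{X}$ and $X\neq 0$, the object $X$ is indecomposable. The only subtlety — and it is minor — is making sure right minimality is invoked in the correct direction (on $1_{X}-e$ versus on $e$); everything else is a bookkeeping consequence of uniqueness plus locality.
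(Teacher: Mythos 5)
Your proof is correct and follows essentially the same route as the paper: both transport idempotents through the isomorphism $\mathcal{C}(\alpha,Y)$, use locality of $\End_{\mathcal{C}}(Y)$ to force them to be trivial, and then use right minimality of $\alpha$ to conclude; the paper phrases this as a contradiction starting from a decomposition $X=X_{1}\oplus X_{2}$, whereas you argue directly with an arbitrary idempotent $e\in\End_{\mathcal{C}}(X)$ and also record the (implicit in the paper) check that $X\neq 0$.
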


\begin{proof}
Suppose that $X$ is not indecomposable, that is, $X=X_{1}\oplus X_{2}$.
We decompose $\alpha$ as $\alpha=\left[\begin{smallmatrix}\alpha_{1}&\alpha_{2}\end{smallmatrix}\right]: X_{1}\oplus X_{2}\to Y$.
By our assumption, there uniquely exist morphisms $a_{1},a_{2}\in \mathcal{C}(Y,Y)$ such that $\left[\begin{smallmatrix}a_{1}\alpha_{1} & a_{1}\alpha_{2}\end{smallmatrix}\right]=\left[\begin{smallmatrix}\alpha_{1}&0\end{smallmatrix}\right]$ and $\left[\begin{smallmatrix}a_{2}\alpha_{1}&a_{2}\alpha_{2}\end{smallmatrix}\right]=\left[\begin{smallmatrix}0&\alpha_{2}\end{smallmatrix}\right]$.
Then $1_{Y}=a_{1}+a_{2}$.
Moreover, $a_{1}$ and $a_{2}$ are idempotents by
\begin{align}
a_{1}^{2}\mapsto \begin{bmatrix} a_{1}^{2}\alpha_{1} & a_{1}^{2}\alpha_{2}\end{bmatrix}=\begin{bmatrix}\alpha_{1}& 0\end{bmatrix},\
a_{2}^{2}\mapsto \begin{bmatrix} a_{2}^{2}\alpha_{1}& a_{2}^{2}\alpha_{2}\end{bmatrix}=\begin{bmatrix}0 & \alpha_{2}\end{bmatrix}.\notag
\end{align}
Since $\End_{\mathcal{C}}(Y)$ is local, we obtain either $\alpha_{1}=0$ or $\alpha_{2}=0$.
This contradicts the right minimality of $\alpha$.
Thus $X$ is indecomposable.
\end{proof}

We give a proof of  Proposition \ref{prop_extproj}.

\begin{proof}[Proof of Proposition \ref{prop_extproj}]
(1) If $M \in {}^{\perp_{\mathbb{E}}} \mathcal{F}(\Theta)$, then $0 \to M \xrightarrow{1_{M}} M \dashrightarrow$ is the desired $\mathfrak{s}$-conflation.
Thus we assume $M \not \in {}^{\perp_{\mathbb{E}}} \mathcal{F}(\Theta)$.
Let $j$ be the minimum value satisfying $\mathbb{E}(M,\Theta(j))\neq 0$.
Our assumption implies that $\mathcal{E}$ admits a finite sequence of universal extensions by $\Theta(j)$.
By Lemma \ref{lem_brick-uniex}(2), we obtain an $\mathfrak{s}$-conflation
\begin{align}\label{sconfl}
L_{1} \to Q_{1} \xrightarrow{\beta_{1}} M \overset{\delta}\dashrightarrow
\end{align}
such that $L_{1} \in \mathcal{F}(\Theta(j))$ and $Q_{1} \in {}^{\perp_{\mathbb{E}}} \mathcal{F}(\Theta(j)) \cap \mathcal{E}$.
Applying $\mathcal{C}(-, \Theta(l))$ with $l <j$ to \eqref{sconfl}, we have an exact sequence
\begin{align}
\mathbb{E}(M,\Theta(l))\to \mathbb{E}(Q_{1},\Theta(l))\to \mathbb{E}(L_{1},\Theta(l)). \notag
\end{align}
By $\mathbb{E}(M,\Theta(l))=0=\mathbb{E}(L_{1},\Theta(l))$, we obtain $\mathbb{E}(Q_{1},\Theta(l))=0$. Therefore $Q_{1}\in {}^{\perp_{\mathbb{E}}}\mathcal{F}(\Theta(\le j))$.

We prove that there exists an $\mathfrak{s}$-conflation $K \to P \to M \dashrightarrow$ such that $K \in \mathcal{F}(\Theta(\ge j))$ and $P \in {}^{\perp_{\mathbb{E}}}\mathcal{F}(\Theta)\cap \mathcal{E}$.
If $Q_{1}\in {}^{\perp_{\mathbb{E}}}\mathcal{F}(\Theta(>j))$, then \eqref{sconfl} is the desired $\mathfrak{s}$-conflation.
In the following, we assume $Q_{1}\notin {}^{\perp_{\mathbb{E}}}\mathcal{F}(\Theta(>j))$ and take the minimum value $j_{2}>j_{1}:=j$ satisfying $\mathbb{E}(Q_{1},\Theta(j_{2}))\neq 0$.
By an argument similar to \eqref{sconfl}, we have an $\mathfrak{s}$-conflation $L_{1}'\to Q_{2}\xrightarrow{\beta_{2}} Q_{1}\dashrightarrow$ with $L'_{1}\in \mathcal{F}(\Theta(j_{2}))$ and $Q_{2}\in {}^{\perp_{\mathbb{E}}}\mathcal{F}(\Theta(\le j_{2})) \cap \mathcal{E}$.
Applying (ET4)$^{\op}$ in \cite[Definition 2.12]{NP19} to the $\mathfrak{s}$-conflation and \eqref{sconfl}, we obtain a commutative diagram of $\mathfrak{s}$-conflations
\begin{align}
\xymatrix{
L'_{1} \ar[r] \ar@{=}[d] &L_{2} \ar[r]\ar[d]&L_{1} \ar[d] \ar@{-->}[r] &\\
L'_{1}  \ar[r]&Q_{2}\ar[r]^-{\beta_{2}}\ar[d]&Q_{1} \ar@{-->}[r]\ar[d]^-{\beta_{1}}&.\\
&M\ar@{=}[r]\ar@{-->}[d] &M \ar@{-->}[d]&&\\
&&&
}\notag
\end{align}
In particular, we have the $\mathfrak{s}$-conflation $L_{2}\to Q_{2}\xrightarrow{\beta_{1}\beta_{2}}M\dashrightarrow$ with $L_{2}\in \mathcal{F}(\Theta(\ge j_{1}))$ and $Q_{2}\in {}^{\perp_{\mathbb{E}}}\mathcal{F}(\Theta(\le j_{2})) \cap \mathcal{E}$.
By repeating this procedure, there exists an $\mathfrak{s}$-conflation $L_{m} \to Q_{m} \xrightarrow{\beta_{1} \cdots \beta_{m}} M \dashrightarrow$ such that $L_{m} \in \mathcal{F}(\Theta(\ge j))$ and $Q_{m} \in {}^{\perp_{\mathbb{E}}}\mathcal{F}(\Theta) \cap \mathcal{E}$ for some $m>0$.
Since $\mathcal{C}$ is a Krull--Schmidt category, we can take a right minimal morphism $\alpha_{M}$ of $\beta_{1}\cdots \beta_{m}$.
By Remark \ref{rem_minmor}, we obtain an $\mathfrak{s}$-conflation $K \to P \xrightarrow{\alpha_{M}} M \dashrightarrow$ with  $K\in\add L_{m}$ and $P\in\add Q_{m}$.
By Proposition \ref{prop_dirsummand}(2), $\mathcal{F}(\Theta(\ge j))$ is closed under direct summands.
Thus $K\in\mathcal{F}(\Theta(\ge j))$.
Moreover, since $\mathcal{E}$ is extension-closed, we have $P\in{}^{\perp_{\mathbb{E}}}\mathcal{F}(\Theta)\cap\mathcal{E}$.

(2) In addition, we assume that $\Theta$ satisfies (MS4) and $\mathcal{E}=\mathcal{F}(\Theta)$.
Since $\mathcal{C}$ is a Krull--Schmidt category, $\End_{\mathcal{C}}(M)$ is local.
By Lemma \ref{lem_indec}, it is enough to show that $\mathcal{C}(\alpha_{M},M)$ is an isomorphism.
First we claim that $\mathcal{C}(\beta_{1}, M)$ is an isomorphism.
Applying $\mathcal{C}(-,M)$ to \eqref{sconfl} induces an exact sequence
\begin{align}
\mathbb{E}^{-1}(L_{1},M) \to \mathcal{C}(M,M)\xrightarrow{\mathcal{C}(\beta_{1},M)} \mathcal{C}(Q_{1},M)\to \mathcal{C}(L_{1},M)\xrightarrow{\delta^{\sharp}_{M}} \mathbb{E}(M,M)\notag
\end{align}
by (NE2) and Proposition \ref{prop_longex}.
By $L_{1}, M \in \mathcal{E}=\mathcal{F}(\Theta)$, it follows from (MS4) that $\mathbb{E}^{-1}(L_{1}, M)=0$.
If $M\in \mathcal{F}(\Theta(\ge j))^{\perp_{\mathcal{C}}}$, then $\mathcal{C}(L_{1},M)=0$.
On the other hand, if $M=\Theta(j)$, then the condition  $\mathbb{E}(M,\Theta(j))\neq 0$ implies that $\Theta(j)$ is a brick.
Hence the map $\delta^{\sharp}_{M}$ is an isomorphism by Lemma \ref{lem_brick-uniex}(1-c).
Therefore, for the both cases, $\mathcal{C}(\beta_{1},M)$ is an isomorphism.

Next we show that $\mathcal{C}(\beta_{i},M)$ is an isomorphism for each $i\in [2,m]$.
Applying $\mathcal{C}(-,M)$ to $L'_{i-1}\to Q_{i}\xrightarrow{\beta_{i}}Q_{i-1}\dashrightarrow$, we have an exact sequence
\begin{align}
\mathbb{E}^{-1}(L'_{i-1}, M) \to \mathcal{C}(Q_{i-1},M) \xrightarrow{\mathcal{C}(\beta_{i},M)} \mathcal{C}(Q_{i},M) \to \mathcal{C}(L'_{i-1},M)\notag
\end{align}
by (NE2).
By $L'_{i-1}, M \in \mathcal{E}=\mathcal{F}(\Theta)$, it follows from (MS4) that $\mathbb{E}^{-1}(L'_{i-1}, M)=0$.
Due to our assumption, $M \in \mathcal{F}(\Theta(\ge j))^{\perp_{\mathcal{C}}}$ or $M \in \add \Theta(j)$.
Since $L'_{i-1}\in \mathcal{F}(\Theta(j_{i}))$ with $j_{i}>j$,
we have $\mathcal{C}(L'_{i-1}, M)=0$.
Thus $\mathcal{C}(\beta_{1} \cdots \beta_{m} ,M)$ is an isomorphism, and hence so is $\mathcal{C}(\alpha_{M},M)$.
Therefore $P$ is indecomposable.
\end{proof}

As an application of Proposition \ref{prop_extproj}, we have the following result.

\begin{corollary}
Let $\mathcal{A}$ be a length abelian category and
let $\Theta$ be a mixed standardizable set satisfying that $\add \Theta$ is $\mathbb{E}$-finite.
If $\mod A$ admits a finite sequence of universal extensions by $\Theta$, then $\mathcal{F}(\Theta)$ is covariantly finite.
If in addition, $\mod A$ admits a finite sequence of universal coextensions by $\Theta$, then $\mathcal{F}(\Theta)$ is functorially finite.
\end{corollary}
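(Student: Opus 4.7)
The plan is to extend the reach of Proposition \ref{prop_extproj} from objects of $\mathcal{F}(\Theta)$ to arbitrary objects of $\mathcal{A}$, using that $\mathcal{A}$ is a length abelian category. The crucial preliminary is a finiteness claim: for every $M \in \mathcal{A}$ and every $i \in [1,t]$, the groups $\mathbb{E}(M, \Theta(i))$ and $\mathbb{E}(\Theta(i), M)$ are finite-dimensional $D_{\Theta(i)}$-modules. I would prove this by induction on the composition length of $M$, applying Proposition \ref{prop_longex} to a short exact sequence $0 \to M' \to M \to S \to 0$ with $S$ simple. The base case reduces to $\mathbb{E}$-finiteness between $\Theta(i)$ and the finitely many simple objects appearing in any composition series; this holds because $\add \Theta$ is $\mathbb{E}$-finite and $\mathcal{A}$ is length.

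With the finiteness available for arbitrary $M \in \mathcal{A}$, the iteration from Lemma \ref{lem_brick-uniex} applies and, by the hypothesis of a finite sequence of universal extensions by $\Theta$, terminates after finitely many steps. Following the proof of Proposition \ref{prop_extproj}(1) verbatim, I would thus produce for each $M \in \mathcal{A}$ an $\mathfrak{s}$-conflation
\[
K_M \to P_M \xrightarrow{\alpha_M} M \dashrightarrow
\]
with $K_M \in \mathcal{F}(\Theta)$ and $P_M \in {}^{\perp_{\mathbb{E}}}\mathcal{F}(\Theta)$. The verification that $\alpha_M$ (together with the conflation) provides the required $\mathcal{F}(\Theta)$-approximation of $M$ then runs as in Proposition \ref{prop_extproj}(2): axiom (MS4) kills the relevant $\mathbb{E}^{-1}$-term, while the vanishing $\mathbb{E}(P_M, \mathcal{F}(\Theta))=0$ kills the obstruction on the other side, so that the long exact sequences of $\mathbb{E}$ and $\mathbb{E}^{-1}$ give the desired lifting property. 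This establishes covariant finiteness of $\mathcal{F}(\Theta)$.

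For the functorial finiteness statement, I would apply the dual construction using the additional hypothesis of a finite sequence of universal coextensions. Iterating universal coextensions by $\Theta$ in the reverse order, on each $M \in \mathcal{A}$, yields an $\mathfrak{s}$-conflation $M \to I_M \to C_M \dashrightarrow$ with $C_M \in \mathcal{F}(\Theta)$ and $I_M \in \mathcal{F}(\Theta)^{\perp_{\mathbb{E}}}$, supplying the approximation on the other side; combined with the previous step this gives functorial finiteness.

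The main obstacle is the first step: establishing that the universal extension/coextension machinery can be executed globally in $\mathcal{A}$ with termination preserved, rather than only inside $\mathcal{F}(\Theta)$ as in Proposition \ref{prop_extproj}. The length abelian hypothesis controls finite-dimensionality of the relevant extension groups via a composition-series induction, and together with the finite sequence hypothesis this allows the construction to proceed without new input; once this is in place, the approximation property is a direct rerun of the argument used in Proposition \ref{prop_extproj}(2).
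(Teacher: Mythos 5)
There is a genuine gap at the decisive step. The $\mathfrak{s}$-conflation $K_M \to P_M \xrightarrow{\alpha_M} M$ with $K_M \in \mathcal{F}(\Theta)$ and $P_M \in {}^{\perp_{\mathbb{E}}}\mathcal{F}(\Theta)$ that you extract from Proposition \ref{prop_extproj}(1) (applied with $\mathcal{E}=\mathcal{A}$) is a special \emph{right ${}^{\perp_{\mathbb{E}}}\mathcal{F}(\Theta)$-approximation} of $M$, as in Lemma \ref{lem_brick-uniex}(2); it is not a left $\mathcal{F}(\Theta)$-approximation. Indeed $P_M$ need not lie in $\mathcal{F}(\Theta)$, and $\alpha_M$ points \emph{into} $M$, whereas covariant finiteness requires a morphism $M \to X$ with $X \in \mathcal{F}(\Theta)$ through which every $M \to N$, $N \in \mathcal{F}(\Theta)$, factors. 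Neither (MS4) nor the vanishing $\mathbb{E}(P_M,\mathcal{F}(\Theta))=0$ produces such a morphism from your conflation, so "the verification runs as in Proposition \ref{prop_extproj}(2)" does not apply (that part of the proposition concerns indecomposability of $P$, not approximations of arbitrary objects). The missing idea is a Salce-type correction, and this is exactly where the length hypothesis is really used: in the paper one first forms $M':=M/\mathrm{rej}_{\mathcal{F}(\Theta)}(M)$ (the rejection exists because $\mathcal{A}$ is a length category), so that $M'$ embeds into some $X \in \mathcal{F}(\Theta)$, giving $0 \to M' \to X \to C \to 0$; one then applies Proposition \ref{prop_extproj}(1) to the cokernel $C$ to obtain $0 \to K_C \to P_C \to C \to 0$ and takes the pullback of $X \to C$ along $P_C \to C$. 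The pullback $E$ fits into $0 \to K_C \to E \to X \to 0$, hence $E \in \mathcal{F}(\Theta)$, and into $0 \to M' \xrightarrow{\varphi} E \to P_C \to 0$, and $\Ext^1_{\mathcal{A}}(P_C,N)=0$ for $N \in \mathcal{F}(\Theta)$ shows that $\varphi$, hence the composite $M \to M' \to E$, is a left $\mathcal{F}(\Theta)$-approximation. Your proposal contains no analogue of the rejection/pullback step, and the same problem recurs dually in your second half: $M \to I_M \to C_M$ with $I_M \in \mathcal{F}(\Theta)^{\perp_{\mathbb{E}}}$ is a left $\mathcal{F}(\Theta)^{\perp_{\mathbb{E}}}$-approximation, not the right $\mathcal{F}(\Theta)$-approximation needed for functorial finiteness; there one would dually use the trace and a pushout.

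A secondary point: your preliminary finiteness claim is both unjustified and unnecessary. That $\add\Theta$ is $\mathbb{E}$-finite and $\mathcal{A}$ is length does not bound $\Ext^1_{\mathcal{A}}(S,\Theta(i))$ for simple objects $S$ (a length abelian category need not be Ext-finite), so the proposed induction has no valid base case. Fortunately it is not needed: by Definition \ref{def_ms}(2), the hypothesis that $\mathcal{A}$ admits a finite sequence of universal extensions (respectively coextensions) by $\Theta$ already includes the finite-dimensionality of $\mathbb{E}(M,\Theta(i))$ (respectively $\mathbb{E}(\Theta(i),M)$) over $D_{\Theta(i)}$ for every $M$. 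In particular, in your write-up the length hypothesis is invoked only where it is not needed, and never where it is actually essential.
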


\begin{proof}
We regard $\mathcal{A}$ as the extriangulated category with the negative first extension $\mathbb{E}^{-1}=0$ by Example \ref{ex_neg}(2).
Let $M \in \mathcal{A}$.
We show that there exists a left $\mathcal{F}(\Theta)$-approximation of $M$.
Since $\mathcal{A}$ is a length abelian category, we can take the rejection $\mathrm{rej}_{\mathcal{F}(\Theta)}(M)$ of $M$.
Let $M':= M/ \mathrm{rej}_{\mathcal{F}(\Theta)}(M)$.
Then we have an exact sequence $0 \to M' \to X \to C \to 0$ with $X \in \mathcal{F}(\Theta)$.
By Proposition \ref{prop_extproj}(1), there exists an exact sequence $0 \to K_{C} \to P_{C} \to C \to 0$ such that $K_{C} \in \mathcal{F}(\Theta)$ and $P_{C}\in {}^{\perp_{\Ext_{\mathcal{A}}^{1}}}\mathcal{F}(\Theta)$.
Taking a pull-back, we obtain a commutative diagram
\begin{align}
\xymatrix{
&&0\ar[d]&0\ar[d]\\
&&K_{C} \ar@{=}[r]\ar[d]&K_{C} \ar[d]  & &\\
0 \ar[r] &M' \ar[r]^-{\varphi}\ar@{=}[d]&E\ar[r]\ar[d]&P_{C} \ar[r]\ar[d]&0&\\
0 \ar[r] &M'\ar[r] &X \ar[r]\ar[d]&C \ar[d] \ar[r]&0.&\\
&&0&0&
}\notag
\end{align}
Since $\mathcal{F}(\Theta)$ is extension-closed, we have $E \in \mathcal{F}(\Theta)$.
Applying $\mathcal{A}(-,N)$ with $N\in\mathcal{F}(\Theta)$ to $0\rightarrow M'\xrightarrow{\varphi}E\rightarrow P_{C}\rightarrow 0$, we obtain an exact sequence
\begin{align}
\mathcal{A}(E,N)\xrightarrow{\mathcal{A}(\varphi,N)}\mathcal{A}(M',N)\rightarrow \Ext_{\mathcal{A}}^{1}(P_{C},N)=0,\notag
\end{align}
where the last equality follows from $P_{C}\in {}^{\perp_{\Ext_{\mathcal{A}}^{1}}}\mathcal{F}(\Theta)$.
This implies that the morphism $\varphi:M'\to E$ is a left $\mathcal{F}(\Theta)$-approximation of $M'$.
Thus the composition map of the canonical epimorphism $M \to M'$ and $\varphi: M' \to E$ is a left $\mathcal{F}(\Theta)$-approximation of $M$.
\end{proof}

\subsection{Mixed stratifying systems}
Let $\Bbbk$ be a field and $\mathcal{C}:=(\mathcal{C},\mathbb{E},\mathfrak{s},\mathbb{E}^{-1})$ a $\Bbbk$-linear Krull--Schmidt extriangulated category with a negative first extension.
In this subsection, we introduce the notion of mixed stratifying systems, which plays a crucial role in Dlab--Ringel's standardization method and a Ringel duality.
Moreover, mixed stratifying systems are a common generalization of stratifying systems \cite{ES03} and proper costratifying systems \cite{MPV11} in an abelian category, $\Theta$-projective systems in a triangulated category \cite{MS16} and $F$-projective systems in an exact category \cite{S19}.

\begin{definition}\label{def_mss}
Let $\Theta:=(\Theta(1), \Theta(2), \ldots, \Theta(t))$ be an ordered set of objects in $\mathcal{C}$.
\begin{itemize}
\item[(1)] Let $\mathbb{P}:=(P(1),P(2),\ldots, P(t))$ be an ordered set of indecomposable objects in $\mathcal{C}$.
A pair $(\Theta, \mathbb{P})$ is called a \emph{mixed stratifying system (of size $t$)} in $\mathcal{C}$ if it satisfies the following conditions.
\begin{itemize}
\item[(MSS1)] For each $i \in [1, t]$, $\Theta(i)$ is a brick or a stone.
\item[(MSS2)] If $i>j$, then $\mathcal{C}(\Theta(i),\Theta(j))=0$.
\item[(MSS3)] For each $i\in [1,t]$, there exists an $\mathfrak{s}$-conflation
\begin{align}
K(i)\to P(i)\xrightarrow{\alpha_{i}} \Theta(i)\dashrightarrow \notag
\end{align}
such that $K(i)\in \mathcal{F}(\Theta(\ge i))$ and $P(i)\in {}^{\perp_{\mathbb{E}}}\mathcal{F}(\Theta)$.
\item[(MSS4)] $\mathbb{E}^{-1}(\Theta,\Theta)=0$.
\end{itemize}
\item[(2)] Let $\mathbb{I}:=(I(1),I(2),\ldots, I(t))$ be an ordered set of  indecomposable objects in $\mathcal{C}$.
A pair $(\Theta, \mathbb{I})$ is called a \emph{mixed costratifying system (of size $t$)} in $\mathcal{C}$ if it satisfies the following conditions.
\begin{itemize}
\item[(MCS1)] For each $i \in [1, t]$, $\Theta(i)$ is a brick or a stone.
\item[(MCS2)] If $i>j$, then $\mathcal{C}(\Theta(i),\Theta(j))=0$.
\item[(MCS3)] For each $i\in [1,t]$, there exists an $\mathfrak{s}$-conflation
\begin{align}
\Theta(i)\to I(i)\to C(i)\dashrightarrow \notag
\end{align}
such that $I(i)\in \mathcal{F}(\Theta)^{\perp_{\mathbb{E}}}$ and $C(i)\in \mathcal{F}(\Theta(\le i))$.
\item[(MCS4)] $\mathbb{E}^{-1}(\Theta,\Theta)=0$.
\end{itemize}
\item[(3)] A triplet $(\Theta, \mathbb{P}, \mathbb{I})$ is called a \emph{mixed bistratifying system} in $\mathcal{C}$ if $(\Theta,\mathbb{P})$ is a mixed stratifying system and $(\Theta,\mathbb{I})$ is a mixed costratifying system.
\end{itemize}
\end{definition}

Note that a mixed costratifying system is a dual notion of a mixed stratifying system.

\begin{remark}\label{rem_op}
A pair $(\Theta, \mathbb{I})$ is a mixed costratifying system in $\mathcal{C}$ if and only if $(\Theta^{\mathrm{op}}, \mathbb{I}^{\mathrm{op}})$ is a mixed stratifying system in the opposite category $\mathcal{C}^{\mathrm{op}}$, where $\Theta^{\mathrm{op}}:=(\Theta'(1),\ldots, \Theta'(t))$ and $\mathbb{I}^{\mathrm{op}}:=(I'(1),\ldots, I'(t))$ with $\Theta'(i):=\Theta(t-i+1)$ and  $I'(i):=I(t-i+1)$.
\end{remark}

In the rest of this subsection, we give only results for mixed stratifying systems.
Now we study basic properties of mixed stratifying systems.
Let $(\Theta,\mathbb{P})$ be a mixed stratifying system in $\mathcal{C}$.
Applying $\mathcal{C}(-, \Theta(i))$ to the $\mathfrak{s}$-conflation in (MSS3) induces an exact sequence
\begin{align}
\mathbb{E}^{-1}(K(i), \Theta(i)) \to \mathcal{C}(\Theta(i), \Theta(i))\xrightarrow{\mathcal{C}(\alpha_{i},\Theta(i))} \mathcal{C}(P(i), \Theta(i)). \notag
\end{align}
By (MSS4), the left-hand side vanishes.
Thus the morphism $\alpha_{i}$ is non-zero.
This implies that $\alpha_{i}$ is right minimal.
Hence $\mathbb{P}$ is uniquely determined, that is, if $(\Theta, \mathbb{P}')$ is also a mixed stratifying system, then $P(i)\cong P'(i)$ for all $i\in[1,t]$, where $\mathbb{P}':=(P'(1),P'(2),\ldots, P'(t))$.
Moreover, we show that $\mathbb{P}$ is a basic object.
Suppose to the contrary that $P(i) \cong P(j)$ with $i>j$.
Then we have an isomorphism $\varphi: P(i) \to P(j)$.
By $\mathcal{C}(K(i), \Theta(j))=0$, there exists a non-zero morphism $\varphi': \Theta(i)\to \Theta(j)$.
This contradicts to (MSS2).
Consequently, $\mathbb{P}$ is basic.

Recall the notion of \emph{projective generators} of an extension-closed  subcategory $\mathcal{X}\subseteq \mathcal{C}$.
Note that $\mathcal{X}$ becomes an extriangulated category by restricting the extriangulated structure of $\mathcal{C}$.
An object $P \in \mathcal{X}$ is said to be \emph{projective} in  $\mathcal{X}$ if $P\in {}^{\perp_{\mathbb{E}}}\mathcal{X}$.
An object $X \in \mathcal{X}$ is called a \emph{generator} of $\mathcal{X}$ if for each $M \in \mathcal{X}$, there exists an $\mathfrak{s}$-conflation $K \to X' \to M \dashrightarrow$ such that $K \in \mathcal{X}$ and $X' \in \add X$.
Dually, we define an \emph{injective cogenerator} of $\mathcal{X}$.

\begin{proposition}\label{prop_mss}
Let $(\Theta, \mathbb{P})$ be a mixed stratifying system in $\mathcal{C}$.
Then the following statements hold.
\begin{itemize}
\item[(1)] If $i>j$, then $\mathbb{E}(\Theta(i),\Theta(j))=0$.
In particular, $\Theta$ is a mixed standardizable set in $\mathcal{C}$.
\item[(2)] $\mathbb{P}$ is a projective generator of $\mathcal{F}(\Theta)$. Moreover, we have $\add \mathbb{P}={}^{\perp_{\mathbb{E}}}\mathcal{F}(\Theta) \cap \mathcal{F}(\Theta)$.
\end{itemize}
\end{proposition}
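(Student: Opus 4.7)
For part (1), the plan is to apply Proposition \ref{prop_longex} to the $\mathfrak{s}$-conflation $K(i)\to P(i)\to \Theta(i)\dashrightarrow$ supplied by (MSS3), with the functor $\mathcal{C}(-,\Theta(j))$ for $j<i$. The resulting exact sequence
\[
\mathcal{C}(K(i),\Theta(j))\longrightarrow \mathbb{E}(\Theta(i),\Theta(j))\longrightarrow \mathbb{E}(P(i),\Theta(j))
\]
has vanishing right-hand term because $P(i)\in {}^{\perp_{\mathbb{E}}}\mathcal{F}(\Theta)$ and $\Theta(j)\in \mathcal{F}(\Theta)$, and vanishing left-hand term because $K(i)\in \mathcal{F}(\Theta(\ge i))$ combined with (MSS2) (which supplies $\mathcal{C}(\Theta(k),\Theta(j))=0$ for every $k\ge i>j$) and the general $\ast$-propagation of $\mathcal{C}$-vanishing recalled just before Lemma \ref{lem_extcl} forces $\mathcal{C}(\mathcal{F}(\Theta(\ge i)),\Theta(j))=0$. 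Hence $\mathbb{E}(\Theta(i),\Theta(j))=0$, which is exactly (MS3); (MS1), (MS2) and (MS4) are literally (MSS1), (MSS2) and (MSS4), so $\Theta$ is a mixed standardizable set.

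For (2), I would first observe that $\mathbb{P}$ is a projective object of $\mathcal{F}(\Theta)$. Indeed, (MSS3) places $P(i)$ as the middle term of an $\mathfrak{s}$-conflation whose outer terms lie in $\mathcal{F}(\Theta)$, so extension-closure forces $P(i)\in \mathcal{F}(\Theta)$; combined with (MSS3) this gives $\mathbb{P}\subseteq {}^{\perp_{\mathbb{E}}}\mathcal{F}(\Theta)\cap \mathcal{F}(\Theta)$. To show $\mathbb{P}$ generates $\mathcal{F}(\Theta)$, I would induct on the minimal filtration length of $M\in \mathcal{F}(\Theta)$. Using Lemma \ref{lem_filt} together with part (1) and Proposition \ref{prop_dirsummand}(1), write $M$ as the middle term of an $\mathfrak{s}$-conflation $M_{1}\to M\to \Theta(i)^{\oplus n}\dashrightarrow$ with $M_{1}\in \mathcal{F}(\Theta)$ of strictly shorter filtration. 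By induction there is an $\mathfrak{s}$-conflation $K'\to P'\to M_{1}\dashrightarrow$ with $P'\in \add\mathbb{P}$ and $K'\in \mathcal{F}(\Theta)$, and direct-summing $n$ copies of (MSS3) supplies $K(i)^{\oplus n}\to P(i)^{\oplus n}\to \Theta(i)^{\oplus n}\dashrightarrow$. Since $\mathbb{E}(P(i)^{\oplus n},M_{1})=0$, the map $P(i)^{\oplus n}\to \Theta(i)^{\oplus n}$ lifts along $M\to \Theta(i)^{\oplus n}$, and a horseshoe-type application of (ET4)$^{\op}$ then produces an $\mathfrak{s}$-conflation $K\to P'\oplus P(i)^{\oplus n}\to M\dashrightarrow$ in which $K$ itself fits into an $\mathfrak{s}$-conflation $K'\to K\to K(i)^{\oplus n}\dashrightarrow$, so $K\in \mathcal{F}(\Theta)$ by extension-closure.

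For the equality $\add\mathbb{P}={}^{\perp_{\mathbb{E}}}\mathcal{F}(\Theta)\cap \mathcal{F}(\Theta)$, the inclusion $\subseteq$ is just the projectivity already noted. For $\supseteq$, let $X\in {}^{\perp_{\mathbb{E}}}\mathcal{F}(\Theta)\cap \mathcal{F}(\Theta)$; the generator property provides an $\mathfrak{s}$-conflation $K\to P''\to X\dashrightarrow$ with $P''\in \add\mathbb{P}$ and $K\in \mathcal{F}(\Theta)$, and $\mathbb{E}(X,K)=0$ forces this $\mathfrak{s}$-conflation to split, so $X$ is a direct summand of $P''$ and lies in $\add\mathbb{P}$. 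The main obstacle I expect is the horseshoe construction in the second paragraph: one must splice the two given $\mathfrak{s}$-conflations via (ET4)$^{\op}$ using the chosen lift of $P(i)^{\oplus n}\to \Theta(i)^{\oplus n}$ along $M\to \Theta(i)^{\oplus n}$, and verify simultaneously that the new middle term is $P'\oplus P(i)^{\oplus n}$ and that the new kernel term sits in an $\mathfrak{s}$-conflation $K'\to K\to K(i)^{\oplus n}\dashrightarrow$ so as to remain inside $\mathcal{F}(\Theta)$.
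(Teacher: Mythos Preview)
Your proof is correct and follows essentially the same strategy as the paper. Part (1) is identical. For part (2), the paper organizes the inductive step slightly differently: rather than invoking a horseshoe directly, it first applies \cite[Proposition~3.15]{NP19} to the two conflations $M_{l-1}\to M\to C_{l}\dashrightarrow$ and $K'_{l}\to P'_{l}\to C_{l}\dashrightarrow$ to obtain a common pullback $E$, observes that $\mathbb{E}(P'_{l},M_{l-1})=0$ forces $E\cong M_{l-1}\oplus P'_{l}$, and only then applies (ET4)$^{\op}$ together with the inductive resolution $K_{l-1}\to P_{l-1}\to M_{l-1}\dashrightarrow$ to assemble the desired conflation $K_{l}\to P_{l-1}\oplus P'_{l}\to M\dashrightarrow$. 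Your horseshoe description compresses exactly these two moves into one, and the obstacle you flag is precisely where the paper unpacks the argument. The equality $\add\mathbb{P}={}^{\perp_{\mathbb{E}}}\mathcal{F}(\Theta)\cap\mathcal{F}(\Theta)$ is handled identically. One minor remark: your appeal to Proposition~\ref{prop_dirsummand}(1) is unnecessary, since Lemma~\ref{lem_filt} alone already supplies a filtration whose final step has the form $M_{1}\to M\to \Theta(i)^{\oplus n}\dashrightarrow$.
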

\begin{proof}
(1) Assume $i >j$.
By (MSS3), we have an $\mathfrak{s}$-conflation $K(i) \rightarrow P(i) \rightarrow \Theta(i) \dashrightarrow$.
Applying $\mathcal{C}(-, \Theta(j))$ to the $\mathfrak{s}$-conflation induces an exact sequence
\begin{align*}
\mathcal{C}(K(i), \Theta(j)) \rightarrow \mathbb{E}(\Theta(i), \Theta(j)) \rightarrow \mathbb{E}(P(i), \Theta(j)).
\end{align*}
The left-hand side vanishes by $K(i) \in \mathcal{F}(\Theta(\ge i))$ and (MSS2).
On the other hand, the right-hand side vanishes by $P(i) \in {}^{\perp_{\mathbb{E}}}\mathcal{F}(\Theta)$.
Thus we have $\mathbb{E}(\Theta(i),\Theta(j))=0$.

(2) First we prove that $\mathbb{P}$ is a projective generator of $\mathcal{F}(\Theta)$.
By (MSS3), $\mathbb{P}$ is projective in $\mathcal{F}(\Theta)$.
We show that $\mathbb{P}$ is a generator of $\mathcal{F}(\Theta)$.
Let $M\in \mathcal{F}(\Theta)$.
By Lemma \ref{lem_filt}, we have a sequence of $\mathfrak{s}$-inflations of $M$
\begin{align}
0=M_{0}\xrightarrow{f_{0}} M_{1}\xrightarrow{f_{1}} M_{2}\to \cdots \xrightarrow{f_{l-1}} M_{l}=M \notag
\end{align}
such that $C_{i}:=\cone (f_{i-1})\in \add \Theta(j_{i})$ for each $i\in [1,l]$.
By induction on $l$, we prove that there exists an $\mathfrak{s}$-conflation $K \to P \to M \dashrightarrow$ such that $K \in \mathcal{F}(\Theta)$ and $P \in \add \mathbb{P}$.
If $l=1$, then $M=M_{1}\in \add \Theta(j_{1})$.
Hence the assertion follows from (MSS3).
Assume $l \ge 2$.
Then we have the $\mathfrak{s}$-conflation $M_{l-1}\xrightarrow{f_{l-1}} M_{l}\to C_{l}\dashrightarrow$.
By (MSS3), there exists an $\mathfrak{s}$-conflation $K'_{l}\to P'_{l}\to C_{l}\dashrightarrow$ with $K'_{l}\in \mathcal{F}(\Theta)$ and $P'_{l}\in \add \mathbb{P}$.
Due to \cite[Proposition 3.15]{NP19}, we have a commutative diagram of $\mathfrak{s}$-conflations
\begin{align}
\xymatrix{
&K'_{l} \ar@{=}[r]\ar[d]&K'_{l} \ar[d] \\
M_{l-1} \ar[r]\ar@{=}[d] &E\ar[r] \ar[d]&P'_{l} \ar@{-->}[r]\ar[d]&\\
M_{l-1} \ar[r] &M_{l}\ar[r]\ar@{-->}[d] &C_{l} \ar@{-->}[d]\ar@{-->}[r]&.\\
&&&
}\notag
\end{align}
By $\mathbb{E}(P'_{l}, M_{l-1})=0$, the middle row of the diagram above splits.
Hence we obtain an $\mathfrak{s}$-conflation $K'_{l} \to M_{l-1}\oplus P'_{l} \to M_{l} \dashrightarrow$.
By the induction hypothesis, there exists an $\mathfrak{s}$-conflation $K_{l-1}\xrightarrow{\beta} P_{l-1}\xrightarrow{\alpha} M_{l-1}\dashrightarrow$ such that $K_{l-1}\in \mathcal{F}(\Theta)$ and $P_{l-1}\in \add \mathbb{P}$.
Applying  (ET4)$^{\op}$ in \cite[Definition 2.12]{NP19}, we have a commutative diagram of $\mathfrak{s}$-conflations
\begin{align}
\xymatrix{
K_{l-1} \ar[r] \ar@{=}[d] &K_{l} \ar[r]\ar[d]&K'_{l} \ar[d] \ar@{-->}[r] &\\
K_{l-1} \ar[r]^-{\tiny\left[\begin{smallmatrix}\beta\\ 0 \end{smallmatrix}\right]}&P_{l-1} \oplus P'_{l}\ar[r]^-{\tiny\left[\begin{smallmatrix}\alpha &0\\ 0 & 1\end{smallmatrix}\right]}\ar[d]&M_{l-1} \oplus P'_{l} \ar@{-->}[r]\ar[d]&.\\
&M_{l}\ar@{=}[r]\ar@{-->}[d] &M_{l} \ar@{-->}[d]&&\\
&&&
}\notag
\end{align}
Since $\mathcal{F}(\Theta)$ is extension-closed, we obtain $K_{l} \in \mathcal{F}(\Theta)$.
Therefore the middle column of the diagram above is the desired $\mathfrak{s}$-conflation.

Next we prove $\add \mathbb{P}={}^{\perp_{\mathbb{E}}}\mathcal{F}(\Theta) \cap \mathcal{F}(\Theta)$.
It is enough to show ${}^{\perp_{\mathbb{E}}}\mathcal{F}(\Theta) \cap \mathcal{F}(\Theta) \subseteq \add \mathbb{P}$.
Let $X \in{}^{\perp_{\mathbb{E}}}\mathcal{F}(\Theta) \cap  \mathcal{F}(\Theta)$.
Since $\mathbb{P}$ is a projective generator of $\mathcal{F}(\Theta)$, we have an $\mathfrak{s}$-conflation $K \to P \to X \dashrightarrow$ with $K \in \mathcal{F}(\Theta)$ and $P \in \add \mathbb{P}$.
By $X \in {}^{\perp_{\mathbb{E}}}\mathcal{F}(\Theta)$, the  $\mathfrak{s}$-conflation splits.
Hence we have $X \in \add \mathbb{P}$.
\end{proof}

Let $A$ be a basic finite dimensional $\Bbbk$-algebra with $n$ simple modules (up to isomorphisms).
By Proposition \ref{prop_mss}, we give a characterization of $\mathcal{F}(\Theta)$ to be a resolving subcategory of $\mod A$.
An extension-closed subcategory $\mathcal{X}$ of $\mod A$ is called a \emph{resolving subcategory} if $A \in \mathcal{X}$ and $\mathcal{X}$ is closed under direct summands and epikernels.

\begin{proposition}\label{prop_resol}
Let $(\Theta, \mathbb{P})$ be a mixed stratifying system of size $n$ in $\mod A$.
Then the following statements are equivalent.
\begin{itemize}
\item[(1)] $\mathcal{F}(\Theta)$ is a resolving subcategory of $\mod A$.
\item[(2)] $A \in \mathcal{F}(\Theta)$
\item[(3)] $\mathbb{P}\cong A$ $($as objects$)$.
\end{itemize}
\end{proposition}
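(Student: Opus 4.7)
The plan is to prove the cycle $(1) \Rightarrow (2) \Rightarrow (3) \Rightarrow (1)$. The implication $(1) \Rightarrow (2)$ is immediate from the definition of a resolving subcategory.

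For $(2) \Rightarrow (3)$, the key is that $A$ is projective in $\mod A$, so $\mathbb{E}(A,-) = \Ext_{A}^{1}(A,-) = 0$; in particular $A \in {}^{\perp_{\mathbb{E}}}\mathcal{F}(\Theta)$. Combined with the assumption $A \in \mathcal{F}(\Theta)$, Proposition \ref{prop_mss}(2) yields $A \in {}^{\perp_{\mathbb{E}}}\mathcal{F}(\Theta) \cap \mathcal{F}(\Theta) = \add \mathbb{P}$. Now $A$ is basic with exactly $n$ pairwise non-isomorphic indecomposable summands, and $\mathbb{P}$ was shown (in the discussion preceding Proposition \ref{prop_mss}) to be basic with exactly $n=t$ pairwise non-isomorphic indecomposable summands. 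Since $\add A \subseteq \add \mathbb{P}$ and both have the same number of isomorphism classes of indecomposable summands, we conclude $A \cong \mathbb{P}$ as objects.

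For $(3) \Rightarrow (1)$, I must verify the two defining conditions of a resolving subcategory. Since $\Theta$ is a mixed standardizable set by Proposition \ref{prop_mss}(1), the fact that $\mathcal{F}(\Theta)$ is closed under direct summands follows directly from Proposition \ref{prop_dirsummand}(2). For closure under kernels of epimorphisms, suppose $0 \to K \to X \to Y \to 0$ is exact in $\mod A$ with $X,Y \in \mathcal{F}(\Theta)$. Since $\mathbb{P} \cong A$ is a projective generator of $\mathcal{F}(\Theta)$ by Proposition \ref{prop_mss}(2), there exists a short exact sequence $0 \to L \to P \to Y \to 0$ with $L \in \mathcal{F}(\Theta)$ and $P \in \add A$. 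Taking the pullback of $X \to Y \leftarrow P$, one obtains a commutative diagram with exact rows and columns whose outer sequences are $0 \to K \to E \to P \to 0$ and $0 \to L \to E \to X \to 0$. The first splits because $P$ is projective in $\mod A$, so $E \cong K \oplus P$. The second then reads $0 \to L \to K \oplus P \to X \to 0$ with $L, X, P \in \mathcal{F}(\Theta)$, so $K \oplus P \in \mathcal{F}(\Theta)$ by extension-closedness, and hence $K \in \mathcal{F}(\Theta)$ by closure under direct summands.

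The only subtle point is the cardinality bookkeeping in $(2) \Rightarrow (3)$: one needs to know both that $\mathbb{P}$ is basic (established from (MSS2) together with the non-vanishing of $\alpha_{i}$) and that $\mathbb{P}$ has exactly $n$ summands. Once these are in hand, the remaining arguments are formal applications of the projectivity of $A$, of Proposition \ref{prop_mss}, and of the standard pullback trick with a projective object.
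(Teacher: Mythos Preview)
Your proof is correct and follows essentially the same approach as the paper's own proof: the same cycle $(1)\Rightarrow(2)\Rightarrow(3)\Rightarrow(1)$, the same appeal to Proposition~\ref{prop_mss}(2) together with the basicness and equal size of $A$ and $\mathbb{P}$ for $(2)\Rightarrow(3)$, and the same pull-back argument against a projective resolution in $\mathcal{F}(\Theta)$ for closure under epikernels in $(3)\Rightarrow(1)$. The only cosmetic difference is that the paper first concludes $E\in\mathcal{F}(\Theta)$ from extension-closedness and then splits off $\ker\varphi$, whereas you split first and then invoke extension-closedness; the content is identical.
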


\begin{proof}
We regard $\mod A$ as the extriangulated category with the negative first extension $\mathbb{E}^{-1}=0$ by Example \ref{ex_neg}(2).

(1)$\Rightarrow$(2): This is clear.

(2)$\Rightarrow$(3): By Proposition \ref{prop_mss}(2), $\add \mathbb{P}={}^{\perp_{\Ext_{A}^{1}}}\mathcal{F}(\Theta) \cap \mathcal{F}(\Theta)$.
Hence $A\in \mathcal{F}(\Theta)$ implies $A\in \add \mathbb{P}$.
Since $A$ and $\mathbb{P}$ are basic, we have $\mathbb{P}\cong A$.

(3)$\Rightarrow$(1):
Since $(\Theta, \mathbb{P})$ is a mixed stratifying system, $\mathcal{F}(\Theta)$ is extension-closed and closed under direct summands by Propositions \ref{prop_dirsummand}(2) and \ref{prop_mss}(1).
By $A\cong \mathbb{P} \in \mathcal{F}(\Theta)$, it is enough to prove that $\mathcal{F}(\Theta)$ is closed under epikernels.
Let $\varphi: X \to Y$ be an epimorphism with $X, Y \in \mathcal{F}(\Theta)$.
By Proposition \ref{prop_mss}(2), we have an exact sequence $0 \to K_{Y} \to P_{Y} \to Y \to 0$ with $K_{Y} \in \mathcal{F}(\Theta)$ and $P_{Y} \in \add \mathbb{P}=\add A$.
Taking a pull-back, we obtain a commutative diagram
\begin{align}
\xymatrix{
&&0\ar[d]&0\ar[d]\\
&&K_{Y} \ar@{=}[r]\ar[d]&K_{Y} \ar[d] & &\\
0 \ar[r] &\ker \varphi \ar[r]\ar@{=}[d]& E\ar[r]\ar[d]&P_{Y}  \ar[r]\ar[d]&0&\\
0 \ar[r] &\ker \varphi \ar[r] &X  \ar[r]\ar[d]&Y  \ar[d]\ar[r]&0.&\\
&&0&0&
}\notag
\end{align}
Since $\mathcal{F}(\Theta)$ is extension-closed, we have $E \in \mathcal{F}(\Theta)$.
By $P_{Y} \in \add A$, the middle row of the diagram above splits.
Hence $\ker \varphi \in \add E\subseteq \mathcal{F}(\Theta)$.
This completes the proof.
\end{proof}

We construct mixed stratifying systems from mixed standardizable sets.

\begin{theorem}\label{mainthm1}
Let $\Theta:=(\Theta(1), \ldots, \Theta(t))$ be a mixed standardizable set in $\mathcal{C}$ satisfying that $\add \Theta$ is $\mathbb{E}$-finite.
If $\mathcal{F}(\Theta)$ admits a finite sequence of universal extensions by $\Theta$, then the following statements hold.
\begin{itemize}
\item[(1)] For each $i\in [1,t]$, there exists an $\mathfrak{s}$-conflation $K(i) \to P(i) \xrightarrow{\alpha_{i}} \Theta(i) \dashrightarrow$ such that $K(i) \in \mathcal{F}(\Theta(\ge i))$, $P(i)$ is an indecomposable projective object in $\mathcal{F}(\Theta)$ and $\alpha_{i}$ is right minimal.
\item[(2)] Let $\mathbb{P}:=(P(1),\ldots, P(t))$. Then $(\Theta,\mathbb{P})$ is a mixed stratifying system in $\mathcal{C}$.
\end{itemize}
\end{theorem}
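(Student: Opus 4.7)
The plan is to reduce everything to Proposition \ref{prop_extproj} applied to $\mathcal{E} = \mathcal{F}(\Theta)$ with $M = \Theta(i)$ for each $i \in [1,t]$. First I would verify the hypotheses: $\mathcal{F}(\Theta)$ is extension-closed by definition, contains $\Theta$, is $\mathbb{E}$-finite (since $\add\Theta$ is, by assumption), and admits a finite sequence of universal extensions by $\Theta$ by hypothesis; conditions (MS1)--(MS3) hold because $\Theta$ is mixed standardizable. Thus Proposition \ref{prop_extproj}(1) produces an $\mathfrak{s}$-conflation
\[
K(i) \to P(i) \xrightarrow{\alpha_i} \Theta(i) \dashrightarrow
\]
with $P(i) \in {}^{\perp_{\mathbb{E}}}\mathcal{F}(\Theta) \cap \mathcal{F}(\Theta)$, $\alpha_i$ right minimal, and either $K(i) = 0$ or $K(i) \in \mathcal{F}(\Theta(\ge j_i))$, where $j_i$ denotes the minimum index with $\mathbb{E}(\Theta(i), \Theta(j_i)) \ne 0$. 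Since $\mathbb{E}(\Theta(i), \Theta(l)) = 0$ for all $l < i$ by (MS3), one has $j_i \ge i$, so $K(i) \in \mathcal{F}(\Theta(\ge i))$ in any case.

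Next I would establish the indecomposability of $P(i)$. If $\Theta(i) \in {}^{\perp_{\mathbb{E}}}\mathcal{F}(\Theta)$, then $P(i) \cong \Theta(i)$ is indecomposable by (MS1) and we are done. Otherwise, I would invoke Proposition \ref{prop_extproj}(2) after a short case split on $j_i$: if $j_i = i$, then $\Theta(i) \in \add \Theta(j_i)$ trivially; if $j_i > i$, then (MS2) gives $\mathcal{C}(\Theta(l), \Theta(i)) = 0$ for all $l \ge j_i$, hence $\mathcal{C}(\mathcal{F}(\Theta(\ge j_i)), \Theta(i)) = 0$, i.e.\ $\Theta(i) \in \mathcal{F}(\Theta(\ge j_i))^{\perp_{\mathcal{C}}}$. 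Either case satisfies the hypothesis of Proposition \ref{prop_extproj}(2), which yields the indecomposability of $P(i)$ and completes part (1).

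For part (2), the axioms of a mixed stratifying system follow directly: (MSS1), (MSS2) and (MSS4) are respectively (MS1), (MS2) and (MS4) for the mixed standardizable set $\Theta$, while (MSS3) is exactly the $\mathfrak{s}$-conflation from part (1) together with $P(i) \in {}^{\perp_{\mathbb{E}}}\mathcal{F}(\Theta)$. The only delicate step in the whole argument is the indecomposability of $P(i)$, which hinges on choosing the correct case of Proposition \ref{prop_extproj}(2); this is the main place where (MS2) and (MS3) are used jointly, and the crucial observation $j_i \ge i$ forced by (MS3) is what makes the case split work.
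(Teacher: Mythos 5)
Your proposal is correct and follows essentially the same route as the paper: both reduce Theorem \ref{mainthm1}(1) to Proposition \ref{prop_extproj} with $\mathcal{E}=\mathcal{F}(\Theta)$, use (MS3) to force $j_i\ge i$, and obtain indecomposability of $P(i)$ from Proposition \ref{prop_extproj}(2) via the same dichotomy (your split $j_i=i$ versus $j_i>i$ is just the paper's split $\mathbb{E}(\Theta(i),\Theta(i))\neq 0$ versus $=0$ in disguise), with part (2) read off identically. No gaps.
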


\begin{proof}
(1) Fix an integer $i\in [1,t]$.
If $\Theta(i)\in{}^{\perp_{\mathbb{E}}}\mathcal{F}(\Theta)$, then the assertion follows from Proposition \ref{prop_extproj}(1).
In the following, we assume $\Theta(i)\not\in{}^{\perp_{\mathbb{E}}}\mathcal{F}(\Theta)$.
By Proposition \ref{prop_extproj}(1), there exists an $\mathfrak{s}$-conflation $K(i) \to P(i) \xrightarrow{\alpha_{i}} \Theta(i) \dashrightarrow$ such that $K(i)\in \mathcal{F}(\Theta(\ge j))$, $P(i)\in {}^{\perp_{\mathbb{E}}}\mathcal{F}(\Theta) \cap \mathcal{F}(\Theta)$ and $\alpha_{i}$ is right minimal, where $j$ is the minimum value satisfying $\mathbb{E}(\Theta(i),\Theta(j))\neq 0$.
It follows from (MS3) that $j \ge i$.
We show that $P(i)$ is indecomposable.
If $\mathbb{E}(\Theta(i),\Theta(i))=0$, then $j \neq i$, and hence $\Theta(i)\in \mathcal{F}(\Theta(\ge j))^{\perp_{\mathcal{C}}}$ by (MS2).
If otherwise, we obtain $\Theta(i)=\Theta(j)$.
Hence the assertion follows from Proposition \ref{prop_extproj}(2).

(2) (MSS1), (MSS2) and (MSS4) follow from (MS1), (MS2) and (MS4) respectively.
(MSS3) follows from (1).
Hence $(\Theta, \mathbb{P})$ is a mixed stratifying system.
\end{proof}

As an application of Theorem \ref{mainthm1}, we have the following result.

\begin{corollary}\label{cor_mainthm1}
Let $A$ be a finite dimensional $\Bbbk$-algebra and $(e_{1},e_{2},\ldots, e_{n})$ a complete ordered set of primitive orthogonal idempotents of $A$.
Let $\Theta:=(\Theta(1),\Theta(2),\ldots,\Theta(n))$ with $\Theta(i)\in \{ \Delta(i), \overline{\Delta}(i)\}$.
Then there exists a projective object $\mathbb{P}$ in $\mathcal{F}(\Theta)$ such that $(\Theta, \mathbb{P})$ is a mixed stratifying system in $\mod A$.
If in addition, $\Theta(i)=\Delta(i)$ holds for all $i \in [1, n]$, then there exists an injective object $\mathbb{I}$ in $\mathcal{F}(\Theta)$ such that $(\Theta, \mathbb{P}, \mathbb{I})$ is a mixed  bistratifying system in $\mod A$.
\end{corollary}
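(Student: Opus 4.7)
The plan is to derive the first assertion from Proposition~\ref{prop_mixstd} and Theorem~\ref{mainthm1}, and then obtain the injective object $\mathbb{I}$ in the standard case via the dual construction.

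First I would invoke Proposition~\ref{prop_mixstd} to see that $\Theta$ is a mixed standardizable set in $\mod A$ and that $\mod A$ admits a finite sequence of universal extensions by $\Theta$. Since $A$ is finite dimensional, $\mod A$ is $\mathbb{E}$-finite, so in particular $\add\Theta$ is $\mathbb{E}$-finite. The subcategory $\mathcal{F}(\Theta)$ is extension-closed in $\mod A$ and contains $\Theta$; moreover, each intermediate object $P_s$ appearing in the universal extension sequence of Lemma~\ref{lem_brick-uniex} belongs to $\mathcal{F}(\Theta)$ whenever the starting module $M$ does, because each step is an $\mathfrak{s}$-conflation $\Theta(i)^{\oplus d_s}\to P_s\to P_{s-1}\dashrightarrow$ with outer terms in $\mathcal{F}(\Theta)$. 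Hence termination of the process inside $\mod A$ forces termination inside $\mathcal{F}(\Theta)$, so $\mathcal{F}(\Theta)$ itself admits a finite sequence of universal extensions by $\Theta$. Applying Theorem~\ref{mainthm1} now yields the projective object $\mathbb{P}=(P(1),\ldots,P(n))$ in $\mathcal{F}(\Theta)$ realising $(\Theta,\mathbb{P})$ as a mixed stratifying system in $\mod A$.

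For the second assertion, assume $\Theta(i)=\Delta(i)$ for every $i$. Each standard module $\Delta(i)$ is a stone, so by the dual of Proposition~\ref{prop_stone-univex} the category $\mathcal{F}(\Theta)$ admits a finite sequence of universal coextensions by $\Theta$. This is the crucial point that is unavailable for proper standard modules in general: they are only bricks, not stones, so the termination argument of Proposition~\ref{prop_stone-univex} need not dualise, which is exactly why the bistratifying statement is restricted to the standard case. Applying the dual of Theorem~\ref{mainthm1} — obtained formally by passing to the opposite category via Remark~\ref{rem_op} — then produces an injective object $\mathbb{I}=(I(1),\ldots,I(n))$ in $\mathcal{F}(\Theta)$ together with $\mathfrak{s}$-conflations $\Theta(i)\to I(i)\to C(i)\dashrightarrow$ satisfying (MCS3). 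Combined with the first assertion this gives the mixed bistratifying system $(\Theta,\mathbb{P},\mathbb{I})$.

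The only technical point requiring care is that the universal extension/coextension procedures performed within the ambient category $\mod A$ descend to $\mathcal{F}(\Theta)$ with the same termination, but as noted this is immediate from extension-closedness together with the shape of the defining $\mathfrak{s}$-conflations. Beyond this, the corollary is essentially a packaging of Propositions~\ref{prop_mixstd} and \ref{prop_stone-univex} with Theorem~\ref{mainthm1} and its dual, so no further substantive obstacle arises.
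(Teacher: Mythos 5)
Your argument is correct and follows essentially the same route as the paper: Proposition~\ref{prop_mixstd} plus Theorem~\ref{mainthm1} for the projective part, and the stone property of the $\Delta(i)$ together with the dual of Proposition~\ref{prop_stone-univex} and the dual of Theorem~\ref{mainthm1} (via Remark~\ref{rem_op}) for the injective part. The only difference is that you spell out why termination of the universal-extension process over $\mod A$ restricts to $\mathcal{F}(\Theta)$, a point the paper leaves implicit; your justification of it is sound, since the defining condition only quantifies over objects of the subcategory while the extensions are formed in the ambient category.
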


\begin{proof}
We regard $\mod A$ as the extriangulated category with the negative first extension $\mathbb{E}^{-1}=0$ by Example \ref{ex_neg}(2).
By Proposition \ref{prop_mixstd}, $\Theta=(\Theta(1), \ldots, \Theta(n))$ is a mixed standardizable set and
$\mod A$ admits a finite sequence of universal extensions by $\Theta$.
Thus, by Theorem \ref{mainthm1}(2), there exists a projective object $\mathbb{P}$ in $\mathcal{F}(\Theta)$ such that $(\Theta, \mathbb{P})$ is a mixed stratifying system in $\mod A$.
Assume $\Theta(i)=\Delta(i)$ for all $i \in [1,n]$.
Since each $\Theta(i)$ is a stone, $\mod A$ admits a finite sequence of universal coextensions by $\Theta$.
By the dual statement of Theorem \ref{mainthm1}, there exists an injective object $\mathbb{I}$ in $\mathcal{F}(\Theta)$ such that $(\Theta, \mathbb{I})$ is a mixed costratifying system in $\mod A$.
This completes the proof.
\end{proof}

By Proposition \ref{prop_brick-univex}(i) and Theorem \ref{mainthm1}, we give an example of a mixed stratifying system.

\begin{example}\label{ex_mbst}
Let $A$ be a finite dimensional $\Bbbk$-algebra and let $\Theta$ be a mixed standardizable set in $\mod A$.
Assume that there exists an integer $l>0$ such that, for each indecomposable $A$-module $M \in \mathcal{F}(\Theta)$, the length of $M$ is bounded by $l$.
Then we have a mixed stratifying system $(\Theta, \mathbb{P})$ by Proposition \ref{prop_brick-univex}(i) and Theorem \ref{mainthm1}(2).
Moreover, there exists a mixed costratifying system $(\Theta, \mathbb{I})$ by the dual statements of Proposition \ref{prop_brick-univex}(i) and Theorem \ref{mainthm1}(2).
Therefore we obtain a mixed bistratifying system $(\Theta, \mathbb{P}, \mathbb{I})$.
In particular, we can recover \cite[Theorem 3.11]{MPV14}.
\end{example}

\section{Standardization and Ringel duality for mixed stratified algebras}
In this section, we construct mixed stratified algebras following Dlab--Ringel's standardization method and study Ringel duality of mixed stratified algebras from the viewpoint of mixed stratifying systems.

\subsection{Mixed stratified algebras}
In this subsection, we collect properties of mixed stratified algebras introduced by \'Agoston, Dlab and Luk\'acs \cite{ADL98}.
First let us recall the definition of mixed stratified algebras.
Let $A$ be a basic finite dimensional $\Bbbk$-algebra.
Fix a complete ordered set $(e_{1},e_{2},\ldots, e_{n})$ of primitive orthogonal idempotents of $A$.
We regard $A$ as an ordered set $(e_{1}A, e_{2}A, \ldots, e_{n}A)$.
For each $i \in [1, n]$, let $\Delta(i)$ be the $i$-th standard module and $\overline{\Delta}(i)$ the $i$-th proper standard module. 
Let $\Delta:=(\Delta(1),\Delta(2),\ldots,\Delta(n))$ denote the ordered set of standard modules and $\overline{\Delta}:=(\overline{\Delta}(1),\overline{\Delta}(2),\ldots,\overline{\Delta}(n))$ denote the ordered set of proper standard modules.

\begin{definition}[{\cite[Definition 1.3]{ADL98}}]
Let $A$ be a finite dimensional $\Bbbk$-algebra and let  $\Theta:=(\Theta(1), \ldots, \Theta(n))$ be an ordered set of $A$-modules.
We call $A$ a \emph{mixed stratified algebra} with respect to $\Theta$ if $A \in \mathcal{F}(\Theta)$ and $\Theta(i)\in \{ \Delta(i), \overline{\Delta}(i)\}$ for each $i\in [1,n]$.
\end{definition}

We call $A$ a \emph{standardly stratified algebra} with respect to $\Delta$ if $A \in \mathcal{F}(\Delta)$. For details, see \cite{CPS96, D96}.
A typical example of a mixed stratified algebra is a standardly stratified algebra.
We give a concrete example of mixed stratified algebras.

\begin{example}
Assume that $\Bbbk$ is an algebraically closed field.
Let $A$ be the $\Bbbk$-algebra defined by the quiver
\begin{align}
\xymatrix{1\ar@<0.5ex>[r]^-{a} & 2 \ar@<0.5ex>[r]^-{b} \ar@<0.5ex>[l]^-{c} &3 \ar@<0.5ex>[l]^-{d}
 }\notag
\end{align}
with relations $ab, dc, aca$ and $dbd$.
Then $\Delta(1)=\overline{\Delta}(1)=\top e_{1}A$, $\Delta(2)=\cok (e_{3}A\xrightarrow{b\cdot -} e_{2}A)$, $\Delta(3)=e_{3}A$, $\overline{\Delta}(2)=\cok (\Delta(2)\xrightarrow{ca\cdot-}\Delta(2))$, and $\overline{\Delta}(3)=e_{3}A/\soc e_{3}A$.
We put $\Theta(1):=\Delta(1)$, $\Theta(2):=\overline{\Delta}(2)$, and $\Theta(3):=\Delta(3)$.
Then $A$ is a mixed stratified algebra with respect to $\Theta$ but not a standardly stratified algebra with respect to $\Delta$.
\end{example}

We describe the left-right symmetry of mixed stratified algebras. 
Let $A$ be a finite dimensional $\Bbbk$-algebra and let $\mathsf{D}:=\Hom_{\Bbbk}(-,\Bbbk)$. 
For each $i \in [1, n]$, let $\nabla(i)$ denote the $i$-th costandard module and $\overline{\nabla}(i)$ the $i$-th proper costandard module. 
Let $\Xi(i)\in\{\nabla(i), \overline{\nabla}(i)\}$ for each $i\in\{1,n\}$. 
Since $\mathsf{D}\nabla(i)$ is the $i$-th standard left $A$-module and $\mathsf{D}\overline{\nabla}(i)$ is the $i$-th proper standard left $A$-module, $\mathsf{D}\Xi:=(\mathsf{D}\Xi(1),\ldots, \mathsf{D}\Xi(n))$ is a mixed standardizable set in $\mod A^{\op}$ by Proposition \ref{prop_mixstd}. 

\begin{remark}[{\cite[Lemma 2.2]{D96}, \cite[Theorem 2.2]{ADL98}}]\label{rem_mixstr}
An algebra $A$ is a mixed stratified algebra with respect to $\Theta:=(\Theta(1),\ldots,\Theta(n))$ if and only if $A^{\op}$ is a mixed stratified algebra with respect to $\mathsf{D}\Xi:=(\mathsf{D}\Xi(1), \ldots, \mathsf{D}\Xi(n))$, where
\begin{align}
\Xi(i):=\left\{
\begin{array}{ll}
\overline{\nabla}(i) & \textnormal{if } \Theta(i)=\Delta(i)\\ 
\nabla(i) & \textnormal{if } \Theta(i)=\overline{\Delta}(i). \notag
\end{array}
\right.
\end{align}
\end{remark}

In the following, we regard $\mod A$ as the extriangulated category with the negative first extension $\mathbb{E}^{-1}=0$ by Example \ref{ex_neg}(2).
The aim of this subsection is to give characterizations of mixed stratified algebras in terms of mixed stratifying systems. 

\begin{theorem}\label{thm_mixed-bistr}
Let $A$ be a finite dimensional $\Bbbk$-algebra and let $\Theta:=(\Theta(1), \ldots, \Theta(n))$ be an ordered set of $A$-modules.
Then the following statements  are equivalent.
\begin{itemize}
\item[(1)] $A$ is a mixed stratified algebra with respect to $\Theta$.
\item[(2)] The pair $(\Theta, A)$ is a mixed stratifying system in $\mod A$.
\item[(3)] There exists an ordered set $\mathbb{T}:=(T(1), \ldots, T(n))$ of $A$-modules such that $(\Theta, A, \mathbb{T})$ is a mixed bistratifying system. 
\end{itemize}
\end{theorem}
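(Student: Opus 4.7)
The plan is to treat (3)$\Rightarrow$(2) as immediate from the definition of a mixed bistratifying system, to establish (1)$\Leftrightarrow$(2) by combining Theorem \ref{mainthm1} with Propositions \ref{prop_mss} and \ref{prop_resol}, and to deduce (2)$\Rightarrow$(3) using the left-right symmetry from Remark \ref{rem_mixstr}. Throughout I regard $\mod A$ as an extriangulated category with $\mathbb{E}^{-1}=0$ via Example \ref{ex_neg}(2), so the conditions (MSS4) and (MCS4) hold automatically.

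For (1)$\Rightarrow$(2): Proposition \ref{prop_mixstd} guarantees that $\Theta$ is a mixed standardizable set and that $\mod A$ admits a finite sequence of universal extensions by $\Theta$; in particular this holds in $\mathcal{F}(\Theta)$. Theorem \ref{mainthm1} then produces a mixed stratifying system $(\Theta,\mathbb{P})$ in $\mod A$, where each $P(i)$ is an indecomposable projective object of $\mathcal{F}(\Theta)$. Since $A\in\mathcal{F}(\Theta)$ by assumption, Proposition \ref{prop_resol} forces $\mathbb{P}\cong A$ as basic modules, and matching tops via the right-minimal surjections $P(i)\twoheadrightarrow\Theta(i)$ (whose tops are forced to be $S(i)$) lets us identify $P(i)=e_iA$ in the given ordering. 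Hence $(\Theta,A)$ is a mixed stratifying system.

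For (2)$\Rightarrow$(1): Proposition \ref{prop_mss}(2) gives that $A$ is a projective generator of $\mathcal{F}(\Theta)$, so $A\in\mathcal{F}(\Theta)$. It remains to verify $\Theta(i)\in\{\Delta(i),\overline{\Delta}(i)\}$. The conflation $0\to K(i)\to e_iA\to\Theta(i)\to 0$ of (MSS3) forces $\top\Theta(i)=S(i)$, and the constraint $K(i)\in\mathcal{F}(\Theta(\ge i))$ combined with downward induction on $i$, the brick/stone dichotomy (MSS1), and the vanishing properties guaranteed by Proposition \ref{prop_mss}(1) together pin down $\Theta(i)$ uniquely: when $\Theta(i)$ is a stone it must coincide with the standard module $\Delta(i)$ (the maximal quotient of $e_iA$ with composition factors in $\{S(1),\dots,S(i)\}$), and when it is a brick it must coincide with the proper standard module $\overline{\Delta}(i)$ (the maximal quotient whose radical has composition factors in $\{S(1),\dots,S(i-1)\}$).

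For (2)$\Rightarrow$(3): Using (2)$\Rightarrow$(1), $A$ is mixed stratified with respect to $\Theta$, and by Remark \ref{rem_mixstr} the opposite algebra $A^{\op}$ is mixed stratified with respect to $\mathsf{D}\Xi$. Applying (1)$\Rightarrow$(2) in $\mod A^{\op}$ yields a mixed stratifying system $(\mathsf{D}\Xi,A^{\op})$. The principal obstacle is that direct dualization via $\mathsf{D}$ gives a mixed costratifying system $(\Xi,\mathsf{D}A)$ in $\mod A$ whose standardizable set is $\Xi$ rather than $\Theta$, so a separate construction of $\mathbb{T}$ is required. My plan is to build $T(i)$ as the indecomposable injective envelope of $\Theta(i)$ inside $\mathcal{F}(\Theta)$ by iteratively forming universal coextensions of $\Theta(i)$ by $\Theta(i-1),\Theta(i-2),\dots,\Theta(1)$; termination is immediate for the stone entries by the dual of Proposition \ref{prop_stone-univex}, while for the brick entries $\overline{\Delta}(j)$ termination has to be extracted from the finite-dimensionality of $A$ together with the mixed stratified structure on $A^{\op}$, and this is the main technical hurdle. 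Once $\mathbb{T}$ is constructed, verifying (MCS1)--(MCS3) for $(\Theta,\mathbb{T})$ is routine, yielding the mixed bistratifying system $(\Theta,A,\mathbb{T})$.
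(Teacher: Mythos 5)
Your outline breaks down at the implication (2)$\Rightarrow$(3) (equivalently (1)$\Rightarrow$(3)), and this is not a cosmetic omission: the construction of the ordered set $\mathbb{T}$ is the whole content of that direction, and you leave precisely its hard step unproved. Your plan is to build each $T(i)$ by iterating universal coextensions of $\Theta(i)$ by $\Theta(i-1),\dots,\Theta(1)$, and you yourself flag termination at the brick entries $\overline{\Delta}(j)$ as ``the main technical hurdle'' to be ``extracted'' from finite-dimensionality and the structure of $A^{\op}$ --- but no extraction is given. Neither of the available criteria applies: the dual of Proposition \ref{prop_brick-univex}(ii) would require an Ext-vanishing condition of the form $\Ext_A^{1}(\overline{\Delta}(j)/\soc \overline{\Delta}(j),\overline{\Delta}(j))=0$, and proper standard modules are controlled by their tops, not their socles, so there is no reason for this to hold; and the bounded-length condition (i) fails for $\mathcal{F}(\Theta)$ in general. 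The paper does not argue this way at all: it proves (1)$\Rightarrow$(3) by reproducing Brundan--Stroppel's construction (Proposition \ref{prop-BS}), an induction on $n$ through the truncation $\varepsilon_{2}A\varepsilon_{2}$ and the functor $-\otimes_{\varepsilon_{2}A\varepsilon_{2}}\varepsilon_{2}A$, in which the only universal coextension ever taken is by $\Theta(1)$ in the case $\Theta(1)=\Delta(1)$ (one step suffices because $\Ext_A^{1}(\Theta(1),\Theta(1))=0$), while the case $\Theta(1)=\overline{\Delta}(1)$ is handled by passing to $A^{\op}$ via Remark \ref{rem_mixstr}; indecomposability of $T(i)$ is propagated through the induction by the auxiliary condition that $T(i)e_{i}$ is an indecomposable $e_{i}Ae_{i}$-module. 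Your proposal contains no substitute for this mechanism, so the equivalence with (3) is not established.

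Your (2)$\Rightarrow$(1) is also only an assertion of what needs to be proved. Saying that the brick/stone dichotomy, downward induction, and Proposition \ref{prop_mss}(1) ``pin down'' $\Theta(i)$ skips the actual identification (the paper's Lemma \ref{lem_pstandard}): one first needs $\Theta(i)\in\mathcal{F}(S(1)\oplus\cdots\oplus S(i))$, which comes from $\Hom_A(e_{j}A,\Theta(i))=0$ for $j>i$ via (MSS2) and the conflations in (MSS3), not from the Ext-vanishing of Proposition \ref{prop_mss}(1); then a pushout argument showing there is no epimorphism $K(i)\to\Theta(i)^{\oplus l}$, $l\neq 0$, with vanishing induced extension (otherwise $e_{i}A$ would surject onto $\Theta(i)^{\oplus l+1}$, contradicting its simple top); in the stone case this forces $K(i)\in\mathcal{F}(\Theta(>i))$ and hence $\Theta(i)=\Delta(i)$, and in the brick case it shows $\Hom_A(\alpha_{i},\Theta(i))$ is an isomorphism, whence $\Hom_A(e_{i}A,\rad\Theta(i))=0$ and a maximality comparison with $\Delta(i)$ gives $\Theta(i)=\overline{\Delta}(i)$. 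By contrast, your (1)$\Rightarrow$(2) via Proposition \ref{prop_mixstd}, Theorem \ref{mainthm1} and Proposition \ref{prop_resol}, with the identification $P(i)\cong e_{i}A$ by matching tops, is sound (it is essentially Corollary \ref{cor_mainthm1} plus Proposition \ref{prop_resol}), but it cannot close the cycle without the missing leg to (3).
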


To show Theorem \ref{thm_mixed-bistr}, we need the following results.

\begin{lemma}\label{lem_pstandard}
Let $A$ be a finite dimensional $\Bbbk$-algebra.
Assume that $(\Theta, A)$ is a mixed stratifying system in $\mod A$.
Then the following statements hold.
\begin{itemize}
\item[(1)] If $\Theta(i)$ is a stone, then $\Theta(i)=\Delta(i)$.
\item[(2)] If $\Theta(i)$ is a brick, then $\Theta(i)=\overline{\Delta}(i)$.
\end{itemize}
\end{lemma}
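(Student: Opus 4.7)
The strategy is to identify the projective summands of the mixed stratifying system $(\Theta, A)$ with the indecomposable projectives $e_iA$, and then use the maximality characterizations defining $\Delta(i)$ and $\overline{\Delta}(i)$ together with the stone/brick hypothesis to pin down $\Theta(i)$. Concretely, since $A \in \mathcal{F}(\Theta)$ holds tautologically, Proposition \ref{prop_resol} yields $\mathbb{P} \cong A$ as basic objects, so there is a permutation $\sigma$ of $[1,n]$ with $P(i)=e_{\sigma(i)}A$; hence each $\Theta(i)$ is an indecomposable quotient of $e_{\sigma(i)}A$ with simple top $S(\sigma(i))$.

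Next, I would establish $\sigma = \id$ and, in parallel, that the composition factors of each $\Theta(i)$ lie in $\{S(1),\ldots,S(i)\}$. Combining (MSS2), (MSS3), and the filtration $A \in \mathcal{F}(\Theta(n))\ast\cdots\ast\mathcal{F}(\Theta(1))$ from Proposition \ref{prop_dirsummand}(1), one matches the tops of the successive filtration layers of each $P(i)$ against $A/\rad A = \bigoplus_i S(i)$; the triangular Hom-vanishing in (MSS2) forces the bijection $\sigma$ to be the identity. The composition-factor bound then follows inductively from the conflation $K(i)\to e_iA \to \Theta(i)$ with $K(i) \in \mathcal{F}(\Theta(\ge i))$, since by induction the composition factors of each $\Theta(j)$ with $j\ge i$ already lie in $\{S(1),\ldots,S(j)\}$. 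Consequently $\Theta(i)$ is a quotient of $\Delta(i)$ by the defining maximality of $\Delta(i)$. The final step is to refine this quotient using the stone or brick hypothesis. In the brick case, $\rad\Theta(i)$ cannot contain $S(i)$ as a composition factor, for otherwise the composition $\Theta(i)\twoheadrightarrow S(i) \hookrightarrow \rad\Theta(i) \hookrightarrow \Theta(i)$ would be a nonzero non-invertible endomorphism, contradicting $\End_{A}(\Theta(i))=\Bbbk$; hence $\rad\Theta(i)\in\mathcal{F}(S(1)\oplus\cdots\oplus S(i-1))$ and the defining maximality of $\overline{\Delta}(i)$ provides a surjection $\overline{\Delta}(i)\twoheadrightarrow \Theta(i)$. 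In the stone case, $\Ext_{A}^{1}(\Theta(i),\Theta(i))=0$ together with the fact that $\Delta(i)$ is itself a stone excludes $\Theta(i)$ from being a proper stone quotient of $\Delta(i)$, via the universal-extension argument behind Proposition \ref{prop_stone-univex}.

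I expect the main obstacle to be the final equalities $\Theta(i)=\overline{\Delta}(i)$ (respectively $\Theta(i)=\Delta(i)$), rather than merely surjections. Passing to a proper quotient of $\overline{\Delta}(i)$ (resp.\ $\Delta(i)$) would enlarge $K(i)$ by additional composition factors whose presence must be reconciled with $K(i)\in \mathcal{F}(\Theta(\ge i))$; verifying that such an enlargement is forbidden requires careful combinatorial bookkeeping using (MSS2), (MSS3), and the inductively-known structure of the higher $\Theta(j)$'s. This rigidity argument for the kernel of $e_iA\to\Theta(i)$ is where the bulk of the technical work will lie.
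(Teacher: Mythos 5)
Your overall skeleton (realize $\Theta(i)$ as a quotient of $e_iA$, bound its composition factors by $S(1),\ldots,S(i)$, then invoke the maximality defining $\Delta(i)$ and $\overline{\Delta}(i)$) matches the paper's strategy, but several load-bearing steps are either circular or false as stated, and the hardest step is explicitly deferred. First, the permutation discussion is unnecessary: the hypothesis already says $(\Theta,A)$ with $A=(e_1A,\ldots,e_nA)$ is a mixed stratifying system, so $P(i)=e_iA$ with the given index. Second, your composition-factor bound is circular: knowing the factors of $K(i)$ (which is filtered by $\Theta(\ge i)$, including $\Theta(i)$ itself) says nothing about which factors of $e_iA$ survive in the quotient $\Theta(i)=e_iA/K(i)$. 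The paper instead applies $\Hom_A(-,\Theta(i))$ to the conflation for the \emph{higher} index $j>i$, getting $\Hom_A(e_jA,\Theta(i))=0$ from (MSS2) applied to both $\Theta(j)$ and $K(j)\in\mathcal{F}(\Theta(\ge j))$; this mechanism is absent from your sketch. Third, in the brick case your key claim that $S(i)$ cannot be a composition factor of $\rad\Theta(i)$ is not a consequence of the brick property: a composition factor of the radical need not embed as a submodule, so the composite $\Theta(i)\twoheadrightarrow S(i)\hookrightarrow\rad\Theta(i)$ does not exist in general (only the socle of a brick avoids its top; there are bricks whose radical contains the top simple as a factor). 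The actual proof of $\Hom_A(e_iA,\rad\Theta(i))=0$ uses the paper's central lemma, which you never formulate: there is no epimorphism $f\colon K(i)\to\Theta(i)^{\oplus l}$, $l\neq0$, whose connecting class vanishes, because the pushout would yield an epimorphism $e_iA\twoheadrightarrow\Theta(i)^{\oplus l+1}$, impossible for a module with simple top. From this one deduces that the connecting map $\Hom_A(K(i),\Theta(i))\to\Ext_A^1(\Theta(i),\Theta(i))$ is injective, hence $\Hom(\alpha_i,\Theta(i))$ is an isomorphism, and then the division-ring argument kills $\Hom_A(e_iA,\rad\Theta(i))$.

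The same missing lemma is what settles the stone case: writing $0\to K'\to K(i)\to\Theta(i)^{\oplus d}\to0$ with $K'\in\mathcal{F}(\Theta(>i))$, the stone hypothesis forces the connecting class to vanish, so $d=0$, $K(i)\in\mathcal{F}(\Theta(>i))$, and $\Theta(i)$ is then exactly the maximal quotient of $e_iA$ in $\mathcal{F}(S(1)\oplus\cdots\oplus S(i))$, i.e.\ $\Delta(i)$. Your appeal to ``the universal-extension argument behind Proposition \ref{prop_stone-univex}'' does not supply this, and you give no reason why $\Theta(i)$ cannot be a proper stone quotient of $\Delta(i)$ (indeed $\overline{\Delta}(i)$ can be a stone; it is the constraint $K(i)\in\mathcal{F}(\Theta(\ge i))$ plus the simple-top argument that excludes it). Finally, the passage from surjections $\overline{\Delta}(i)\twoheadrightarrow\Theta(i)$ (resp.\ from $\Delta(i)$) to equalities, which you yourself flag as the main obstacle, is left entirely to ``combinatorial bookkeeping''; in the paper it is closed by observing that $\Hom_A(K(i),S(j))=0$ for $j<i$ (the simple quotients of $K(i)$ all have index $\ge i$), so the kernel of $\Delta(i)\to\Theta(i)$ has top in $\add S(i)$ and the maximality in the definitions of $\Delta(i)$ and $\overline{\Delta}(i)$ applies. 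As it stands, the proposal is missing the central pushout/simple-top lemma and proves neither case.
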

\begin{proof}
By (MSS3), we have an exact sequence $0 \to K(i) \rightarrow P(i) \xrightarrow{\alpha_{i}} \Theta(i) \to 0$ with $K(i) \in \mathcal{F}(\Theta(\ge i))$ and $P(i)=e_{i}A$.
Applying $\Hom_{A}(-,X)$ to the exact sequence induces an exact sequence
\begin{align}
0 \to \Hom_{A}(\Theta(i), X) \xrightarrow{\Hom(\alpha_{i},X)} \Hom_{A}(P(i),X) \rightarrow\Hom_{A}(K(i), X) \xrightarrow{\delta_{X}}\Ext_{A}^{1}(\Theta(i), X) \to 0.   \notag
\end{align}
We show that
\begin{itemize}
\item[(i)] $\Theta(i) \in \mathcal{F}(S(1) \oplus \cdots \oplus S(i))$,
\item[(ii)] there exists no epimorphism $f: K(i) \to \Theta(i)^{\oplus l}$ such that $l \neq 0$ and $\delta_{\Theta(i)^{\oplus l}}(f)=0$.
\end{itemize}
First we claim (i).
Assume $j>i$.
Applying $\Hom_{A}(-, \Theta(i))$ to the exact sequence $0 \to K(j) \to P(j) \xrightarrow{\alpha_{j}} \Theta(j) \to 0$ induces an exact sequence
\begin{align}
\Hom_{A}(\Theta(j), \Theta(i)) \to \Hom_{A}(P(j), \Theta(i)) \to \Hom_{A}(K(j), \Theta(i)).\notag
\end{align}
Since the left-hand side and right-hand side vanish by (MSS2), we have $\Hom_{A}(P(j), \Theta(i))=0$.
Hence $\Theta(i) \in \mathcal{F}(S(1) \oplus \cdots \oplus S(i))$.
Next we claim (ii).
Suppose to the contrary that there exists an epimorphism $f: K(i) \to \Theta(i)^{\oplus l}$ such that $l \neq 0$ and $\delta_{\Theta(i)^{\oplus l}}(f)=0$.
Taking a push-out, we have a commutative diagram
\begin{align}
\xymatrix{
0\ar[r]& K(i) \ar[r]\ar[d]_-{f} &P(i) \ar[r]^{\alpha_{i}}\ar[d]& \Theta(i) \ar@{=}[d]\ar[r]&0&\\
0\ar[r]& \Theta(i)^{\oplus l} \ar[r]\ar[d] &E \ar[r]\ar[d]&\Theta(i) \ar[r]&0.&\\
&0 &0&&
}\notag
\end{align}
Since $\delta_{\Theta(i)^{\oplus l}}(f)=0$, we obtain $E \cong \Theta(i)^{\oplus l+1}$.
Thus $e_{i}A=P(i) \to \Theta(i)^{\oplus l+1}$ is an epimorphism, a contradiction.

(1) Assume that $\Theta(i)$ is a stone.
By $K(i) \in \mathcal{F}(\Theta(\ge i))$, there exists an exact sequence $0 \to K' \to K(i) \xrightarrow{f} \Theta(i)^{\oplus d}\to 0$ with $K' \in \mathcal{F}(\Theta(>i))$ and $d \ge0$.
Since $\Theta(i)$ is a stone, we have $\delta_{\Theta(i)^{\oplus d}}(f)=0$.
By (ii), we obtain $d=0$, and hence $K(i) \cong K' \in \mathcal{F}(\Theta(>i))$.
This implies $\Hom_{A}(K(i), S(j))=0$ for all $j \le i$.
Therefore $\Theta(i)$ is the $i$-th standard module by (i).

(2) Assume that $\Theta(i)$ is a brick.
First we show that $\Hom(\alpha_{i},\Theta(i)): \Hom_{A}(\Theta(i), \Theta(i)) \to \Hom_{A}(P(i), \Theta(i))$ is an isomorphism.
By $K(i) \in \mathcal{F}(\Theta(\ge i))$, we have an exact sequence $0 \to K' \to K(i) \xrightarrow{p} K''\to 0$ with $K' \in \mathcal{F}(\Theta(>i))$ and $K''\in\mathcal{F}(\Theta(i))$.
Then $K''\neq0$ if and only if $\Hom_{A}(K(i),\Theta(i))\neq 0$.
Thus there is nothing to prove if $K''=0$.
In the following, we assume $K''\neq0$.
Let $f\neq 0$ be an arbitrary morphism in $\Hom_{A}(K(i),\Theta(i))$.
By $\Hom_{A}(K', \Theta(i))=0$, there exists a non-zero morphism $\varphi: K'' \to \Theta(i)$ such that $\varphi p=f$.
Since $K''\in\mathcal{F}(\Theta(i))$ and $\Theta(i)$ is a brick, $\varphi$ is an epimorphism, and hence so is $f$.
By (ii), $\delta_{\Theta(i)}(f)\neq 0$.
Thus $\delta_{\Theta(i)}$ is a monomorphism.
This implies that $\Hom(\alpha_{i},\Theta(i))$ is an isomorphism.

Next we show $\Hom_{A}(P(i),\rad\Theta(i))=0$.
Let $g: P(i)\to\rad \Theta(i)$ be an arbitrary morphism and $\iota:\rad \Theta(i) \to\Theta(i)$ the inclusion.
Then $\iota g\in\Hom_{A}(P(i),\Theta(i))$.
Since $\Hom(\alpha_{i},\Theta(i))$ is an isomorphism, there uniquely exists $h\in\End_{A}(\Theta(i))$ such that $\iota g=h\alpha_{i}$.
Since $\End_{A}(\Theta(i))$ is a division ring, the morphism  $h$ is either zero or an isomorphism.
In the latter case, the morphism $\iota$ is an epimorphism, a contradiction.
Thus $h$ is zero, and hence so is $g$.
This implies $\Hom_{A}(P(i),\rad\Theta(i))=0$.
By (i), we have $\rad \Theta(i) \in \mathcal{F}(S(1) \oplus \cdots \oplus S(i-1))$.
By definition of standard modules, we obtain an exact sequence $0\to K'(i)\to e_{i}A \to \Delta(i)\to0$ with $\top K'(i)\in \add \oplus_{j >i}S(j)$.
It follows from (i) that  $\Hom_{A}(K'(i),\Theta(i))=0$.
Thus we have a commutative diagram
\begin{align}
\xymatrix{
0\ar[r]& K'(i) \ar[r]\ar[d]_-{\gamma'} &e_{i}A \ar[r]\ar@{=}[d]& \Delta(i) \ar[d]_-{\gamma}\ar[r]&0&\\
0\ar[r]& K(i) \ar[r]&e_{i}A \ar[r]&\Theta(i) \ar[r]&0.&
}\notag
\end{align}
Hence $\gamma$ is an epimorphism and $\ker \gamma \cong \cok \gamma'$.
Since $\Hom_{A}(K(i),S(j))=0$ holds for each $j< i$, we have $\Hom_{A}(\ker \gamma, S(j))\cong \Hom_{A}(\cok \gamma', S(j))=0$.
Thus $\Theta(i)$ is the maximal factor module of $\Delta(i)$ such that $\rad \Theta(i) \in \mathcal{F}(S(1) \oplus \cdots \oplus S(i-1))$.
Therefore $\Theta(i)$ is the $i$-th proper standard module.
\end{proof}

Assume that $A$ is a mixed stratified algebra with respect to $\Theta$. Let 
\begin{align}
\Xi(i):=\left\{
\begin{array}{ll}
\overline{\nabla}(i) & \textnormal{if } \Theta(i)=\Delta(i)\\ 
\nabla(i) & \textnormal{if } \Theta(i)=\overline{\Delta}(i). \notag
\end{array}
\right.
\end{align}
Then $\Xi:=(\Xi(n), \ldots, \Xi(1))$ is a mixed standardizable set in $\mod A$ by a dual statement of Proposition \ref{prop_mixstd}.  
It follows from \cite[Theorem 3.11]{BS} that $\mathcal{F}(\Theta)^{\perp_{\Ext_{A}^{1}}}=\mathcal{F}(\Xi)$.
Brundan and Stroppel gave the following result. 

\begin{proposition}\cite[Theorem 4.2]{BS} \label{prop-BS}
For each $i \in[1,n]$, there is an indecomposable module $T(i)$ satisfying the following properties. 
\begin{itemize}
\item[(1)] There is an exact sequence $0 \to \Theta(i) \to  T(i) \to C(i) \to 0$ such that $C(i) \in \mathcal{F}(\Theta(\le i))$. 
\item[(2)] There is an exact sequence $0 \to K(i) \to  T(i) \to \Xi(i) \to 0$ such that $K(i) \in \mathcal{F}(\Xi(\le i))$.
\item[(3)] $T(i)e_{i}$ is an indecomposable $e_{i}Ae_{i}$-module. 
\end{itemize}
In particular, $(\Theta, A, \mathbb{T})$ is a mixed bistratifying system, where $\mathbb{T}:=(T(1), \ldots, T(n))$.  
\end{proposition}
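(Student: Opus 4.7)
The plan is to construct each $T(i)$ as a minimal left $\mathcal{F}(\Xi)$-approximation of $\Theta(i)$ via iterated universal coextensions, and then verify the three properties in turn.

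\emph{Construction.} Since $A$ is mixed stratified with respect to $\Theta$, Proposition~\ref{prop_mixstd} yields that $\Theta$ is a mixed standardizable set in $\mod A$ and that $\mod A$ admits a finite sequence of universal coextensions by each $\Theta(j)$. Fix $i$, set $T_{0}:=\Theta(i)$, and processing $j=i, i-1, \ldots, 1$ in this order, apply the dual of Lemma~\ref{lem_brick-uniex} to replace $T_{k-1}$ by a universal coextension
\[
0\to T_{k-1}\to T_{k}\to \Theta(j)^{d_{k}}\to 0
\]
using left-minimal inflations (dual of Remark~\ref{rem_minmor}), iterating until $\Ext_{A}^{1}(\Theta(j), T_{k})=0$. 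A standard Ext long-exact-sequence argument shows the resulting module is a left $\mathcal{F}(\Xi)$-approximation of $\Theta(i)$; stripping off superfluous summands in the Krull--Schmidt category $\mod A$ gives its minimal version, which I denote $T(i)$. Composing the inclusions yields property~(1): an exact sequence $0\to \Theta(i)\to T(i)\to C(i)\to 0$ with $C(i)\in \mathcal{F}(\Theta(\le i))$.

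\emph{Verification that $T(i)\in \mathcal{F}(\Xi)$.} By construction, $\Ext_{A}^{1}(\Theta(j), T(i))=0$ for every $j\le i$: the vanishing attained at stage $j$ persists in later stages because every subsequent layer $\Theta(l)^{d_{l}}$ has $l<j$ and (MS3) gives $\Ext_{A}^{1}(\Theta(j),\Theta(l))=0$. For $j>i$, the same vanishing is immediate from $T(i)\in \mathcal{F}(\Theta(\le i))$ and (MS3). Hence $T(i)\in \mathcal{F}(\Theta)^{\perp_{\Ext_{A}^{1}}}=\mathcal{F}(\Xi)$, using the identity \cite[Theorem 3.11]{BS} recalled just before the statement.

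\emph{Indecomposability and property~(3).} This is the main obstacle. Since $\Theta(k)e_{i}=0$ for every $k<i$ (as $\Theta(k)$ has composition factors in $S(1),\ldots,S(k)$), $T(i)e_{i}$ depends only on the first batch of coextensions by $\Theta(i)$: it arises as a minimal iterated extension of $\Theta(i)e_{i}$ by itself inside $\mod e_{i}Ae_{i}$. Since $\Theta(i)e_{i}$ has a simple top as an $e_{i}Ae_{i}$-module, the dual of Lemma~\ref{lem_indec} applied in $\mod e_{i}Ae_{i}$ shows that $T(i)e_{i}$ is indecomposable, which is property~(3). Indecomposability of $T(i)$ follows: suppose $T(i)=T'\oplus T''$; applying $-e_{i}$ splits $T(i)e_{i}=T'e_{i}\oplus T''e_{i}$, so (say) $T''e_{i}=0$, i.e.\ $T''$ has no $S(i)$-composition factor. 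Any nonzero morphism $\Theta(i)\to T''$ would have image with simple top $S(i)$ (inherited from the top of $\Theta(i)$), which is impossible; hence $\Hom_{A}(\Theta(i), T'')=0$. By minimality of $T(i)$ as a left $\mathcal{F}(\Xi)$-approximation of $\Theta(i)$, the summand $T''$ must vanish.

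\emph{Property~(2) and the conclusion.} From $T(i)\in \mathcal{F}(\Theta(\le i))$ we have $T(i)e_{j}=0$ for $j>i$, while $\Xi(k)e_{k}\neq 0$; hence no $\Xi$-filtration factor of $T(i)$ can be $\Xi(k)$ with $k>i$, so $T(i)\in \mathcal{F}(\Xi(1))\ast \cdots \ast \mathcal{F}(\Xi(i))$ by Proposition~\ref{prop_dirsummand} applied in the $\Xi$-ordering. The topmost layer in this $\ast$-decomposition is non-zero (since $T(i)$ inherits the $S(i)$-composition factor from $\Theta(i)$), lies in $\mathcal{F}(\Xi(i))$, and admits a surjection onto $\Xi(i)$; composing with the canonical quotient of $T(i)$ onto this top layer yields property~(2): $0\to K(i)\to T(i)\to \Xi(i)\to 0$ with $K(i)\in \mathcal{F}(\Xi(\le i))$. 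Setting $\mathbb{T}:=(T(1),\ldots, T(n))$, property~(1) together with $T(i)\in \mathcal{F}(\Xi)$ verifies (MCS1)--(MCS4) of Definition~\ref{def_mss}(2) for $(\Theta,\mathbb{T})$; combined with $(\Theta, A)$ being a mixed stratifying system (established in the (1)$\Rightarrow$(2) direction of Theorem~\ref{thm_mixed-bistr}, which is proved independently of Proposition~\ref{prop-BS}), we conclude that $(\Theta, A, \mathbb{T})$ is the required mixed bistratifying system.
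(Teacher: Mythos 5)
Your construction never gets off the ground in the mixed case, and the failure point is exactly the difficulty this proposition is designed to circumvent. You build $T(i)$ by iterating universal coextensions by $\Theta(j)$ ``until $\Ext_{A}^{1}(\Theta(j),T_{k})=0$,'' citing the dual of Lemma \ref{lem_brick-uniex}. But Lemma \ref{lem_brick-uniex}(2) only applies \emph{if} some $a_{s}$ in the sequence becomes an isomorphism, and nothing guarantees this when $\Theta(j)=\overline{\Delta}(j)$ is a brick that is not a stone: $\Ext_{A}^{1}(\overline{\Delta}(j),\overline{\Delta}(j))$ may be non-zero, the paper's Kronecker example shows the sequence of universal (co)extensions can fail to terminate in general, and the sufficient criteria of Proposition \ref{prop_brick-univex} do not help here --- the dual of condition (ii) would require a vanishing of the form $\Ext_{A}^{1}(N/\soc N,N)=0$, which is the defining feature of proper \emph{costandard} modules, not of $\overline{\Delta}(j)$, and no bounded-length hypothesis (i) is available. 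This is precisely why Corollary \ref{cor_mainthm1} obtains the costratifying half only when $\Theta(i)=\Delta(i)$ for all $i$, and why the paper proves Proposition \ref{prop-BS} by an entirely different route: induction on the number of idempotents \`a la Brundan--Stroppel, passing to $\varepsilon_{2}A\varepsilon_{2}$, lifting $T'(i)$ via $-\otimes_{\varepsilon_{2}A\varepsilon_{2}}\varepsilon_{2}A$, and performing a \emph{single} universal coextension by $\Theta(1)$ only in the case $\Theta(1)=\Delta(1)$ (a stone, so one step suffices), while the case $\Theta(1)=\overline{\Delta}(1)$ is handled by dualizing to $A^{\op}$ via Remark \ref{rem_mixstr}, where $\mathsf{D}\Xi(1)$ is again a standard module. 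Since your whole argument (the approximation property, property (2), and the minimality you invoke for indecomposability) rests on the terminated coextension sequence, the gap is fatal rather than cosmetic.

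A secondary weakness: even granting the construction, your proof of (3) is not sound as written. You apply ``the dual of Lemma \ref{lem_indec} in $\mod e_{i}Ae_{i}$,'' but that lemma needs a left minimal morphism $\Theta(i)e_{i}\to T(i)e_{i}$ inducing an isomorphism under $\Hom_{e_{i}Ae_{i}}(\Theta(i)e_{i},-)$, and neither the minimality of the restricted map nor that isomorphism is established; minimality of $T(i)$ as a left $\mathcal{F}(\Xi)$-approximation of $\Theta(i)$ does not restrict to a minimality statement over $e_{i}Ae_{i}$. In the paper's proof, the indecomposability of $T(i)e_{i}$ is not derived from any minimality at all: it is carried as an inductive invariant (condition (3$'$) for $\varepsilon_{2}A\varepsilon_{2}$), preserved because the added $\Theta(1)$-layers satisfy $\Theta(1)e_{i}=0$, and it is then \emph{used} to select the correct indecomposable summand $T(i)$ of the coextension $T_{1}$. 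So the logical order in the paper is the reverse of yours: (3) is what produces an indecomposable $T(i)$ with properties (1) and (2), rather than a consequence of an already-constructed minimal approximation.
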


For the convenience of the readers, we give a proof. 
\begin{proof}
We show the assertion by induction on $n$. 
Assume $n=1$. Then this is clear. 

Assume $n \ge 2$. 
Let $\varepsilon_{2}:=e_{2}+\cdots+ e_{n}$. 
By \cite[Theorem 3.18]{BS}, $\varepsilon_{2}A\varepsilon_{2}$ is a mixed stratified algebra with respect to $\Theta\varepsilon_{2}:=(\Theta(2)\varepsilon_{2}, \ldots, \Theta(n)\varepsilon_{2})$ and $\Xi\varepsilon_{2}:=(\Xi(2)\varepsilon_{2}, \ldots, \Xi(n)\varepsilon_{2})$.
By the induction hypothesis, for each $i \in [2,n]$, there is an indecomposable $\varepsilon_{2}A\varepsilon_{2}$-module $T'(i)$ satisfying (1$'$) there is an exact sequence $0 \to \Theta(i)\varepsilon_{2} \to  T'(i) \to C'(i) \to 0$ such that $C'(i) \in \mathcal{F}(\Theta(\le i)\varepsilon_{2})$, (2$'$) there is an exact sequence $0 \to K'(i) \to  T'(i) \to \Xi (i)\varepsilon_{2} \to 0$ such that $K'(i) \in \mathcal{F}(\Xi (\le i)\varepsilon_{2})$ and (3$'$) $T'(i)e_{i}$ is an indecomposable $e_{i}Ae_{i}$-module. 

(i) We assume that $\Theta(1)=\Delta(1)$. 
It follows from \cite[Theorem 3.18(6)]{BS} that $\Theta(i)\varepsilon_{2} \otimes_{\varepsilon_{2}A\varepsilon_{2}}\varepsilon_{2}A\cong \Theta(i)$ and the functor $-\otimes_{\varepsilon_{2}A\varepsilon_{2}}\varepsilon_{2}A$ sends exact sequences in $\mathcal{F}(\Theta\varepsilon_{2})$ to exact sequences in $\mathcal{F}(\Theta)$. 
Applying $-\otimes_{\varepsilon_{2}A\varepsilon_{2}}\varepsilon_{2}A$ to the exact sequence $0 \to \Theta(i)\varepsilon_{2} \to T'(i) \to C'(i) \to 0$ induces an exact sequence 
\begin{align}
0 \to \Theta(i) \to T'(i)\otimes_{\varepsilon_{2}A\varepsilon_{2}}\varepsilon_{2}A \to C'(i) \otimes_{\varepsilon_{2}A\varepsilon_{2}}\varepsilon_{2}A \to 0 \notag
\end{align}
with $C'(i) \otimes_{\varepsilon_{2}A\varepsilon_{2}}\varepsilon_{2}A \in \mathcal{F}(\Theta(\le i))$. 
Let $T_{0}:=T'(i) \otimes_{\varepsilon_{2}A\varepsilon_{2}}\varepsilon_{2}A$ and $d:=\dim \Ext_{A}^{1}(\Theta(1), T_{0})$. 
Taking the ($\Bbbk$-)universal coextension of $T_{0}$ by $\Theta(1)$, we have an exact sequence 
\begin{align}\label{seq-univ-sect4}
0 \to T_{0} \to T_{1}\to \Theta(1)^{\oplus d} \to 0
\end{align}
in $\mathcal{F}(\Theta(\le i))$. 
Note that if $d=0$, then $T_{0}\cong T_{1}$ holds.
Applying $\Hom_{A}(e_{i}A,-)$ to \eqref{seq-univ-sect4}, we have isomorphisms $T_{1}e_{i} \cong T_{0}e_{i} \cong T'(i)e_{i}$.
By the condition (3$'$), $T_{1}e_{i}$ is indecomposable.
Thus there exists an indecomposable direct summand $T_{1}'$ of $T_{1}$ such that $T_{1}e_{i}\cong T_{1}'e_{i}$.
We show that an indecomposable module $T(i):=T'_{1}$ satisfies the conditions (1), (2) and (3) for each $i\ge 2$.
It follows from the construction that $T(i)$ satisfies the condition (3).
By Proposition \ref{prop_dirsummand}(2) and $T_{1}\in \mathcal{F}(\Theta(\leq i))$, we obtain $T(i)\in \mathcal{F}(\Theta(\leq i))$. 
Since $\Hom_{A}(e_{i}A, T(i))\cong T(i)e_{i}\neq 0$ holds, $T(i)$ contains $S(i)$ as a composition factor.
Hence we have $T(i)\notin \mathcal{F}(\Theta(<i))$.
By Proposition \ref{prop_dirsummand}(1), there is an exact sequence 
\begin{align}
0 \to \Theta(i) \to T(i) \to C(i)\to 0\notag
\end{align}
such that $C(i)\in\mathcal{F}(\Theta(\le i))$. 
Therefore $T(i)$ satisfies the condition (1).
In the following, we show that $T(i)$ satisfies the condition (2).
Applying $\Hom_{A}(\Theta(l),-)$ with $l \ge 2$ to \eqref{seq-univ-sect4} induces an exact sequence 
\begin{align}
\Ext^{1}_{A}(\Theta(l),T_{0})\to \Ext^{1}_{A}(\Theta(l),T_{1}) \to \Ext_{A}^{1}(\Theta(l),\Theta(1)^{\oplus d})=0,  \notag
\end{align}
where the last equality follows from Proposition \ref{prop_mixstd}.
By \cite[Theorem 3.18(5)]{BS}, we obtain 
\begin{align}
\Ext_{A}^{1}(\Theta(l), T_{0}) \cong \Ext_{\varepsilon_{2}A\varepsilon_{2}}^{1}(\Theta(l)\varepsilon_{2}, T_{0}\varepsilon_{2})\cong \Ext_{\varepsilon_{2}A\varepsilon_{2}}^{1}(\Theta(l) \varepsilon_{2}, T'(i))=0, \notag
\end{align}  
where the last equality follows from $T'(i)\in \mathcal{F}(\Xi\varepsilon_{2})=\mathcal{F}(\Theta\varepsilon_{2})^{\perp_{\Ext_{A}^{1}}}$.
Thus $ \Ext^{1}_{A}(\Theta(l),T_{1})=0$. 
Applying $\Hom_{A}(\Theta(1),-)$ to \eqref{seq-univ-sect4} induces an exact sequence 
\begin{align}
\Hom_{A}(\Theta(1), \Theta(1)^{\oplus d}) \xrightarrow{\delta_{\Theta(1)}} \Ext^{1}_{A}(\Theta(1), T_{0}) \to \Ext_{A}^{1}(\Theta(1), T_{1})\to \Ext_{A}^{1}(\Theta(1),\Theta(1)^{\oplus d})=0, \notag
\end{align}
where the last equality follows from $\Theta(1)=\Delta(1)$.
Since $\delta_{\Theta(1)}$ is an epimorphism, we obtain $\Ext_{A}^{1}(\Theta(1), T_{1})=0$. 
Thus $T(i)\in\mathcal{F}(\Theta)^{\perp_{\Ext_{A}^{1}}}=\mathcal{F}(\Xi)$. 
By $T(i)\in\mathcal{F}(\Theta(\le i))$, we obtain $T(i)\in\mathcal{F}(S(1)\oplus \cdots \oplus S(i))$, and hence $T(i)\in\mathcal{F}(\Xi(\le i))$.
Since $T(i)$ contains $S(i)$ as a composition factor, we have $T(i)\notin\mathcal{F}(\Xi(<i))$.
Thus it follows from Proposition \ref{prop_dirsummand}(1) that there is an exact sequence 
\begin{align}
0 \to K(i) \to T(i) \to \Xi(i)\to 0 \notag
\end{align}
such that $K(i)\in\mathcal{F}(\Xi(\le i))$. 
Hence $T(i)$ is the desired module for each $i\ge 2$.
Moreover, we put $T(1):=\Delta(1)$.
By $\Theta(1)=\Delta(1)$ and $\Xi(1)=\overline{\nabla}(1)=S(1)$, $T(1)$ clearly satisfies the conditions (1) and (2).
By \cite[Theorem 3.18(1)]{BS}, $T(1)e_{1}=\Delta(1)e_{1}$ is a standard module of $e_{1}Ae_{1}$. 
Therefore $T(1)$ satisfies the condition (3). 

(ii) We assume that $\Theta(1)=\overline{\Delta}(1)$. 
Let $\mathsf{D}:=\Hom_{\Bbbk}(-,\Bbbk)$.
Then $\mathsf{D}\Xi(1)$ is a standard left $A$-module. 
It follows from Remark \ref{rem_mixstr} that $A^{\op}$ is a mixed stratified algebra with respect to $\mathsf{D}\Xi=(\mathsf{D}\Xi(1), \ldots, \mathsf{D}\Xi(n))$. 
By (i), we have an indecomposable left $A$-module $T^{\circ}(i)$ satisfying (1)$^{\circ}$ there is an exact sequence $0\to\mathsf{D}\Xi(i) \to T^{\circ}(i)\to C^{\circ}(i)\to 0$ such that $C^{\circ}(i)\in\mathcal{F}(\mathsf{D}\Xi(\le i))$, (2)$^{\circ}$ there is an exact sequence $0\to K^{\circ}(i) \to T^{\circ}(i)\to\mathsf{D}\Theta(i)\to 0$ such that $K^{\circ}(i)\in\mathcal{F}(\mathsf{D}\Theta(\le i))$ and (3)$^{\circ}$ $e_{i}T^{\circ}(i)$ is an indecomposable left $e_{i}Ae_{i}$-module. 
Thus $\mathsf{D}T^{\circ}(i)$ is a desired module. 
\end{proof}

Now we are ready to prove Theorem \ref{thm_mixed-bistr}.

\begin{proof}[Proof of Theorem \ref{thm_mixed-bistr}]
(3)$\Rightarrow$(2): This is clear. 

(2)$\Rightarrow$(1): By Lemma \ref{lem_pstandard}, we have $\Theta(i) \in \{\Delta(i), \overline{\Delta}(i) \}$.
Hence the assertion follows from $A \in \mathcal{F}(\Theta)$.

(1)$\Rightarrow$(3): This follows from Proposition \ref{prop-BS}. 
\end{proof}

\subsection{Standardization}
Let $\mathcal{C}:=(\mathcal{C},\mathbb{E},\mathfrak{s},\mathbb{E}^{-1})$ be a $\Bbbk$-linear Hom-finite Krull--Schmidt extriangulated category with a negative first extension.
In this subsection, we construct a mixed stratified algebra from a mixed stratifying system in $\mathcal{C}$.
Applying a covariant functor $\Psi$ (respectively, a contravariant functor $\Phi$) to  $\Theta:=(\Theta(1),\Theta(2),\ldots,\Theta(n))$, we obtain  $\Psi(\Theta):=(\Psi(\Theta(1)),\Psi(\Theta(2)),\ldots,\Psi(\Theta(n)))$ (respectively, $\Phi(\Theta):=(\Phi(\Theta(n)),\ldots, \Phi(\Theta(2)), \Phi(\Theta(1)))$).
The following theorem is a main result of this paper.

\begin{theorem}\label{mainthm2}
Let $(\Theta, \mathbb{P})$ be a mixed stratifying system in $\mathcal{C}$ and $B:=\End_{\mathcal{C}}(\mathbb{P})$.
Let $\Psi:=\mathcal{C}(\mathbb{P}, -): \mathcal{\mathcal{C}} \to \mod B$.
Then $(\Psi(\Theta),\Psi(\mathbb{P}))$ is a mixed stratifying system in $\mod B$ and the restriction $\Psi: \mathcal{F}(\Theta) \to \mathcal{F}(\Psi(\Theta))$ is an equivalence of categories.
Moreover, $B$ is a mixed stratified algebra with respect to $\Psi(\Theta)$.
\end{theorem}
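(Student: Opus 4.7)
The plan is to reduce the theorem to showing that the restriction $\Psi|_{\mathcal{F}(\Theta)} : \mathcal{F}(\Theta) \to \mathcal{F}(\Psi(\Theta))$ is an equivalence of categories that transports the extriangulated structure to the exact structure of $\mod B$; once this is done, the mixed stratifying system axioms for $(\Psi(\Theta), \Psi(\mathbb{P}))$ will follow immediately by transport, and the final statement will follow from Theorem \ref{thm_mixed-bistr} together with the observation $\Psi(\mathbb{P}) = \mathcal{C}(\mathbb{P},\mathbb{P}) = B$ as a right $B$-module.

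First I would use the fact, supplied by Proposition \ref{prop_mss}(2), that $\mathbb{P}$ is a projective generator of $\mathcal{F}(\Theta)$. Because $\mathbb{E}(\mathbb{P}, \mathcal{F}(\Theta)) = 0$, Proposition \ref{prop_longex} shows that $\Psi$ sends each $\mathfrak{s}$-conflation in $\mathcal{F}(\Theta)$ to a short exact sequence in $\mod B$. Iterating the generator property produces, for every $M \in \mathcal{F}(\Theta)$, a sequence of $\mathfrak{s}$-deflations $\cdots \to P_1 \to P_0 \to M$ with $P_i \in \add \mathbb{P}$ and each kernel again in $\mathcal{F}(\Theta)$; applying $\Psi$ yields a genuine projective resolution of $\Psi(M)$ in $\mod B$. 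The Yoneda identification $\mathcal{C}(P, N) \cong \Hom_B(\Psi(P), \Psi(N))$ for $P \in \add\mathbb{P}$ then propagates, via the comparison of the two projective resolutions, to a natural isomorphism $\mathcal{C}(M, N) \cong \Hom_B(\Psi(M), \Psi(N))$ for all $M, N \in \mathcal{F}(\Theta)$, so that $\Psi|_{\mathcal{F}(\Theta)}$ is fully faithful.

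Next I would establish the companion isomorphism $\mathbb{E}(M, N) \cong \Ext^1_B(\Psi(M), \Psi(N))$ for $M, N \in \mathcal{F}(\Theta)$ by applying $\mathcal{C}(-, N)$ (respectively $\Hom_B(-, \Psi(N))$) to a syzygy $\mathfrak{s}$-conflation $K \to P \to M \dashrightarrow$ with $P \in \add\mathbb{P}$, using $\mathbb{E}(P, N) = 0 = \Ext^1_B(\Psi(P), \Psi(N))$ to identify both sides with the same cokernel. This isomorphism lets me lift: given $N \in \mathcal{F}(\Psi(\Theta))$ with a filtration in $\add\Psi(\Theta(i))$'s, induction on filtration length — each extension class in $\Ext^1_B$ coming from a unique $\mathfrak{s}$-conflation — produces an object $M \in \mathcal{F}(\Theta)$ with $\Psi(M) \cong N$. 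Hence $\Psi|_{\mathcal{F}(\Theta)}$ is an equivalence onto $\mathcal{F}(\Psi(\Theta))$. Having this, the verification of (MSS1)--(MSS4) for $(\Psi(\Theta), \Psi(\mathbb{P}))$ is formal: (MSS4) is automatic since $\mathbb{E}^{-1} = 0$ on $\mod B$; (MSS1) follows because full faithfulness and the $\mathbb{E}$-$\Ext^1$ identification transfer the brick/stone property and indecomposability; (MSS2) is immediate from full faithfulness and (MSS2) for $\Theta$; for (MSS3) I apply the exact functor $\Psi$ to $K(i) \to P(i) \to \Theta(i) \dashrightarrow$, noting that $\Psi(K(i)) \in \mathcal{F}(\Psi(\Theta)(\ge i))$ by the equivalence, and $\Psi(P(i)) = e_i B$ is indecomposable projective in $\mod B$ because $\End_{\mathcal{C}}(P(i))$ is local.

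Finally, since $\Psi(\mathbb{P}) = B$ as a right $B$-module, the pair $(\Psi(\Theta), B)$ is a mixed stratifying system in $\mod B$, so Theorem \ref{thm_mixed-bistr} gives that $B$ is a mixed stratified algebra with respect to $\Psi(\Theta)$. The main technical obstacle will be the rigorous bookkeeping for the isomorphism $\mathbb{E}(M, N) \cong \Ext^1_B(\Psi(M), \Psi(N))$ in the extriangulated setting and the lifting argument it supports, since this is where the extriangulated and abelian structures must be reconciled; everything else is either immediate from the equivalence or a direct transport of the axioms.
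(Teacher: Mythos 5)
Your overall architecture is the same as the paper's: the paper also reduces everything to showing that $\Psi$ restricts to an equivalence $\mathcal{F}(\Theta)\to\mathcal{F}(\Psi(\Theta))$ together with the comparison $\mathbb{E}(M,N)\cong\Ext^{1}_{B}(\Psi(M),\Psi(N))$ (this is exactly Lemma \ref{lem_equiv}), proves full faithfulness from projective presentations by $\add\mathbb{P}$ via Proposition \ref{prop_mss}(2), transports (MSS1)--(MSS4), and concludes with $\Psi(\mathbb{P})\cong B$ and Theorem \ref{thm_mixed-bistr}. Where you genuinely diverge is the density step: the paper does \emph{not} lift extension classes through the $\mathbb{E}$--$\Ext^{1}$ comparison (indeed it proves that comparison only \emph{after} the equivalence). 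Instead, given $0\to X_{l-1}\to X_{l}\to\Lambda_{l}\to0$, it uses projectivity of $\Psi(P_{l})$ in $\mod B$ to replace $X_{l}$ by a cokernel of a map $\Psi(K_{l})\to X_{l-1}\oplus\Psi(P_{l})$, lifts that map through full faithfulness to $\alpha\colon K_{l}\to M_{l-1}\oplus P_{l}$, and invokes \cite[Corollary 3.16]{NP19} to see that $\alpha$ is an $\mathfrak{s}$-inflation, so that $M_{l}:=\cone(\alpha)$ exists in $\mathcal{C}$ and lies in $\mathcal{F}(\Theta)$. Your route is workable but buys nothing for free: to conclude $\Psi(M_{l})\cong X_{l}$ from ``the extension class lifts'' you must know that your isomorphism $\mathbb{E}(\Theta_{l},M_{l-1})\cong\Ext^{1}_{B}(\Lambda_{l},\Psi(M_{l-1}))$ is the map induced by applying $\Psi$ to $\mathfrak{s}$-conflations (so that the realizing conflation of the lifted class has image Yoneda-equivalent to the given extension, hence isomorphic middle term), not merely an abstract identification of cokernels. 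You flag this as bookkeeping, and it can indeed be checked by realizing $\delta=f_{\ast}\eta$ with $\eta$ the syzygy conflation and applying $\Psi$ to the resulting morphism of conflations, but as written it is the missing step; the paper's lifting-of-inflations argument is designed precisely to avoid it.

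A second point needs repair: you assert that $\mathbb{E}(\mathbb{P},\mathcal{F}(\Theta))=0$ together with Proposition \ref{prop_longex} makes $\Psi$ send conflations in $\mathcal{F}(\Theta)$ to \emph{short} exact sequences, and likewise you get left exactness of $\mathcal{C}(-,N)$ on the syzygy presentations ``by comparison of resolutions''. In a general extriangulated category (e.g.\ triangulated $\mathcal{C}$) Proposition \ref{prop_longex} only yields right exactness of $\Psi$ and exactness of $\mathcal{C}(-,N)$ at the middle terms; the injectivity of $\Psi(L)\to\Psi(M)$ and of $\mathcal{C}(M,N)\to\mathcal{C}(P_{0},N)$ is exactly where (MSS4) and the negative first extension axiom (NE2) enter, via $\mathbb{E}^{-1}(\mathcal{F}(\Theta),\mathcal{F}(\Theta))=0$. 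This is how the paper argues, and without it the full faithfulness and the exactness of $\Psi$ on $\mathcal{F}(\Theta)$ would fail in the triangulated case; your proposal only invokes $\mathbb{E}^{-1}=0$ for the target category $\mod B$. With these two points supplied, the rest of your verification of (MSS1)--(MSS3), the identification $\Psi(\mathbb{P})\cong B$ as an ordered set of projectives, and the appeal to Theorem \ref{thm_mixed-bistr} match the paper's proof.
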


To prove Theorem \ref{mainthm2}, we need the following lemma.

\begin{lemma}\label{lem_equiv}
Let $\Lambda:=\Psi(\Theta)$. Then the following statements hold.
\begin{itemize}
\item[(1)] The restriction $\Psi: \mathcal{F}(\Theta)\to \mathcal{F}(\Lambda)$ is an equivalence of categories.
\item[(2)] For each $M, N \in \mathcal{F}(\Theta)$, $\mathbb{E}(M, N) \cong \Ext_{B}^{1}(\Psi(M), \Psi(N))$ holds.
\end{itemize}
\end{lemma}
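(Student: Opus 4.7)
The plan is to realize $\Psi=\mathcal{C}(\mathbb{P},-)$ as a "Yoneda-type" equivalence induced by the projective generator $\mathbb{P}$ of $\mathcal{F}(\Theta)$ (Proposition~\ref{prop_mss}(2)), and derive both parts by comparing the long exact sequences coming from a presentation $K\to P\to M\dashrightarrow$ with $P\in\add\mathbb{P}$ and $K\in\mathcal{F}(\Theta)$. First I would establish the preliminary vanishing $\mathbb{E}^{-1}(\mathcal{F}(\Theta),\mathcal{F}(\Theta))=0$ by induction on filtration length: (MSS4) gives the statement on $\add\Theta$, and the two long exact sequences in the axioms (NE2) propagate the vanishing through any filtration in either variable. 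Next, for any $\mathfrak{s}$-conflation $X\to Y\to Z\dashrightarrow$ in $\mathcal{F}(\Theta)$, applying $\Psi$ and combining Proposition~\ref{prop_longex} with the now-established vanishing of $\mathbb{E}^{-1}(\mathbb{P},Z)$ and with $\mathbb{E}(\mathbb{P},X)=0$ (since $\mathbb{P}\in{}^{\perp_{\mathbb{E}}}\mathcal{F}(\Theta)$) yields a short exact sequence $0\to\Psi(X)\to\Psi(Y)\to\Psi(Z)\to 0$ in $\mod B$. Induction on filtration length then gives $\Psi(\mathcal{F}(\Theta))\subseteq\mathcal{F}(\Lambda)$.

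For fully faithfulness and (2), fix $M,N\in\mathcal{F}(\Theta)$ and take the presentation $K\to P\to M\dashrightarrow$ from Proposition~\ref{prop_mss}(2). Applying $\mathcal{C}(-,N)$, and using the preliminary vanishing together with $\mathbb{E}(P,N)=0$, produces a four-term exact sequence
\[0\to\mathcal{C}(M,N)\to\mathcal{C}(P,N)\to\mathcal{C}(K,N)\to\mathbb{E}(M,N)\to 0.\]
Similarly, applying $\Hom_B(-,\Psi(N))$ to the exact sequence $\Psi(K)\to\Psi(P)\to\Psi(M)\to 0$ of step two (noting $\Psi(P)$ is projective in $\mod B$ by Yoneda) gives
\[0\to\Hom_B(\Psi(M),\Psi(N))\to\Hom_B(\Psi(P),\Psi(N))\to\Hom_B(\Psi(K),\Psi(N))\to\Ext^1_B(\Psi(M),\Psi(N))\to 0.\]
The natural vertical map $\alpha_P:\mathcal{C}(P,N)\to\Hom_B(\Psi(P),\Psi(N))$ is an isomorphism by Yoneda. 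A first diagram chase, using a further presentation of $K$ and Yoneda on its projective cover, shows that $\alpha_K$ is injective for every $K\in\mathcal{F}(\Theta)$; feeding this into the first three columns of the above diagram then forces $\alpha_M$ to be an isomorphism, i.e.\ $\Psi$ is fully faithful. Applying this conclusion with $M$ replaced by $K$ promotes $\alpha_K$ from injective to an isomorphism, and the 5-lemma on the full four-term diagram yields the induced isomorphism $\mathbb{E}(M,N)\cong\Ext^1_B(\Psi(M),\Psi(N))$, which is (2).

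Essential surjectivity in (1) is proved by induction on the filtration length of $X\in\mathcal{F}(\Lambda)$. The base case $X\in\add\Lambda$ follows from lifting idempotents: any direct summand of $\Psi(\Theta^{\oplus k})$ corresponds under the fully faithful $\Psi$ to an idempotent in $\End_\mathcal{C}(\Theta^{\oplus k})$, which splits in the Krull--Schmidt category $\mathcal{C}$. For the inductive step, take a short exact sequence $0\to X'\to X\to X''\to 0$ in $\mod B$ with $X',X''\in\mathcal{F}(\Lambda)$ of smaller filtration length, lift $X',X''$ to $M',M''\in\mathcal{F}(\Theta)$ by induction, transport the extension class $\varepsilon\in\Ext^1_B(X'',X')$ along the isomorphism of part (2) to $\delta\in\mathbb{E}(M'',M')$, and realize $\delta$ as an $\mathfrak{s}$-conflation $M'\to M\to M''\dashrightarrow$; step two then gives $\Psi(M)\cong X$.

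The main obstacle is the careful bookkeeping with the negative first extension: without condition (MSS4) and its propagation to all of $\mathcal{F}(\Theta)$, the map $\mathcal{C}(M,N)\to\mathcal{C}(P,N)$ need not be injective, which would simultaneously break the faithfulness argument and the identification of $\mathbb{E}(M,N)$ as a cokernel. The other delicate point is verifying that the comparison isomorphism in (2) is natural in the sense that it sends the class of the $\mathfrak{s}$-conflation $M'\to M\to M''\dashrightarrow$ to the class of $0\to\Psi(M')\to\Psi(M)\to\Psi(M'')\to 0$ — needed so that the lifting of extension classes in the essential-surjectivity step actually recovers $X$ up to isomorphism.
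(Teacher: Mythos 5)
Your treatment of well-definedness, full faithfulness and part (2) is essentially the paper's argument: the authors likewise propagate (MSS4) to all of $\mathcal{F}(\Theta)$, take a presentation by objects of $\add\mathbb{P}$ (they use two conflations $K_{1}\to P_{0}\to M\dashrightarrow$, $K_{2}\to P_{1}\to K_{1}\dashrightarrow$ and compare the resulting left-exact sequences, which is the same comparison you run with $P$ and $K$ plus the injectivity of $\alpha_{K}$), and obtain (2) by identifying the two cokernels. Where you genuinely diverge is essential surjectivity. You lift the two outer terms of a short exact sequence $0\to X'\to X\to X''\to 0$, transport the class $\varepsilon\in\Ext^{1}_{B}(X'',X')$ backwards through the isomorphism of (2), and realize it by an $\mathfrak{s}$-conflation; this is conceptually clean, but its entire weight rests on the compatibility you flag at the end, namely that the isomorphism $\mathbb{E}(M'',M')\cong\Ext^{1}_{B}(\Psi(M''),\Psi(M'))$ sends the class of a conflation $M'\to M\to M''\dashrightarrow$ to the class of $0\to\Psi(M')\to\Psi(M)\to\Psi(M'')\to 0$. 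That statement is true and provable (lift $P\to M''$ through the conflation using $\mathbb{E}(P,M')=0$, restrict to $K\to M'$, and chase the connecting maps on both sides), but it is real extra work in the extriangulated setting and you have not carried it out. The paper avoids this issue altogether: given $0\to X_{l-1}\to X_{l}\to\Lambda_{l}\to 0$ with $\Lambda_{l}=\Psi(\Theta_{l})$, it pulls back along $\Psi(P_{l})\to\Lambda_{l}$ coming from (MSS3) (the pullback splits since $\Psi(P_{l})$ is projective), uses full faithfulness to lift the resulting map $\Psi(K_{l})\to X_{l-1}\oplus\Psi(P_{l})$ to a morphism $\alpha\colon K_{l}\to M_{l-1}\oplus P_{l}$, observes that $\alpha$ is an $\mathfrak{s}$-inflation by \cite[Corollary 3.16]{NP19} because its second component is one, and then uses the dual of \cite[Proposition 3.17]{NP19} to produce a conflation $M_{l-1}\to M_{l}\to\Theta_{l}\dashrightarrow$ showing both $M_{l}\in\mathcal{F}(\Theta)$ and $\Psi(M_{l})\cong X_{l}$. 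So the paper's route trades your naturality verification for a direct construction via the extriangulated axioms; if you keep your route, you must supply the compatibility argument, since without it the lifted object $M$ is only known to sit in some extension of $M''$ by $M'$, not one whose $\Psi$-image has class $\varepsilon$, and the conclusion $\Psi(M)\cong X$ does not follow.
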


\begin{proof}
Let $(\Theta, \mathbb{P})$ be a mixed stratifying system in $\mathcal{C}$.

(1) First we show that the restriction $\Psi:\mathcal{F}(\Theta)\rightarrow\mathcal{F}(\Lambda)$ is well-defined.
Let $L\rightarrow M\rightarrow N\dasharrow$ be an $\mathfrak{s}$-conflation in $\mathcal{F}(\Theta)$.
Then it follows from (MSS4) and $\mathbb{P}\in {}^{\perp_{\mathbb{E}}}\mathcal{F}(\Theta)$ that $0\rightarrow \Psi(L)\rightarrow \Psi(M)\rightarrow \Psi(N)\rightarrow 0$ is an exact sequence in $\mod B$.
This property induces that  $\Psi:\mathcal{F}(\Theta)\rightarrow\mathcal{F}(\Lambda)$ is well-defined.

Next we show that $\Psi:\mathcal{F}(\Theta)\to\mathcal{F}(\Lambda)$ is fully faithful.
Let $M,N\in \mathcal{F}(\Theta)$.
By Proposition \ref{prop_mss}(2), there exist $\mathfrak{s}$-conflations
\begin{align}\label{projpres}
K_{1} \to P_{0} \rightarrow M \dashrightarrow,\ K_{2} \to P_{1} \rightarrow K_{1} \dashrightarrow
\end{align}
such that $K_{1}, K_{2} \in \mathcal{F}(\Theta)$ and $P_{0}, P_{1} \in \add \mathbb{P}$.
Applying $\mathcal{C}(-,N)$ to \eqref{projpres}, we obtain exact sequences
\begin{align}
\mathbb{E}^{-1} (K_{1}, N) \to \mathcal{C}(M,N)\to \mathcal{C}(P_{0},N)\to \mathcal{C}(K_{1},N), \notag \\
\mathbb{E}^{-1} (K_{2}, N) \to \mathcal{C}(K_{1},N)\to \mathcal{C}(P_{1},N)\to \mathcal{C}(K_{2},N)\notag
\end{align}
by (NE2).
Since $N, K_{1}, K_{2} \in \mathcal{F}(\Theta)$, it follows from (MSS4) that $\mathbb{E}^{-1}(K_{i}, N)=0$ holds for each $i =1, 2$.
Thus we obtain an exact sequence
\begin{align}\label{leftexseq}
0 \to \mathcal{C}(M,N)\to \mathcal{C}(P_{0},N)\to \mathcal{C}(P_{1},N).
\end{align}
On the other hand, applying $\Psi$ to \eqref{projpres} induces an exact sequence
\begin{align}\label{rightexseq}
\Psi(P_{1})\rightarrow \Psi(P_{0})\rightarrow \Psi(M)\rightarrow 0.
\end{align}
Applying $\Hom_{B}(-,\Psi(N))$ to \eqref{rightexseq} induces an exact sequence
\begin{align}
0\to \Hom_{B}(\Psi(M),\Psi(N))\to \Hom_{B}(\Psi(P_{0}),\Psi(N))\to \Hom_{B}(\Psi(P_{1}),\Psi(N)). \notag
\end{align}
Comparing the exact sequence with \eqref{leftexseq}, we have $\mathcal{C}(M,N)\cong \Hom_{B}(\Psi(M),\Psi(N))$ because $\mathcal{C}(-,N)\to \Hom_{B}(\Psi(-),\Psi(N))$ is a functorial isomorphism on $\add\mathbb{P}$.

Finally we show that $\Psi:\mathcal{F}(\Theta)\to \mathcal{F}(\Lambda)$ is dense.
Let $X\in \mathcal{F}(\Lambda)$.
By Lemma \ref{lem_filt}, we obtain a sequence of monomorphisms
\begin{align}
0=X_{0}\xrightarrow{f_{0}} X_{1}\xrightarrow{f_{1}} X_{2}\xrightarrow{f_{2}} \cdots \xrightarrow{f_{l-1}} X_{l}=X \notag
\end{align}
with $\Lambda_{i}:=\cok f_{i-1} \in \add \Lambda (j_{i})$, where $\Lambda(j_{i}):=\Psi(\Theta(j_{i}))$,  for each $i\in [1,l]$.
By induction on $l$, we show that there exists $M\in \mathcal{F}(\Theta)$ such that $X\cong \Psi(M)$.
If $l=1$, then this is clear by $X_{1}=\Lambda_{1} \in \add\Lambda(j_{1})$.
Assume $l\ge2$.
Then we have the exact sequence $0\to X_{l-1} \xrightarrow{f_{l-1}} X_{l} \to \Lambda_{l} \to 0$.
By $\Lambda_{l} \in \add \Lambda(j_{l})$, there exists $\Theta_{l} \in \add \Theta(j_{l})$ such that $\Psi(\Theta_{l})=\Lambda_{l}$.
Due to (MSS3), we obtain an $\mathfrak{s}$-conflation $K_{l}\xrightarrow{\beta} P_{l}\to \Theta_{l}\dashrightarrow$ with  $K_{l}\in \mathcal{F}(\Theta)$ and $P_{l}\in \add \mathbb{P}$.
Applying $\Psi$ to the $\mathfrak{s}$-conflation induces an exact sequence
\begin{align}
0 \to \Psi(K_{l}) \xrightarrow{\Psi(\beta)} \Psi(P_{l}) \to \Lambda_{l} \to 0. \notag
\end{align}
Since $\Psi(P_{l})$ is a projective $B$-module, we have a commutative diagram of exact sequences
\begin{align}
\xymatrix{
&&0\ar[d]&0\ar[d]&\\
&&\Psi(K_{l})\ar@{=}[r]\ar[d]^-{a}&\Psi(K_{l})\ar[d]^{\Psi(\beta)}&\\
0\ar[r]&X_{l-1}\ar[r]^-{\tiny\left[\begin{smallmatrix}1\\ 0 \end{smallmatrix}\right]}\ar@{=}[d]&X_{l-1}\oplus \Psi(P_{l})\ar[r]^-{\tiny\left[\begin{smallmatrix}0& 1 \end{smallmatrix}\right]}\ar[d]&\Psi(P_{l}) \ar[r]\ar[d]&0&\\
0\ar[r]&X_{l-1}\ar[r] &X_{l}\ar[r]\ar[d]&\Lambda_{l}\ar[r]\ar[d]&0.&\\
&&0&0&
}\notag
\end{align}
By the induction hypothesis, there exists $M_{l-1} \in \mathcal{F}(\Theta)$ such that $\Psi(M_{l-1}) \cong X_{l-1}$.
Since $\Psi:\mathcal{F}(\Theta)\to \mathcal{F}(\Lambda)$ is fully faithful, we have a morphism  $\alpha=\left[\begin{smallmatrix}\alpha_{1}\\\alpha_{2}\end{smallmatrix}\right]: K_{l} \to M_{l-1} \oplus P_{l}$ with $a=\Psi(\alpha)$.
Hence $\alpha_{2}=\beta$ holds.
Since $\beta$ is an $\mathfrak{s}$-inflation, so is $\alpha$ by \cite[Corollary 3.16]{NP19}.
Thus we obtain an $\mathfrak{s}$-conflation $K_{l} \xrightarrow{\alpha} M_{l-1}\oplus P_{l} \to M_{l} \dashrightarrow$ and $\Psi(M_{l})\cong X_{l}$.
We show $M_{l} \in \mathcal{F}(\Theta)$.
By the dual statement of \cite[Proposition 3.17]{NP19}, we have a commutative diagram of $\mathfrak{s}$-conflations
\begin{align}
\xymatrix{
&&K_{l}\ar@{=}[r]\ar[d]^-{\tiny\left[\begin{smallmatrix}\alpha_{1}\\ \beta \end{smallmatrix}\right]}&K_{l}\ar[d]^-{\beta}&\\
&M_{l-1}\ar[r]^-{\tiny\left[\begin{smallmatrix}1\\0 \end{smallmatrix}\right]}\ar@{=}[d]&M_{l-1}\oplus P_{l}\ar[r]^-{\tiny\left[\begin{smallmatrix}0& 1 \end{smallmatrix}\right]}\ar[d]&P_{l}\ar[d]\ar@{-->}[r]&\\
&M_{l-1}\ar[r] &M_{l}\ar[r]\ar@{-->}[d]&\Theta_{l}\ar@{-->}[d]\ar@{-->}[r]&.\\
&&&&
}\notag
\end{align}
In particular, we obtain the $\mathfrak{s}$-conflation $M_{l-1}\rightarrow M_{l}\rightarrow \Theta_{l}\dasharrow$.
Hence $M_{l} \in \mathcal{F}(\Theta)$.

(2) Let $M, N \in \mathcal{F}(\Theta)$.
By Proposition \ref{prop_mss}(2), there exists an $\mathfrak{s}$-conflation $K \to P \to M \dashrightarrow$ such that $K \in \mathcal{F}(\Theta)$ and $P \in \add \mathbb{P}$.
Since $\Psi:\mathcal{F}(\Theta)\to \mathcal{F}(\Lambda)$ is fully faithful, we have a commutative diagram
\begin{align}
\xymatrix{
\mathcal{C}(P,N) \ar[r]\ar[d]^{\cong}& \mathcal{C}(K,N) \ar[r]\ar[d]^{\cong} &\mathbb{E}(M,N) \ar[r] &0\\
\Hom_{B}(\Psi(P), \Psi(N))  \ar[r]& \Hom_{B}(\Psi(K), \Psi(N)) \ar[r]& \Ext_{B}^{1}(\Psi(M), \Psi(N)) \ar[r]& 0.}\notag
\end{align}
Thus we obtain $\mathbb{E}(M, N) \cong \Ext_{B}^{1}(\Psi(M), \Psi(N))$.
This finishes the proof.
\end{proof}

Now we are ready to prove Theorem \ref{mainthm2}.

\begin{proof}[Proof of Theorem \ref{mainthm2}]
We show that $(\Psi(\Theta),\Psi(\mathbb{P}))$ is a mixed stratifying system in $\mod B$.
Let $\Lambda(i):=\Psi(\Theta(i))$ and $\Lambda:=\Psi(\Theta)$.
Since $\mathbb{E}^{-1}(\Lambda, \Lambda)=0$ holds by the negative first extension structure on $\mod B$, we have (MSS4).
By Lemma \ref{lem_equiv}(1), $\Psi$ gives an equivalence $\mathcal{F}(\Theta)\to \mathcal{F}(\Lambda)$.
This implies that if $i>j$, then $\Hom_{B}(\Lambda(i), \Lambda(j))=0$, and hence (MSS2) holds.
Moreover, by Lemma \ref{lem_equiv}, $\Theta(i)$ is a brick (respectively, a stone) if and only if $\Lambda(i)$ is a brick (respectively, a stone).
Thus (MSS1) holds.
Due to (MSS3) for $(\Theta, \mathbb{P})$, there exists an $\mathfrak{s}$-conflation $K(i) \to P(i) \to \Theta(i) \dashrightarrow$ such that $K(i) \in \mathcal{F}(\Theta(\ge i))$ and $P(i) \in \add \mathbb{P}$.
Applying $\Psi$ to the $\mathfrak{s}$-conflation induces an exact sequence $0 \to \Psi(K(i)) \to \Psi(P(i)) \to \Lambda(i) \to 0$ such that $\Psi(K(i)) \in \mathcal{F}(\Lambda(\ge i))$ and $\Psi(P(i))$ is an indecomposable projective $B$-module.
Thus we have (MSS3).

Next we show that $B$ is a mixed stratified algebra with respect to $\Lambda$.
Let $e_{P(i)}\in \End_{\mathcal{C}}(\mathbb{P})$ be a composition map of the projection $\mathbb{P} \to P(i)$ and the injection $P(i) \to \mathbb{P}$.
Then $(e_{P(1)}, \ldots, e_{P{(n)}})$ is a complete ordered set of primitive orthogonal idempotents of $B$.
We regard $B$ as an ordered set $B=(e_{P(1)}B, \ldots, e_{P(n)}B)$.
Thus we have $B\cong\Psi(\mathbb{P})$ as ordered sets.
By Theorem \ref{thm_mixed-bistr}, the assertion holds.
This finishes the proof.
\end{proof}

By Theorem \ref{mainthm2}, we can recover \cite[Proposition 1.3]{ES03}, \cite[Theorem 6.1]{MS16} and \cite[Proposition 7.1]{S19}.
Moreover, by Theorems \ref{mainthm1} and \ref{mainthm2}, we have \cite[Theorem 2]{DR92} and \cite[Theorem 2.3]{ADL08}.

\subsection{Ringel duality}
In this subsection, we study Ringel duality of mixed stratified algebras by using mixed bistratifying systems.
Let $A$ be a mixed stratified algebra with respect to $\Theta:=(\Theta(1),\Theta(2),\ldots,\Theta(n))$.
By Theorem \ref{thm_mixed-bistr}, there exists a mixed bistratifying system $(\Theta, A, \mathbb{T})$.
Moreover, it is known that, if $\Theta=\Delta$ (respectively, $\Theta=\overline{\Delta}$), then $\mathbb{T}$ is a tilting $A$-module (respectively, cotilting $A$-module). 
For the definition of tilting modules, see \cite{M86}. 
In general, $\mathbb{T}$ is neither tilting nor cotilting.
However, $\mathbb{T}$ is a Wakamatsu tilting $A$-module.
We recall the definition of Wakamatsu tilting modules which are a generalization of tilting modules.
See \cite{W88, MR04} for more details.

\begin{definition}[{\cite{W88}}]
Let $A$ be an arbitrary finite dimensional algebra.
An $A$-module $W$ is called a \emph{Wakamatsu tilting module} if it satisfies the following conditions.
\begin{itemize}
\item[(1)] $\Ext_{A}^{i}(W, W)=0$ for each $i \ge 1$.
\item[(2)] There exists an exact sequence
\begin{align}
0 \to A \xrightarrow{f^{0}} W^{0} \xrightarrow{f^{1}} W^{1} \xrightarrow{f^{2}} W^{2} \xrightarrow{f^{3}} \cdots \notag
\end{align}
such that $W^{i} \in \add W$ and $\Ext_{A}^{j}(\cok f^{i},W)=0$ for all $i \ge 0$ and $j\ge 1$.
\end{itemize}
\end{definition}

Dually, we can define Wakamatsu cotilting modules.
However, it is shown in \cite[Proposition 2.2]{BS98} that a Wakamatsu tilting module is left-right symmetric, that is, a module is a Wakamatsu titling module if and only if it is a Wakamatsu cotilting module.
Thus Wakamatsu tilting modules are a common generalization of tilting modules and cotilting modules.

For an $A$-module $X$, let $\pd X$ denote the projective dimension of $X$ and $\id X$ the injective dimension of $X$.

\begin{lemma}\label{lem_wak}
Let $A$ be an arbitrary finite dimensional algebra and let $(\Theta, \mathbb{P}, \mathbb{I})$ be a mixed bistratifying system in $\mod A$. Then the following statements hold.
\begin{itemize}
\item[(1)] If $\mathbb{P}\cong A$, then $\mathbb{I}$ is a Wakamatsu tilting $A$-module. If in addition $\pd \Theta<\infty$, then $\mathbb{I}$ is a tilting $A$-module.
\item[(2)] If $\mathbb{I}\cong DA$, then $\mathbb{P}$ is a Wakamatsu cotilting $A$-module. If in addition $\id \Theta<\infty$, then $\mathbb{P}$ is a cotilting $A$-module.
\end{itemize}
\end{lemma}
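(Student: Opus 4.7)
The plan is to prove (1) directly and to handle (2) by the dual argument, invoking the left-right symmetry of Wakamatsu tilting modules from \cite[Proposition 2.2]{BS98} when needed. Throughout, the main inputs will be Proposition \ref{prop_mss} and its dual: in (1), the hypothesis $\mathbb{P}\cong A$ makes $A$ a projective generator of $\mathcal{F}(\Theta)$ with $\add A={}^{\perp_{\Ext_{A}^{1}}}\mathcal{F}(\Theta)\cap\mathcal{F}(\Theta)$, while $\mathbb{I}$ is always an injective cogenerator of $\mathcal{F}(\Theta)$ with $\add\mathbb{I}=\mathcal{F}(\Theta)^{\perp_{\Ext_{A}^{1}}}\cap\mathcal{F}(\Theta)$.

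For the Ext-vanishing $\Ext_{A}^{i}(\mathbb{I},\mathbb{I})=0$ ($i\ge 1$), I will iterate the projective-generator property on $\mathbb{I}\in\mathcal{F}(\Theta)$ to obtain short exact sequences $0\to K_{j+1}\to P_{j}\to K_{j}\to 0$ with $K_{j}\in\mathcal{F}(\Theta)$ and $P_{j}\in\add A$, and splice them into a genuine projective resolution of $\mathbb{I}$ in $\mod A$ whose syzygies $\Omega^{i}\mathbb{I}$ all remain in $\mathcal{F}(\Theta)$. Since $\mathbb{I}\in\mathcal{F}(\Theta)^{\perp_{\Ext_{A}^{1}}}$, the dimension shift $\Ext_{A}^{i+1}(\mathbb{I},\mathbb{I})\cong\Ext_{A}^{1}(\Omega^{i}\mathbb{I},\mathbb{I})$ vanishes for every $i\ge 0$.

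For the Wakamatsu coresolution, I will start from $A^{0}:=A\in\mathcal{F}(\Theta)$ and successively use the injective-cogenerator property to build short exact sequences $0\to A^{i}\to I^{i}\to A^{i+1}\to 0$ with $I^{i}\in\add\mathbb{I}$ and $A^{i+1}\in\mathcal{F}(\Theta)$, splicing them into $0\to A\xrightarrow{f^{0}}I^{0}\xrightarrow{f^{1}}I^{1}\to\cdots$ with $\cok f^{i}=A^{i+1}\in\mathcal{F}(\Theta)$. The required vanishing $\Ext_{A}^{j}(A^{i+1},\mathbb{I})=0$ for $j\ge 1$ then follows by the same dimension-shift trick, using projective resolutions of $A^{i+1}$ whose syzygies stay in $\mathcal{F}(\Theta)$ together with $\mathbb{I}\in\mathcal{F}(\Theta)^{\perp_{\Ext_{A}^{1}}}$. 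This establishes that $\mathbb{I}$ is Wakamatsu tilting. For the tilting addendum, extension-closedness of $\mathcal{F}(\Theta)$ and $\mathbb{I}\in\mathcal{F}(\Theta)$ force $\pd\mathbb{I}<\infty$ once $\pd\Theta<\infty$, and a Wakamatsu tilting module of finite projective dimension is tilting by \cite{MR04}.

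Part (2) is entirely dual: here $\mathbb{I}\cong DA$ is the injective cogenerator of $\mod A$, so injective coresolutions of objects of $\mathcal{F}(\Theta)$ by $\mathbb{I}$ are automatically honest injective coresolutions in $\mod A$. I will mirror the above, computing $\Ext_{A}^{i}(\mathbb{P},\mathbb{P})$ via injective coresolutions of $\mathbb{P}$ whose cosyzygies lie in $\mathcal{F}(\Theta)$, and constructing $\cdots\to P_{1}\to P_{0}\to DA\to 0$ with $P_{i}\in\add\mathbb{P}$ and kernels in $\mathcal{F}(\Theta)$ by iterating the projective-generator property on $\mathbb{I}\in\mathcal{F}(\Theta)$; the $\Ext$-vanishings follow exactly as in (1), showing that $\mathbb{P}$ is Wakamatsu cotilting (equivalently, Wakamatsu tilting by \cite[Proposition 2.2]{BS98}), and $\id\Theta<\infty$ upgrades this to cotilting. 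The only real subtlety, which is purely a bookkeeping matter, is keeping all successive cokernels/kernels and syzygies/cosyzygies inside $\mathcal{F}(\Theta)$ so that each $\Ext^{1}$-vanishing cascades to all higher $\Ext^{j}$; this is guaranteed by Proposition \ref{prop_mss}(2) and its dual together with the extension-closedness of $\mathcal{F}(\Theta)$.
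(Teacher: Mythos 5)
Your argument for the Wakamatsu part is correct and is essentially the paper's own proof: building projective resolutions of objects of $\mathcal{F}(\Theta)$ with syzygies staying in $\mathcal{F}(\Theta)$ via the projective generator $\mathbb{P}\cong A$, and then dimension shifting against $\mathbb{I}\in\mathcal{F}(\Theta)^{\perp_{\Ext_{A}^{1}}}$, is exactly the mechanism the paper packages through Proposition \ref{prop_resol} (resolving subcategory) and the identity \eqref{seq_resol_high}; the coresolution of $A$ by $\add\mathbb{I}$ with cokernels in $\mathcal{F}(\Theta)$ is likewise the same construction, and treating (2) dually matches the paper.

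The tilting (and dually cotilting) addendum, however, has a genuine gap. The statement you invoke --- that a Wakamatsu tilting module of finite projective dimension is tilting --- is not a theorem of \cite{MR04}: it is precisely the (open) Wakamatsu tilting conjecture, established there only in special cases. Knowing $\pd\mathbb{I}<\infty$ does not by itself allow you to truncate the infinite $\add\mathbb{I}$-coresolution of $A$, which is what the definition of a tilting module demands. The paper closes this using the extra structure present here: with $l:=\pd\Theta$, every $X\in\mathcal{F}(\Theta)$ satisfies $\pd X\le l$, in particular all cokernels $\cok f^{i}$ of the coresolution $0\to A\to I^{0}\to I^{1}\to\cdots$ do; dimension shifting along this coresolution yields $\Ext_{A}^{1}(\cok f^{l},\cok f^{l-1})\cong\Ext_{A}^{l+1}(\cok f^{l},A)=0$, so the sequence $0\to\cok f^{l-1}\to I^{l}\to\cok f^{l}\to 0$ splits, $\cok f^{l-1}\in\add\mathbb{I}$, and one obtains a finite $\add\mathbb{I}$-coresolution of $A$; combined with $\pd\mathbb{I}\le l$ and the self-orthogonality already proved, this shows $\mathbb{I}$ is tilting in the sense of \cite{M86}. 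Your construction does keep the cokernels inside $\mathcal{F}(\Theta)$, so this repair is available to you, but as written the final step rests on an unproven conjecture rather than an argument; the same issue affects the cotilting claim in (2).
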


\begin{proof}
We only prove (1) since the proof of (2) is similar.
Assume $\mathbb{P}\cong A$.
By Proposition \ref{prop_resol}, $\mathcal{F}(\Theta)$ is a resolving subcategory of $\mod A$.
Then we have
\begin{align}\label{seq_resol_high}
\mathcal{F}(\Theta)^{\perp_{\Ext_{A}^{1}}}=\{\textnormal{$M\in \mod A$ $\mid$ $\Ext_{A}^{j}(\mathcal{F}(\Theta),M)=0$ for all $j\geq 1$}\}.
\end{align}
For the convenience of the readers, we show \eqref{seq_resol_high}, although it is a well-known result.  
Let $M\in \mathcal{F}(\Theta)^{\perp_{\Ext_{A}^{1}}}$ and $X\in\mathcal{F}(\Theta)$.
It is enough to claim $\Ext_{A}^{j}(X,M)=0$ for all $j\ge 2$.
Take a minimal projective resolution of $X$
\begin{align}\label{seq_proj-resol}
\cdots \rightarrow P_{1}\xrightarrow{\rho_{1}} P_{0}\xrightarrow{\rho_{0}}X\to 0.
\end{align}
Since $\mathcal{F}(\Theta)$ is a resolving subcategory, the kernel $\ker \rho_{i}$ is in $\mathcal{F}(\Theta)$ for all $i\ge 0$.
Applying $\Hom_{A}(-,M)$ to \eqref{seq_proj-resol} induces an isomorphism $\Ext_{A}^{j}(X,M)\cong \Ext_{A}^{1}(\ker \rho_{j-2},M)$ for each $j\ge 2$.
By $\ker \rho_{j-2}\in\mathcal{F}(\Theta)$ and $M\in \mathcal{F}(\Theta)^{\perp_{\Ext_{A}^{1}}}$, we have $\Ext_{A}^{j}(X,M)=0$.
Next we show that $\mathbb{I}$ is a Wakamatsu tilting module.
By the dual statement of Proposition \ref{prop_mss}(2), $\mathbb{I}$ is an injective cogenerator of $\mathcal{F}(\Theta)$.
Thus we have $\Ext_{A}^{1}(\mathbb{I},\mathbb{I})=0$ and an exact sequence
\begin{align}\label{seq_add-resol}
0\to A\xrightarrow{f^{0}} I^{0}\xrightarrow{f^{1}} I^{1}\xrightarrow{f^{2}}\cdots
\end{align}
with $I^{i}\in \add \mathbb{I}$, $\cok f^{i}\in \mathcal{F}(\Theta)$ and $\Ext_{A}^{1}(\cok f^{i}, \mathbb{I})=0$ for each $i\ge 0$.
By \eqref{seq_resol_high}, we obtain $\Ext_{A}^{j}(\mathbb{I},\mathbb{I})=0$ and $\Ext_{A}^{j}(\cok f^{i}, \mathbb{I})=0$ for all $j\ge 1$.

In the following, we assume $l:=\pd\Theta<\infty$.
Then we can check $\pd X \le l$ for each $X\in \mathcal{F}(\Theta)$.
In particular, we have $\pd \mathbb{I}\le l$.
Applying $\Hom_{A}(\cok f^{l},-)$ to \eqref{seq_add-resol} induces an isomorphism $\Ext_{A}^{1}(\cok f^{l},\cok f^{l-1})\cong \Ext_{A}^{l+1}(\cok f^{l},A)$.
By $\pd \cok f^{l}\le l$, we have $\Ext_{A}^{1}(\cok f^{l},\cok f^{l-1})=0$.
Thus the exact sequence $0\to \cok f^{l-1}\to I^{l}\to \cok f^{l}\to 0$ splits.
This implies $\cok f^{l-1}\in \add \mathbb{I}$.
Hence $\mathbb{I}$ is a tilting $A$-module.
This finishes the proof.
\end{proof}

Under the assumption of Lemma \ref{lem_wak}, $\mathcal{F}(\Theta)$ is a resolving subcategory with an injective cogenerator.
Thus the statement in Lemma \ref{lem_wak} follows from \cite[Proposition 2.9]{MR04}.

Let $\mathcal{C}:=(\mathcal{C},\mathbb{E},\mathfrak{s},\mathbb{E}^{-1})$ be a $\Bbbk$-linear Hom-finite Krull--Schmidt extriangulated category with a negative first extension and let  $(\Theta, \mathbb{P}, \mathbb{I})$ be a mixed bistratifying system in $\mathcal{C}$.
We put $B:=\End_{\mathcal{C}}(\mathbb{P})$ and $C:=\End_{\mathcal{C}}(\mathbb{I})^{\op}$.
Let $\Psi:=\mathcal{C}(\mathbb{P}, -): \mathcal{C}\to \mod B$ and $\Phi:=\mathcal{C}(-, \mathbb{I}):\mathcal{C}\to\mod C$.
Since $(\Theta, \mathbb{I})$ is a mixed costratifying system in $\mathcal{C}$, a pair $(\Theta^{\op}, \mathbb{I}^{\op})$ is a mixed stratifying system in $\mathcal{C}^{\op}$ (see Remark \ref{rem_op}).
Let $E:=\End_{\mathcal{C}^{\op}}(\mathbb{I}^{\op})$ and $\Upsilon:=\mathcal{C}^{\op}(\mathbb{I}^{\op}, -)$.
Then we obtain $E\cong C$ and $\Upsilon(X^{\op})\cong \mathcal{C}(X, \mathbb{I})=\Phi(X)$ for each $X \in \mathcal{C}$.
Hence it follows from Lemma \ref{lem_equiv} that
$\mathcal{C}(M,N)\cong \Hom_{C}(\Phi(N),\Phi(M))$ and $\mathbb{E}(M,N)\cong \Ext_{C}^{1}(\Phi(N),\Phi(M))$ for each $M,N \in \mathcal{F}(\Theta)$.
The following theorem gives a framework of Ringel duality from the viewpoint of mixed bistratifying systems.

\begin{theorem}\label{thm_rdual}
Let $(\Theta, \mathbb{P}, \mathbb{I})$ be a mixed bistratifying system in $\mathcal{C}$.
Then the following statements hold.
\begin{itemize}
\item[(1)] $(\Phi(\Theta), \Phi(\mathbb{I}), \Phi(\mathbb{P}))$ is a mixed bistratifying system in $\mod C$.
Moreover, $\Phi(\mathbb{P})$ is a Wakamatsu titling $C$-module and $(\Phi'\Phi(\Theta), \Phi'\Phi(\mathbb{P}), \Phi'\Phi(\mathbb{I})) \cong (\Psi(\Theta), \Psi(\mathbb{P}), \Psi(\mathbb{I}))$ holds, where $\Phi':=\Hom_{C}(-,\Phi(\mathbb{P}))$.
In particular, $\End_{\mathcal{C}}(\mathbb{P}) \cong \End_{C}(\Phi(\mathbb{P}))$.
\item[(2)] $(\Psi(\Theta), \Psi(\mathbb{P}), \Psi(\mathbb{I}))$ is a mixed bistratifying system in $\mod B$.
Moreover, $\Psi(\mathbb{I})$ is a Wakamatsu titling $B$-module and $(\Psi'\Psi(\Theta),\Psi'\Psi(\mathbb{I}), \Psi'\Psi(\mathbb{P}))\cong (\Phi(\Theta),\Phi(\mathbb{I}), \Phi(\mathbb{P}))$ holds, where $\Psi':=\Hom_{B}(-,\Psi(\mathbb{I}))$.
In particular, $\End_{\mathcal{C}}(\mathbb{I}) \cong \End_{B}(\Psi(\mathbb{I}))$.
\end{itemize}
\end{theorem}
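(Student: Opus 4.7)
The plan is to prove part (2) directly by combining Theorem \ref{mainthm2} with Lemmas \ref{lem_equiv} and \ref{lem_wak}, and then to deduce part (1) from part (2) via the opposite-category identification already fixed in the excerpt.

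For part (2), I would first apply Theorem \ref{mainthm2} to the mixed stratifying system $(\Theta,\mathbb{P})$; this yields at once that $(\Psi(\Theta),\Psi(\mathbb{P}))$ is a mixed stratifying system in $\mod B$, that the restriction $\Psi:\mathcal{F}(\Theta)\to\mathcal{F}(\Psi(\Theta))$ is an equivalence, and that $\Psi(\mathbb{P})\cong B$ as ordered sets. To produce the costratifying half I would apply $\Psi$ to the $\mathfrak{s}$-conflations $\Theta(i)\to I(i)\to C(i)\dashrightarrow$ from (MCS3); since $\mathbb{P}\in{}^{\perp_{\mathbb{E}}}\mathcal{F}(\Theta)$ and (MSS4) holds for $\Theta$, these yield short exact sequences $0\to\Psi(\Theta(i))\to\Psi(I(i))\to\Psi(C(i))\to 0$ in $\mod B$, and the property $I(i)\in\mathcal{F}(\Theta)^{\perp_{\mathbb{E}}}$ transports through the equivalence and Lemma \ref{lem_equiv}(2) to $\Psi(I(i))\in\mathcal{F}(\Psi(\Theta))^{\perp_{\Ext_{B}^{1}}}$. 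Hence $(\Psi(\Theta),\Psi(\mathbb{P}),\Psi(\mathbb{I}))$ is a mixed bistratifying system in $\mod B$, and since $\Psi(\mathbb{P})\cong B$, Lemma \ref{lem_wak}(1) immediately gives that $\Psi(\mathbb{I})$ is a Wakamatsu tilting $B$-module.

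For the recovery identity I would use that every component of $\Theta,\mathbb{I},\mathbb{P}$ lies in $\mathcal{F}(\Theta)$ (the inclusion $I(i)\in\mathcal{F}(\Theta)$ coming from the $\mathfrak{s}$-conflation in (MCS3) with $C(i)\in\mathcal{F}(\Theta(\le i))$). The full faithfulness of $\Psi$ on $\mathcal{F}(\Theta)$ recorded in Lemma \ref{lem_equiv}(1) then yields functorial isomorphisms
\begin{align}
\Psi'\Psi(X)=\Hom_{B}(\Psi(X),\Psi(\mathbb{I}))\cong \mathcal{C}(X,\mathbb{I})=\Phi(X) \notag
\end{align}
componentwise; since $\Psi$ is covariant and $\Psi'$ contravariant, the orderings align to give $(\Psi'\Psi(\Theta),\Psi'\Psi(\mathbb{I}),\Psi'\Psi(\mathbb{P}))\cong(\Phi(\Theta),\Phi(\mathbb{I}),\Phi(\mathbb{P}))$, and the case $X=\mathbb{I}$ delivers $\End_{\mathcal{C}}(\mathbb{I})\cong\End_{B}(\Psi(\mathbb{I}))$. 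Part (1) would then follow by passing to $\mathcal{C}^{\op}$: by Remark \ref{rem_op} the triple $(\Theta^{\op},\mathbb{I}^{\op},\mathbb{P}^{\op})$ is a mixed bistratifying system in $\mathcal{C}^{\op}$, the functor $\Upsilon=\mathcal{C}^{\op}(\mathbb{I}^{\op},-)$ matches $\Phi$ under the identifications $\Upsilon(X^{\op})\cong\Phi(X)$ and $\End_{\mathcal{C}^{\op}}(\mathbb{I}^{\op})=C$ already fixed in the excerpt, and applying part (2) in $\mathcal{C}^{\op}$ to this system reproduces statement (1) verbatim.

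The main subtlety will be the bookkeeping around the opposite-category identification---verifying that the reversed ordering on $\Theta^{\op}$, the swap of stratifying and costratifying roles between $\mathbb{P}^{\op}$ and $\mathbb{I}^{\op}$, and the left/right module conventions on $C=\End_{\mathcal{C}}(\mathbb{I})^{\op}$ all line up so that the instance of part (2) in $\mathcal{C}^{\op}$ genuinely reproduces part (1) without indexing errors. Once this is organised, every remaining verification should collapse to a direct application of Theorem \ref{mainthm2}, Lemma \ref{lem_equiv}(1)--(2), or Lemma \ref{lem_wak}(1).
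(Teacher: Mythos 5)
Your proposal matches the paper's proof in all essentials: part (2) is obtained by invoking Theorem \ref{mainthm2} for the stratifying half, applying $\Psi$ to the (MCS3)-conflations (with exactness ensured exactly by $\mathbb{E}(\mathbb{P},\Theta(i))=0$ and the vanishing of $\mathbb{E}^{-1}$ coming from (MSS4)), transporting injectivity via Lemma \ref{lem_equiv}(2), deducing Wakamatsu tilting from $\Psi(\mathbb{P})\cong B$ and Lemma \ref{lem_wak}(1), and recovering $\Psi'\Psi\cong\Phi$ on $\mathcal{F}(\Theta)$ by full faithfulness; part (1) is then deduced in $\mathcal{C}^{\op}$ via Remark \ref{rem_op} and the identifications $E\cong C$, $\Upsilon(X^{\op})\cong\Phi(X)$, exactly as in the paper. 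The argument is correct and there is nothing substantive to add.
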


\begin{proof}
(2) We show that $(\Psi(\Theta), \Psi(\mathbb{P}), \Psi(\mathbb{I}))$ is a mixed bistratifying system in $\mod B$.
By Theorem \ref{mainthm2}, it is enough to claim that $(\Psi(\Theta), \Psi(\mathbb{I}))$ is a mixed costratifying system.
Since (MCS1), (MCS2) and (MCS4) clearly hold, we show (MCS3).
By (MCS3) for $(\Theta, \mathbb{I})$, there exists an $\mathfrak{s}$-conflation $\Theta(i) \to I(i) \to C(i) \dashrightarrow$ such that $I(i) \in \add \mathbb{I}$ and $C(i) \in \mathcal{F}(\Theta(\le i))$.
We put $\Lambda(i):=\Psi(\Theta(i))$.
Since $\mathbb{E}^{-1}(\mathbb{P}, C(i))=0=\mathbb{E}(\mathbb{P}, \Theta(i))$, applying $\Psi$ to the $\mathfrak{s}$-conflation above induces an exact sequence
\begin{align}
 0 \to \Lambda(i) \to \Psi(I(i)) \to \Psi(C(i)) \to 0 \notag
\end{align}
with $\Psi(C(i)) \in \mathcal{F}(\Lambda(\le i))$.
By Lemma \ref{lem_equiv}(2), we have $\Psi(I(i))\in \mathcal{F}(\Lambda)^{\perp_{\Ext_{B}^{1}}}$.
Hence (MCS3) holds.
Thus $(\Psi(\Theta),\Psi(\mathbb{P}),\Psi(\mathbb{I}))$ is a mixed bistratifying system in $\mod B$.
Since $\Psi(\mathbb{P})\cong B$ holds, the module $\Psi(\mathbb{I})$ is a Wakamatsu tilting $B$-module by Lemma \ref{lem_wak}(1).
It follows from Lemma \ref{lem_equiv}(1) that  $\Psi'\Psi(X)=\Hom_{B}(\Psi(X),\Psi(\mathbb{I}))\cong \mathcal{C}(X,\mathbb{I})=\Phi(X)$ for each $X \in\mathcal{F}(\Theta)$.
Hence the assertion holds.

(1) Since $(\Theta^{\op},\mathbb{I}^{\op},\mathbb{P}^{\op})$ is a mixed bistratifying system in $\mathcal{C}^{\op}$, it follows from (2) that $(\Upsilon(\Theta^{\op}),\Upsilon(\mathbb{I}^{\op}),\Upsilon(\mathbb{P}^{\op}))$ is a mixed bistratifying system in $\mod C$, where $\Upsilon:=\mathcal{C}^{\op}(\mathbb{I}^{\op},-)$.
Moreover, $\Upsilon(\mathbb{P}^{\op})$ is a Wakamatsu tilting $E$-module, where $E:=\End_{\mathcal{C}^{\op}}(\mathbb{I}^{\op})$.
By $E\cong C$ and $\Upsilon(X^{\op})\cong\Phi(X)$ for each $X \in\mathcal{C}$, we have the assertion.
\end{proof}

Let $A$ be a basic finite dimensional $\Bbbk$-algebra.
Fix a complete ordered set $(e_{1},e_{2},\ldots, e_{n})$ of primitive orthogonal idempotents of $A$.
We regard $A$ as an ordered set $(e_{1}A, e_{2}A, \ldots, e_{n}A)$.
It is shown in Theorem \ref{thm_mixed-bistr} that, if $A$ is a mixed stratified algebra with respect to $\Theta$, then we have a mixed bistratifying system $(\Theta, A, \mathbb{T})$ in $\mod A$. 
Applying Theorem \ref{thm_rdual} to $(\Theta, A, \mathbb{T})$, we obtain the following result.

\begin{corollary}\cite[Section 4]{BS}\label{cor_bs}
Let $A$ be a mixed stratified algebra with respect to $\Theta$.
Then there exists an $A$-module $\mathbb{T}$ satisfying the following properties. 
\begin{itemize}
\item[(1)] $\mathbb{T}$ is a Wakamatsu tilting $A$-module.
\item[(2)] Let $\Phi:=\Hom_{A}(-,\mathbb{T})$. Then $C:=\End_{A}(\mathbb{T})^{\op}$ is a mixed stratified algebra with respect to $\Phi(\Theta)$ and $\Phi(A)$ is a Wakamatsu tilting $C$-module. 
Moreover, $A\cong\End_{C}(\Phi(A))$ holds. 
\item[(3)] If $\Theta=\Delta$ (i.e., $A$ is a standarly stratified algebra), then $\mathbb{T}$ is a tilting module and $C$ is a standardly stratified algebra.
\end{itemize}
\end{corollary}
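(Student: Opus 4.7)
The plan is to derive the corollary as a direct application of Theorem \ref{thm_rdual} to a mixed bistratifying system supplied by Theorem \ref{thm_mixed-bistr}. I view $\mathcal{C}=\mod A$ as the extriangulated category with $\mathbb{E}^{-1}=0$ from Example \ref{ex_neg}(2), so the hypotheses of Theorem \ref{thm_rdual} are met. Since $A$ is mixed stratified with respect to $\Theta$, Theorem \ref{thm_mixed-bistr} produces $\mathbb{T}=(T(1),\ldots,T(n))$ making $(\Theta,A,\mathbb{T})$ into a mixed bistratifying system in $\mod A$; this $\mathbb{T}$ is the module claimed in the corollary.

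For part (1), I apply Theorem \ref{thm_rdual}(2) with $\mathbb{P}=A$ and $\mathbb{I}=\mathbb{T}$. Here $B=\End_A(A)\cong A$ via evaluation at $1$, and $\Psi=\Hom_A(A,-)$ is naturally isomorphic to the identity on $\mod A$. Consequently $\Psi(\mathbb{T})\cong\mathbb{T}$, and Theorem \ref{thm_rdual}(2) immediately asserts that this module is Wakamatsu tilting over $B\cong A$.

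For part (2), I apply Theorem \ref{thm_rdual}(1) to the same data, so that $\Phi=\Hom_A(-,\mathbb{T})$ and $C=\End_A(\mathbb{T})^{\op}$. The theorem gives that $(\Phi(\Theta),\Phi(\mathbb{T}),\Phi(A))$ is a mixed bistratifying system in $\mod C$ whose projective generator is $\Phi(\mathbb{T})\cong C$; Theorem \ref{thm_mixed-bistr} then identifies $C$ as a mixed stratified algebra with respect to $\Phi(\Theta)$. The Wakamatsu tilting property of $\Phi(A)=\Phi(\mathbb{P})$ and the isomorphism $A\cong\End_A(A)\cong\End_C(\Phi(A))$ are read off directly from the same theorem.

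For part (3), assume $\Theta=\Delta$. A standard induction using the filtration of $A$ by standard modules (starting from $\Delta(n)=e_nA$, which is projective) yields $\pd\Delta(i)<\infty$ for every $i$, so $\pd\Theta<\infty$; Lemma \ref{lem_wak}(1) then upgrades $\mathbb{T}$ from Wakamatsu tilting to tilting. For the stratification of $C$, each $\Delta(i)$ is a stone by Proposition \ref{prop_mixstd}, and the contravariant equivalence $\Phi\colon\mathcal{F}(\Theta)\to\mathcal{F}(\Phi(\Theta))$ --- the opposite-category analog of Lemma \ref{lem_equiv} invoked inside the proof of Theorem \ref{thm_rdual}(1) --- carries stones to stones, so each $\Phi(\Delta(i))$ is a stone in $\mod C$. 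Applying Lemma \ref{lem_pstandard}(1) to the mixed stratifying system $(\Phi(\Theta),C)$ then identifies each $\Phi(\Delta(i))$ with a standard $C$-module, whence $C$ is standardly stratified. The argument is essentially bookkeeping on top of Theorems \ref{thm_mixed-bistr} and \ref{thm_rdual}; the only non-automatic external input is the classical fact that standard modules of a standardly stratified algebra have finite projective dimension, which intervenes only in part (3).
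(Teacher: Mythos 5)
Your proposal is correct and follows essentially the same route as the paper: obtain the mixed bistratifying system $(\Theta,A,\mathbb{T})$ from Theorem \ref{thm_mixed-bistr}, deduce (1) and (2) from Theorem \ref{thm_rdual} (your detour through Theorem \ref{thm_rdual}(2) for part (1) is the same argument as the paper's direct use of Lemma \ref{lem_wak}(1), since that theorem is proved via that lemma), and for (3) transfer the stone property through Lemma \ref{lem_equiv} and conclude with Lemma \ref{lem_pstandard}(1). The only cosmetic difference is that for $\pd\Delta<\infty$ the paper cites \cite[Proposition 1.8]{AHLU00}, whereas you sketch the standard downward induction along the filtration $0\to K(i)\to e_iA\to\Delta(i)\to0$ with $K(i)\in\mathcal{F}(\Delta(>i))$, which is equally valid.
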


\begin{proof}
(1) By Theorem \ref{thm_mixed-bistr}, we obtain a mixed bistratifying system $(\Theta, A, \mathbb{T})$ in $\mod A$. 
Hence Lemma \ref{lem_wak}(1) implies that $\mathbb{T}$ is a Wakamatsu tilting module.

(2) By Theorem \ref{thm_rdual}(1), $(\Phi(\Theta), \Phi(\mathbb{T}), \Phi(A))$ is a mixed bistratifying system in $\mod C$, $\Phi(A)$ is a Wakamatsu tilting $C$-module and $A\cong\End_{C}(\Phi(A))$ holds.  
Since $\Phi(\mathbb{T})\cong C$ as $C$-modules, it follows from Theorem \ref{thm_mixed-bistr} that $C$ is a mixed stratified algebra with respect to $\Phi(\Theta)$. 

(3) Let $\Theta=\Delta$.
By \cite[Proposition 1.8]{AHLU00}, we have $\pd \Delta <\infty$.
Thus it follows from Lemma \ref{lem_wak}(1) that $\mathbb{T}$ is tilting. 
Since $(\Phi(\Theta),C)$ is a mixed stratifying system by the statement (2) and Theorem \ref{thm_mixed-bistr}, it follows from Lemma \ref{lem_equiv} that $\Phi(\Theta(i))$ is a stone for each $i\in [1,n]$.
Hence the assertion follows from Lemma \ref{lem_pstandard}(1).
This completes the proof.
\end{proof}

We give an example of Ringel duality of mixed stratified algebras.

\begin{example}
Assume that $\Bbbk$ is an algebraically closed field.
Let $A$ be the $\Bbbk$-algebra defined by the quiver
\begin{align}
\xymatrix@=15pt{1\ar@(ur,ul)_-{x} \ar@(dr,dl)^-{y} & 2\ar[l]^-{a}\ar@<0.5ex>[r]^-{b} &3 \ar@<0.5ex>[l]^-{c}
}\notag
\end{align}
with relations $bcb, ax, ay, x^{2}, xy, yx, y^{2}$ and $ca$.
Let $\Theta(1)=S(1), \Theta(2)=\cok (e_{3}A \xrightarrow{b \cdot -} e_{2}A)$ and $\Theta(3)=e_{3}A/\rad^{2}e_{3}A$.
Let $T(1)= \ker (\mathsf{D}(Ae_{1}) \xrightarrow{f \neq 0} S(2)), T(2)=\mathsf{D}(Ae_{1})$ and $T(3)=\mathsf{D}(Ae_{2})$, where $\mathsf{D}:=\Hom_{\Bbbk}(-, \Bbbk)$.
Then $(\Theta, A, \mathbb{T})$ is a mixed bistratifying system.
In particular, $\mathbb{T}$ is a Wakamatsu tilting module which is not a tilting module by $\pd \mathbb{T} =\infty$.
Let $\Phi:=\Hom_{A}(-, \mathbb{T})$.
Then $C:=\End_{A}(\mathbb{T})^{\op}$ is isomorphic to the $\Bbbk$-algebra defined by the quiver
\begin{align}
\xymatrix{1\ar@(ul,dl)_-{\alpha}& 2\ar[l]_-{\beta} \ar@/^15pt/[r]^-{\gamma} \ar@/^-15pt/[r]_-{\varphi}&3 \ar[l]_-{\delta}
}\notag
\end{align}
with relations $\alpha^{2}$, $\beta\alpha$, $\delta\beta$, $\gamma\delta\gamma$ and $\varphi\delta\varphi$.
By Corollary \ref{cor_bs}(2), it is a mixed stratified algebra with respect to $\Phi(\Theta)$ and $\End_{C}(\Phi(A)) \cong A$ holds.
\end{example}

\subsection*{Acknowledgements}
We would like to thank Professor Julian K\"ulshammer for informing us about the paper \cite{BS}.

\end{document}